\newtheorem{thm}{Theorem}[section]
\newtheorem{lemma}[thm]{Lemma}
\newtheorem{coro}[thm]{Corollary}
\newtheorem{conjecture}[thm]{Conjecture}
\newtheorem{remark}[thm]{Remark}
\newtheorem{definition}[thm]{Definition}
\newtheorem{thm*}{Theorem}[section]
\numberwithin{equation}{section}
\newcommand{\pr}{\partial}
\newcommand{\veps}{\varepsilon}
\def\dv{\textnormal{div}}
\def\M{\mathscr{M}}
\def\grad{\nabla}
\def\R{\mathbb{R}}
\def\area{\textmd{area}}
\def\vol{\mathrm{vol}}
\def\R{\mathbb{R}}
\def\S{\Sigma}
\def\({\left(}
\def\){\right)}
\def\a{\alpha}
\def\={\stackrel{(n=2)}{=}}
\def\To{\longrightarrow}
\def\graph{\textnormal{graph}}
\def\D{\textnormal{D}}
\def\[{\,[\hspace{-.9ex}[\,}
\def\]{\,]\hspace{-.9ex}]\,}
\def\bM{\mathbf{M}}
\def\m{\mathbf{m}}
\def\V{\mathcal{V}}
\newcommand{\be}{\begin{equation}}
\newcommand{\ee}{\end{equation}}
\newcommand{\bee}{\begin{equation*}}
\newcommand{\eee}{\end{equation*}}
\DeclareMathOperator{\diam}{diam}
\DeclareMathOperator{\minA}{minA}
\newcommand{\GHto}{\stackrel { \textrm{GH}}{\longrightarrow} }
\newcommand{\Lipto}{\stackrel { \textrm{Lip}}{\longrightarrow} }
\newcommand{\rstr}{\:\mbox{\rule{0.1ex}{1.2ex}\rule{1.1ex}{0.1ex}}\:}
\newcommand{\mass}{{\mathbf M}}
\newcommand{\inj}{\textrm{inj}}
\newcommand{\bdry}{{\partial}}
\newcommand{\set}{\textrm{set}}
\def\G{\mathcal{G}}
\def\Mda{\mathcal M^{D_0}_{A_0}}
\def\M{\mathcal{M}}
\def\Md{\mathcal M^{D_0}}
\newcommand{\intcurr}{{\mathbf I}} 
\newcommand{\spt}{\operatorname{spt}}
\newcommand{\Lip}{\operatorname{Lip}}
\begin{document}

\title[Stability of graphical tori]{Stability of graphical tori with almost nonnegative scalar curvature}

\author[{Cabrera Pacheco}]{Armando J. {Cabrera Pacheco}}\thanks{AJCP is grateful to the Carl Zeiss Foundation for its generous support.}
\address[Armando J. {Cabrera Pacheco}]{Department of Mathematics, Universit\"at T\"ubingen, T\"ubingen, Germany.}
\email{cabrera@math.uni-tuebingen.de}
\author[Ketterer]{Christian Ketterer}\thanks{CK is funded by the Deutsche Forschungsgemeinschaft (DFG, German Research Foundation) -- Projektnummer 396662902, ``Synthetische Kr\"ummungsschranken durch Methoden des Optimal Transports''.}
\address[Christian Ketterer]{Department of Mathematics, University of Toronto, Canada.}
\email{ckettere@math.toronto.edu}
\author[Perales]{Raquel Perales}
\address[Raquel Perales]{Instituto de Matem\'aticas, Universidad Nacional Aut\'onoma de M\'exico, Oaxaca, Mexico}
\email{raquel.peralesaguilar@gmail.com}

\thanks{The authors were partially supported by NSF DMS-1309360 and NSF DMS-1612049. }

\maketitle

\begin{abstract}
By works of Schoen--Yau and Gromov--Lawson any Riemannian manifold with nonnegative scalar curvature and diffeomorphic to a torus is isometric to a flat torus. Gromov conjectured subconvergence of tori with respect to a weak Sobolev type metric when the scalar curvature goes to $0$. We prove flat and intrinsic flat subconvergence to a flat torus for noncollapsing sequences of $3$-dimensional tori $M_j$ that can be realized as graphs of certain functions defined over flat tori satisfying a uniform upper diameter bound and scalar curvature bounds of the form $R_{g_{M_j}} \geq -1/j$. We also show that the volume of the manifolds of the convergent subsequence converges to the volume of the limit space.  We do so adapting results of Huang--Lee, Huang--Lee--Sormani and  Allen--Perales--Sormani.  Furthermore, our results also hold when the condition on the scalar curvature of a torus $(M, g_M)$ is replaced by a bound on the quantity $-\int_T  \min\{R_{g_M},0\} d{\vol_{g_T}}$, where $M=\graph(f)$, $f: T \to \R$ and $(T,g_T)$ is a flat torus. Using arguments developed by Alaee, McCormick and the first named author after this work was completed, our results hold for dimensions $n \geq 4$ as well. 
\end{abstract}


\section{Introduction}\label{sec-intro}

The celebrated scalar torus rigidity theorem says that any Riemannian manifold that is diffeomorphic to an $n$ dimensional torus and has nonnegative scalar curvature must be isometric to a flat torus. This rigidity statement follows from the fact that an $n$-torus cannot carry a metric of positive scalar curvature. The results were proven for $n \leq 7$ using minimal surfaces theory by Schoen and Yau  \cite{Schoen-Yau-min-surf,Schoen-Yau-pos-scalar}, and by Gromov and Lawson using the Lichnerowicz formula for spin manifolds \cite{Gromov-Lawson-1980} for $n \geq 8$.  In \cite{Gromov-Dirac} Gromov addressed the corresponding stability problem.
\begin{conjecture}[Gromov, Section 5.4 \cite{Gromov-Dirac}]\label{conjGro}
"There is a particular 'Sobolev type weak metric' in the space of $n$-manifolds $X$, such that, for example, tori $(X,g)$ with $R_g\geq -\epsilon$, when properly normalized, (sub)converge to flat tori for $\epsilon\rightarrow 0$, but these $X$ may, in general, diverge in stronger metrics."
\end{conjecture}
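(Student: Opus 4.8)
Since Conjecture~\ref{conjGro} is open in general, the plan is to establish it for the noncollapsing graphical tori described in the abstract. Write each torus as $M_j=\graph(f_j)$ for an admissible function $f_j$ on a flat torus $(T,g_{T_j})$, the underlying smooth manifold being a fixed $3$-torus $T$; then $g_{M_j}=g_{T_j}+df_j\otimes df_j\ge g_{T_j}$, and we normalize $f_j$ to have zero average. The goal is to show that, along a subsequence, the flat structures $g_{T_j}$ converge to a flat metric $g_{T_\infty}$ and $f_j\to 0$ strongly enough that $g_{M_j}\to g_{T_\infty}$ in $L^p(T)$ with $\Vol(M_j)\to\Vol(T,g_{T_\infty})$, and then to feed this into the comparison theorems of Allen--Perales--Sormani. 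The starting point is a Lam-type divergence identity for the scalar curvature of a graphical hypersurface: on the base one has
\begin{equation*}
R_{g_{M_j}}\,\sqrt{1+|\nabla f_j|_{g_{T_j}}^2}=\dv_{g_{T_j}}(X_j)-Q_j,
\end{equation*}
where $X_j$ is built from $\nabla f_j$ and $\hess f_j$, and $Q_j\ge 0$ is a quadratic expression in $\hess f_j$ which, once $|\nabla f_j|$ is controlled, is coercive in $\hess f_j$. Since $T$ is closed the divergence integrates away, so $\int_T Q_j\,d\vol_{g_{T_j}}=-\int_T R_{g_{M_j}}\sqrt{1+|\nabla f_j|_{g_{T_j}}^2}\,d\vol_{g_{T_j}}$. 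Using $R_{g_{M_j}}\ge -1/j$ --- or, in the stronger version, the hypothesis on $-\int_T\min\{R_{g_{M_j}},0\}\,d\vol_{g_{T_j}}$ --- together with $\int_T\sqrt{1+|\nabla f_j|_{g_{T_j}}^2}\,d\vol_{g_{T_j}}=\Vol(M_j,g_{M_j})$ and the uniform volume bounds furnished by the diameter bound, the noncollapsing assumption and the admissibility conditions on $f_j$, this forces $\int_T Q_j\,d\vol_{g_{T_j}}\to 0$. (Setting $j=\infty$ with $R\ge0$ in this identity recovers the graphical case of the Schoen--Yau--Gromov--Lawson torus rigidity.)

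Next I would convert $\int_T Q_j\to 0$ into strong convergence of $f_j$. Noncollapsing together with the diameter bound keeps $|\nabla f_j|$ from blowing up, so the weights $(1+|\nabla f_j|^2)^{-1}$ appearing in $Q_j$ do not degenerate and we obtain $\hess f_j\to 0$ in $L^2(T,g_{T_j})$; since $f_j$ is globally defined on the torus its gradient has zero average, so the Poincar\'e inequality upgrades this to $\nabla f_j\to 0$ in $L^2$, whence $\sqrt{1+|\nabla f_j|^2}\to 1$ in $L^1$. In particular $\Vol(M_j)\to\Vol(T,g_{T_j})$ and $g_{M_j}-g_{T_j}=df_j\otimes df_j\to 0$ in $L^1$, and, after a bootstrap via Sobolev embedding, $g_{M_j}-g_{T_j}\to 0$ in the $L^p$ sense required below. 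Meanwhile the uniform diameter bound on the base tori, together with the noncollapsing assumption, confines the lattices defining $g_{T_j}$ to a compact family, so along a further subsequence $g_{T_j}\to g_{T_\infty}$ smoothly for some flat metric $g_{T_\infty}$. Combining, $g_{M_j}\to g_{T_\infty}$ in $L^p$ on the fixed manifold $T$, with $g_{M_j}\ge g_{T_j}$ and $\Vol(M_j)\to\Vol(T,g_{T_\infty})$.

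Finally, I would invoke the Allen--Perales--Sormani theorems comparing $L^p$-convergence of Riemannian metrics --- in the presence of a lower metric bound, a uniform diameter bound, and convergence of volumes --- with Sormani--Wenger intrinsic flat convergence, together with the companion results in the spirit of Huang--Lee and Huang--Lee--Sormani giving flat convergence of the associated integral currents, to conclude that $(M_j,g_{M_j})$ converges to $(T,g_{T_\infty})$ both in the flat and in the intrinsic flat sense; the asserted volume convergence is exactly the input obtained in the previous step. \emph{The main obstacle} is the term $Q_j$: in its naive form it involves the indefinite quantity $|\hess f_j|^2-(\Delta f_j)^2$, so the divergence identity by itself does not control $\hess f_j$, and the heart of the argument --- isolating the correct divergence so that the residual term is genuinely nonnegative and coercive in $\hess f_j$ --- is precisely where the admissibility conditions on $f_j$ and the noncollapsing/diameter bounds enter, the latter keeping the $(1+|\nabla f_j|^2)$ weights nondegenerate. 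A secondary difficulty is that $L^p$-closeness of metrics alone does not rule out intrinsic-flat cancellation or distance degeneration in the limit; this is exactly what the one-sided bound $g_{M_j}\ge g_{T_j}$ and the volume convergence are used for, via the Allen--Perales--Sormani machinery.
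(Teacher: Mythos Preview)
Your strategy has a genuine gap at its core. The identity you posit, $R_{g_{M_j}}\sqrt{1+|\nabla f_j|^2}=\dv(X_j)-Q_j$ with $Q_j\ge 0$ coercive in $\hess f_j$, does not exist in the form you need. Lam's formula gives the scalar curvature of the graph \emph{itself} as a pure divergence on the flat base,
\[
R(f)=\dv_{T}\!\left[\tfrac{1}{\sqrt{1+|Df|^2}}(f_{ii}f_j-f_{ij}f_i)\partial_j\right],
\]
so integrating over the closed torus yields only $\int_T R(f)\,d\vol_T=0$; no nonnegative residual term $Q_j$ falls out. The obvious candidate you mention, essentially $|A|^2-H^2=-R$, is indefinite, and you acknowledge this but offer no mechanism to repair it. The admissibility conditions in the paper (mean-convex, outer-minimizing level sets) do not produce such a pointwise divergence identity either. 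Your second unjustified step compounds this: noncollapsing plus a diameter bound do \emph{not} prevent $|\nabla f_j|$ from blowing up --- deep thin wells with large gradient are exactly the phenomenon one must accommodate --- so the weights $(1+|\nabla f_j|^2)^{-1}$ really can degenerate, and your route from $\int Q_j\to 0$ to $\hess f_j\to 0$ in $L^2$ breaks down even if $Q_j$ were available.

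The paper's argument is of an entirely different nature. It never attempts Hessian or gradient control. Instead it works with the \emph{level sets} of $f$: applying the divergence theorem to Lam's identity on the sublevel set $\Omega_h$ gives $\m(f)\ge\int_{\Sigma_h}\tfrac{|Df|^2}{1+|Df|^2}H_{\Sigma_h}$, and from this, via the first variation formula and the Minkowski inequality (which is where strict mean-convexity and the outer-minimizing hypothesis enter), one derives a differential inequality $\V'(h)\ge F(\V(h))$ for the area function $\V(h)=\mathcal H^2(\Sigma_h)$. An ODE comparison then bounds the height $|h_0|$ of the ``transition'' level in terms of $\m(f)$, and coarea plus the isoperimetric inequality convert this into the volume estimate $\vol(M)\to\vol(T)$. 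Only at that final stage does the paper meet your outline: with $g_M\ge g_T$, uniform diameter, and volume convergence in hand, one invokes Allen--Perales--Sormani, after passing to a subsequence so that the underlying flat tori converge. So the endgame you describe is correct, but the input (volume convergence) is obtained by level-set and isoperimetric methods, not by an $L^2$-Hessian estimate.
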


We note that Gromov showed that for any manifold $M$ endowed with $C^2$ Riemannian metrics $g_j$ satisfying $R_{g_j} \geq k$ which converge to a $C^2$ Riemannian metric $g_\infty$ in the local $C^0$ sense, then $R_{g_\infty} \geq k$, where $R_{g}$ denotes the scalar curvature of a Riemannian metric $g$ and $k: M \to \R$ is a  continuous function \cite{Gromov-Dirac}.  Then Bamler obtained this result using Ricci flow \cite{Bamler-16}.  On the other hand, Basilio and Sormani constructed sequences of $3$-dimensional tori with almost nonnegative scalar curvature with either no Gromov--Hausdorff limit or a non-smooth Gromov--Hausdorff limit \cite{Basilio-Sormani-1}. These examples have increasingly thin wells with positive scalar curvature, that disappear under intrinsic flat convergence, surrounded by an annular region that satisfies $R_{g_j} \ge -\frac{1}{j}$. In \cite{Sormani-scalar} Sormani made the following refined conjecture. 
\begin{conjecture}[Sormani \cite{Sormani-scalar}]\label{conjecture}
Let $M_j$ be a sequence of Riemannian manifolds diffeomorphic to a $3$-torus such that 
\begin{align*}
\vol(M_j) = V_0,\,\, \diam(M_j) \le D_0 \,\, \text{and} \,\, \minA(M_j) \ge A_0 > 0,
\end{align*}
where $\minA(M_j)=\inf\{\vol(\Sigma): \Sigma \mbox{ is a closed minimal surface in} M_j\}$. If the scalar curvature of $M_j$,  $R(M_j)$, satisfies $R(M_j) \ge -\frac{1}{j}$ then there is a subsequence $M_{j_k}$ converging in intrinsic flat sense to a flat torus $T$ and possibly $\vol(M_{j_k}) \to \vol(T)$. 
\end{conjecture}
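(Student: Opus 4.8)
The plan is to proceed in three stages: \textbf{(i)} extract a subsequential limit in the intrinsic flat topology; \textbf{(ii)} run an analytic mechanism showing that the scalar curvature defect $1/j\to 0$ forces the geometry of $M_j$ toward that of a flat torus; \textbf{(iii)} identify the limit as a flat $3$-torus and recover volume continuity. For stage (i), one invokes Wenger's compactness theorem for integral current spaces: the hypotheses $\vol(M_j)=V_0$ and $\diam(M_j)\le D_0$, together with a uniform lower bound on the filling volumes of the $M_j$ --- which is precisely where $\minA(M_j)\ge A_0>0$ must be used, since it is exactly what prevents the Basilio--Sormani ``thin well'' pathology (collapse and disappearance of regions along short minimal surfaces) --- produce a subsequence $M_{j_k}$ converging in the intrinsic flat sense to an integral current space $(X,d_X,T)$ with $\m(T)\le V_0$. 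Everything then reduces to proving that $(X,d_X)$ is a flat $3$-torus and that $\vol(M_{j_k})\to \m(T)=V_0$.

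For stage (ii) I would use the harmonic--map sweepout method of Stern (as developed for scalar curvature by Stern and by Bray--Kazaras--Khuri--Stern): fix a basis of $H^1(M_j;\mathbb Z)\cong\mathbb Z^3$ and let $u_j^{(a)}:M_j\to S^1$, $a=1,2,3$, be harmonic representatives, so that $du_j^{(a)}$ is the harmonic one-form with integral periods $\delta_{ab}$. Specializing Stern's integral identity to a $3$-torus, where a generic level surface $\Sigma_\theta^{(a)}=(u_j^{(a)})^{-1}(\theta)$ is a closed surface in a nontrivial class and hence has $\chi(\Sigma_\theta^{(a)})\le 0$, one obtains a bound of the shape
\begin{equation*}
\int_{M_j}\frac{|\hess u_j^{(a)}|^2}{|\grad u_j^{(a)}|}\,d\vol_{g_{M_j}}\;\le\; -\,c\int_{M_j}R_{g_{M_j}}\,|\grad u_j^{(a)}|\,d\vol_{g_{M_j}}\;\le\;\frac{c}{j}\int_{M_j}|\grad u_j^{(a)}|\,d\vol_{g_{M_j}},
\end{equation*}
where $c>0$ is universal and the last integral is controlled in terms of $V_0,D_0,A_0$ (the Dirichlet energy $\int_{M_j}|\grad u_j^{(a)}|^2$ is the squared $L^2$-norm of a harmonic one-form, which is bounded above and bounded away from $0$ once areas of minimal surfaces and the diameter are controlled, and then $\int|\grad u_j^{(a)}|\le V_0^{1/2}(\int|\grad u_j^{(a)}|^2)^{1/2}$). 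Hence $\int_{M_j}|\hess u_j^{(a)}|^2/|\grad u_j^{(a)}|\to 0$, so the level tori become asymptotically totally geodesic and, via the Gauss equation together with $R_{g_{M_j}}\ge -1/j$, asymptotically flat. Playing the three maps against one another, $\Phi_j=(u_j^{(1)},u_j^{(2)},u_j^{(3)}):M_j\to\T^3$ becomes, on its regular set, an almost Riemannian submersion onto a flat torus with almost totally geodesic fibres, which one uses to build almost-distance-preserving maps between $M_j$ and a fixed flat torus. (An alternative mechanism for (ii): apply the Schoen--Yau minimal-surface descent, using the stability inequality and Gauss--Bonnet on the area-minimizer in a nontrivial $H_2$-class to get $\int_\Sigma|\II|^2\le \tfrac{1}{3j}\area(\Sigma)$, then propagate along the resulting foliation.)

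For stage (iii), the almost-distance-preserving maps from (ii), combined with the non-collapsing bounds from (i), feed into an Allen--Perales--Sormani-type argument to promote the weak estimates to genuine intrinsic flat convergence to a flat torus $T$ with $\vol(M_{j_k})\to\vol(T)$, the volume continuity coming from the $L^p$-to-intrinsic-flat machinery applied to $\Phi_j$. \textbf{The main obstacle}, and the reason the conjecture is open, is twofold. First, there is no known synthetic notion of ``scalar curvature $\ge 0$'' that is stable under intrinsic flat limits, so one cannot simply pass the bound $R_{g_{M_j}}\ge -1/j$ to $(X,d_X)$ and invoke the Schoen--Yau/Gromov--Lawson rigidity theorem; the Stern (or minimal-surface) estimates must be pushed all the way to metric control, and the genuinely hard point is the behaviour near the critical sets $\{|\grad u_j^{(a)}|=0\}$ (respectively, the singular set of the foliation), where the controlled integrands degenerate and where the three sweepouts could in principle interact badly. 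Second, one must rule out independently that the intrinsic flat limit is lower-dimensional, has strictly smaller mass, or is not even a manifold; the bounds $\diam\le D_0$ and $\minA\ge A_0$ are expected to suffice --- as in the graphical cases treated in this paper and its predecessors --- but converting them into a uniform non-degeneracy estimate valid for arbitrary (non-graphical) sequences is exactly the step that has so far resisted a general argument.
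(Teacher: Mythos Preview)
This statement is labeled \emph{Conjecture} in the paper, and the paper does \emph{not} prove it; the paper proves only the special graphical cases (Theorems~1.3 and~1.4), where each $M_j$ is the graph of a function $f_j:T\to\R$ over a flat torus satisfying the mean-convexity and outer-minimizing conditions of Definition~\ref{def-famG}. So there is no ``paper's own proof'' to compare against for the general statement. You yourself acknowledge at the end that the conjecture is open and identify the obstacles; that is honest, but it means what you have written is a programme, not a proof.

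As a programme your outline is coherent and cites the right circle of ideas (Stern's harmonic-map identity, Schoen--Yau descent, Allen--Perales--Sormani), but the genuine gap is exactly the one you flag: the Stern-type inequality controls $\int|\hess u|^2/|\grad u|$, and converting this into uniform \emph{metric} control on $M_j$ --- in particular on the bad set where $|\grad u_j^{(a)}|$ is small or where the three sweepouts fail to be transverse --- is not known in general. Your claim that the Dirichlet energy of the harmonic one-forms is uniformly bounded above and below in terms of $V_0,D_0,A_0$ alone is also not justified; such bounds typically require additional geometric control that is not among the hypotheses. Without these steps, stages~(ii) and~(iii) do not close.

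For contrast, the paper's argument in the graphical subclass bypasses all of this: the scalar curvature of a graph is a divergence (Lam's formula), which via the divergence theorem yields the level-set inequality $\m(f)\ge\int_{\Sigma_h}\frac{|Df|^2}{1+|Df|^2}H_{\Sigma_h}$; combined with the Minkowski inequality this gives an ODE-type lower bound on $\V'(h)$ (Lemma~\ref{lem-diffineq}), from which one extracts a height cutoff $h_0$ with $|h_0|\lesssim\m(f)^{1/2}$ and explicit volume estimates (Corollary~\ref{cor:limsup}). Volume convergence then feeds directly into Theorem~\ref{thm:aps}. None of this machinery is available without the graph hypothesis, which is why the full conjecture remains open.
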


The flat distance $d_F$  between integral currents is a classical notion from Geometric Measure Theory introduced by Federer--Fleming. The intrinsic flat distance, $d_{\mathcal{F}}$, 
was introduced by Sormani and Wenger \cite{SorWen2} applying work of 
Ambrosio and Kirchheim \cite{AK}.  The intrinsic flat distance between two compact oriented Riemannian manifolds of the same dimension $M_i$ endowed with their canonical currents $[M_i]$ is defined as the infimum of  the flat distances  $d_F(\varphi_{1\sharp}[M_1], \varphi_{2\sharp}[M_2])$, where $\varphi_i: M_i \to Z$ are distance preserving embeddings into any complete metric space. See Section \ref{sec-back} for the precise definitions. In this way, a sequence of spheres with spikes that contain decreasingly amounts of volume converges to a sphere in intrinsic flat sense \cite{SorWen2}.  

We note that the volume function is lower semicontinuous with respect to the flat and intrinsic flat distances. Hence, the last sentence of Conjecture \ref{conjecture} is meaningful. The equality on the volume bound and the diameter bound in Conjecture \ref{conjecture} prevent collapsing and expanding. There are examples 
of Gromov--Hausdorff and intrinsic flat convergent sequences of Riemannian manifolds $M_j$ satisfying 
$\minA(M_j) \to 0$, in which the scalar curvature blows up to negative infinity  \cite{Basilio-Dodziuk-Sormani}.

We prove Conjecture \ref{conjecture} and Gromov's Conjecture \ref{conjGro}  for a special class of 3 dimensional graphical Riemannian tori. 
Let $M=\graph(f) \subset T\times \R $ with the induced Riemannian metric where $T$ is a 3 dimensional flat torus and $f:T\rightarrow \mathbb{R}$ a $C^4$ function.  Now let $\G$ be the set of functions that satisfy:
\begin{enumerate}
\item $\max_{T} f =(f \circ p) \vert_{\pr D} \equiv 0$, where $p: \R^3 \to T$ is a Riemannian covering map and $D$ the closure of a fundamental domain.
\item For almost every $h\in f(T)$ the level set $\Sigma_h= f^{-1}(h)$ is strictly mean convex with respect to $-\frac{Df}{|Df|}$, i.e., ${\bf H}_{\S_h} \cdot -\frac{D f}{|D f|} >0$, where  ${\bf H}_{\S_h}$ is the mean curvature vector of $\S_h$ in the hyperplane.
\footnote{The mean curvature convention is that spheres have positive mean curvature with respect to the inner pointing normal vector.}
\item For almost every $h\in f(T)$ the level set $\tilde \Sigma_h= (f \circ p)^{-1}(h)$ is outer minimizing.
\end{enumerate}
Here Condition (1) guarantees that each face of the fundamental domain $D$ is a minimal surface in $M$.

We then define $\mathcal{M}$ as the class of graphical tori that arise from all functions in $\mathcal G$.
We let $\Mda$ be the class of manifolds $M \in \M$ that satisfy,
$$
\diam (M)\leq D_0 \,\, \&\,\, \minA(M) \geq A_0.
$$
Finally, we let $\mathcal M^{D_0}$ be the class of manifolds $M \in \M$  that only satisfy
$\diam (M)\leq D_0.$

First we prove subconvergence in the Federer--Fleming flat sense.   Fix a flat torus $T$ and for any $h \in \R$  let $T  \times \{h\}  \subset T\times \R$ have the orientation induced from the orientation of $T \times \R$.

\begin{thm}\label{thm-Flat2}
Let $M_i \in \mathcal M^{D_0}$  such that $M_i$ arises from $f_i: T \to \R$, $i\in \mathbb N$, and $T$ is fixed.  
Assume that 
\begin{align}
R(M_i) \to 0.
\end{align}
Then, there is a subsequence of $\{M_i\}$ denoted in the same way converging in flat sense in $T\times \mathbb{R}$ to $T\times \{0\}$
and
\be
\vol(M_i)  \to  \vol (T). 
\ee
\end{thm}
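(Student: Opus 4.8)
The plan is to reduce the theorem to a single quantitative statement — that for $M\in\M$ the area excess $\vol(M)-\vol(T)$ is controlled by the negative part of $R_{g_M}$ — and then to convert the resulting smallness of $f_i$ into flat and volume convergence by soft arguments. Two preliminary observations: writing $h_i:=\sqrt{1+|Df_i|^2}$ one always has $\vol(M_i)=\int_T h_i\,d\vol_{g_T}\ge\vol(T)$; and since $T\times\{0\}\subset M_i$ by Condition~(1) while $\diam(M_i)\le D_0$, each point $(x,f_i(x))$ of $M_i$ lies within intrinsic distance $D_0$ of the zero section and the gap of the last coordinate is at most that distance, so $\|f_i\|_{L^\infty(T)}\le D_0$; finally $f_i\le 0$ with $f_i=0$ on $p(\partial D)$.

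The crux is the following. Invoke Lam's divergence identity for the scalar curvature of a graph, $R_{g_{M_i}}\,h_i=\dv_{g_T}X_i$, where $X_i=X_i(Df_i,D^2 f_i)$ is an explicit vector field (locally, $X_i^a=(1+|Df_i|^2)^{-1}(\partial_a f_i\,\Delta f_i-\sum_b\partial_b f_i\,\partial_a\partial_b f_i)$); being a local expression in $f_i$ it descends to the closed torus $T$, so $\int_T R_{g_{M_i}}h_i\,d\vol_{g_T}=0$. This identity carries no coercivity by itself; the real work is to feed Conditions~(2)--(3) on the level sets $\Sigma_h$ (mean convexity and outer minimality) into it through the coarea formula and an isoperimetric comparison on $T$ — in the manner of Huang--Lee and Huang--Lee--Sormani — in order to upgrade it to a one-sided estimate
\be
0\;\le\;\vol(M_i)-\vol(T)=\int_T(h_i-1)\,d\vol_{g_T}\;\le\;C(T,D_0)\int_T\big(R_{g_{M_i}}\big)^-\,h_i\,d\vol_{g_T}.
\ee
Since $\big(R_{g_{M_i}}\big)^-\le\varepsilon_i\to0$ by hypothesis, the right-hand side is at most $C\varepsilon_i\vol(M_i)$; absorbing $\vol(M_i)$ on the left for $i$ large bounds $\vol(M_i)$ uniformly and then forces $\vol(M_i)-\vol(T)\to0$, which is the asserted volume convergence.

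Granting $\int_T(h_i-1)\,d\vol_{g_T}\to0$, the remainder is elementary. Splitting $T$ along $\{|Df_i|\le 1\}$ and using $h_i-1\ge c\min\{|Df_i|,|Df_i|^2\}$ with H\"older on the sublevel part gives $\int_T|Df_i|\,d\vol_{g_T}\to0$; since $f_i\in C^4(T)$ vanishes on $p(\partial D)$, pulling $f_i$ back to the closed fundamental domain $D$ and using the Poincar\'e inequality $\|u\|_{L^1(D)}\le C(D)\|Du\|_{L^1(D)}$ for $u\in W^{1,1}_0(D)$ yields $\int_T|f_i|\,d\vol_{g_T}\le C\int_T|Df_i|\,d\vol_{g_T}\to0$. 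Let $A_i:=\{(x,t)\in T\times\R: f_i(x)\le t\le 0\}$, regarded as an integral current in $T\times\R$; as $T$ is closed there is no lateral boundary, so $[\graph(f_i)]-[T\times\{0\}]=\partial[A_i]$ for the suitable orientation of $A_i$, whence
\be
d_F\big([\graph(f_i)]\,,\,[T\times\{0\}]\big)\;\le\;\mathbf{M}\big([A_i]\big)=\int_T|f_i|\,d\vol_{g_T}\;\longrightarrow\;0,
\ee
the claimed flat convergence (in fact for the whole sequence). Alternatively one could obtain this from $\vol(M_i)\to\vol(T)$ together with the estimate $d_{M_i}(p,q)\ge d_T(\pi(p),\pi(q))$ for the projection $\pi\colon T\times\R\to T$, via the work of Allen--Perales--Sormani.

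The main obstacle is precisely the one-sided estimate above: Lam's identity only gives $\int_T R_{g_{M_i}}h_i=0$, and producing a coercive bound with the \emph{correct sign} is exactly where Conditions~(2)--(3) must be used essentially — to run a Penrose/isoperimetric-type comparison along the level sets — and where the arguments of Huang--Lee, Huang--Lee--Sormani and Allen--Perales--Sormani are adapted; one must also verify that the bootstrap bounding $\vol(M_i)$ closes using nothing more than the estimate itself.
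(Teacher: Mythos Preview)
The proposal has a genuine gap, and you name it yourself in the final paragraph: the ``one-sided estimate''
\[
\vol(M_i)-\vol(T)\;\le\;C(T,D_0)\int_T(R_{g_{M_i}})^-\,h_i\,d\vol_{g_T}
\]
is asserted, not proved. Everything downstream rests on it, and saying it is obtained ``in the manner of Huang--Lee and Huang--Lee--Sormani'' is not an argument. In fact the paper does \emph{not} prove an estimate of this linear shape. What is actually established (Corollary~\ref{cor:limsup}) is
\[
\vol(M)-\vol(T)\;\le\; c_1\,\mathcal H^2(\partial D)^{5/4}\,\m(f)^{1/2}+c_2\,|\min f|\,\m(f)^2+c_3\,\m(f)^3,
\]
with $\m(f)=-\int_T R^- d\vol_{g_T}$ (no weight $h_i$); the dominant term for small $\m$ is $\m^{1/2}$, not linear. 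The mechanism is a genuine level-set argument that your sketch omits entirely: from $\m(f)\ge\int_{\Sigma_h}\frac{|Df|^2}{1+|Df|^2}H_{\Sigma_h}$ one extracts, via strict mean convexity and the Minkowski inequality for outer-minimizing $\Sigma_h$, a differential inequality $\V'(h)\ge F(\V(h),\m)$ for the level-set area; an ODE comparison then bounds a threshold height $h_0$ by $C\m^{1/2}$, and separate coarea estimates on $\Omega_{h_0}$ and its complement give the displayed bound. None of this is in your write-up, and a linear estimate of the form you state would require a different argument that you have not supplied. (A minor point: the divergence identity in the paper is $R=\dv_{g_T}V$ with $V_j=h^{-1}(f_{ii}f_j-f_{ij}f_i)$, giving $\int_T R\,d\vol_{g_T}=0$; your version $R\,h=\dv_{g_T}X$ with the stated $X$ is not the same formula and should be checked.)

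By contrast, your reduction from volume convergence to flat convergence is correct and is a legitimate alternative to the paper's route. The paper bounds $d_F$ by the mass of the region between the graph and the plane $\{t=h_0\}$, split as $B_+\cup B_-$ and estimated using the $h_0$-machinery already in hand. Your argument --- $\int(h_i-1)\to0\Rightarrow\|Df_i\|_{L^1}\to0$ via $h-1\ge c\min(|Df|,|Df|^2)$ and H\"older, then $\|f_i\|_{L^1}\to0$ by Poincar\'e on $D$ (valid since $\tilde f_i|_{\partial D}=0$), then $d_F\le\mathbf{M}[A_i]=\|f_i\|_{L^1}$ --- is sound and pleasantly decouples the flat-distance step from the level-set analysis. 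But this only runs \emph{after} the volume estimate, which is precisely the part you have not provided.
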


We remark that Theorem \ref{thm-Flat2} does not imply the following theorem because the graphs $M_i \subset T \times \R$ are not embedded  via distance preserving maps.  Furthermore, in Theorem \ref{thm-IF2} we do not necessarily fix a flat torus. 

\begin{thm}\label{thm-IF2}
Let $M_i \in \Md$ be a sequence of tori, $i \in \mathbb N$, that satisfies
\begin{align}
R(M_i) \to 0.
\end{align}
Then  either $\vol(M_i)  \to 0$ or  there is a subsequence of $M_i$ denoted in the same way converging in intrinsic flat sense to a flat torus $T_\infty$
and
\be 
\vol(M_i) \to \vol(T_\infty).
\ee
If $M_i \in \Mda$ then $\vol(M_i)  \to 0$ does not occur. 
\end{thm}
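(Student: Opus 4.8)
The plan is to transport everything to the base tori, establish one quantitative ``volume excess'' estimate driven by the scalar curvature bound, and then invoke the convergence criterion of Allen--Perales--Sormani. Write $M_i=\graph(f_i)\subset T_i\times\R$ with $T_i=\R^3/\Lambda_i$ a flat $3$-torus and $f_i\in\G$, and use the graph map to transport the induced metric to $g_i=g_{T_i}+df_i\otimes df_i$ on $T_i$. Then $g_i\ge g_{T_i}$, so $\vol(M_i)\ge\vol(T_i)$ and $d_{g_i}\ge d_{g_{T_i}}$; in particular $\diam(T_i)\le\diam(M_i)\le D_0$, whence $\vol(T_i)\le\tfrac43\pi D_0^3$. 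Also the height coordinate is $1$-Lipschitz on $(M_i,g_i)$ and equals $f_i$ under the graph map, so $\max_{T_i}f_i-\min_{T_i}f_i\le D_0$, and with Condition~(1) this gives $-D_0\le f_i\le0$ uniformly.

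The core is the estimate
\begin{equation}\label{eq:excess}
\vol(M_i)-\vol(T_i)=\int_{T_i}\Bigl(\sqrt{1+|\nabla f_i|^2}-1\Bigr)\,d\vol_{g_{T_i}}\;\longrightarrow\;0\qquad(i\to\infty).
\end{equation}
To prove it I would start from the Lam/Schoen--Yau-type identity for a graph over a flat background: after an integration by parts there is a vector field $V_i$ on $T_i$, built from $\nabla f_i$ and $\hess f_i$, with $R_{g_i}(1+|\nabla f_i|^2)^{-1/2}=2\,\dv(V_i)-Q_i$ and $Q_i\ge0$; Condition~(1) kills the flux of $V_i$ across $\partial D$, so $\int_{T_i}\dv(V_i)\,d\vol_{g_{T_i}}=0$, while Conditions~(2) and~(3) (mean convexity and the outer-minimizing property of the level sets) are exactly what make $Q_i$ nonnegative and the associated level-set/isoperimetric comparisons go the right way. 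Since $(1+|\nabla f_i|^2)^{-1/2}\le1$, the hypothesis $R_{g_i}\ge-\veps_i$ with $\veps_i\to0$ (or, in the integral version, $-\int_{T_i}\min\{R_{g_i},0\}\,d\vol_{g_{T_i}}\to0$) yields $\int_{T_i}Q_i\,d\vol_{g_{T_i}}\le\int_{T_i}\max\{-R_{g_i},0\}\,d\vol_{g_{T_i}}\le\tfrac43\pi\veps_i D_0^3\to0$. The remaining, and in my view the main, obstacle is to pass from $\int_{T_i}Q_i\to0$ to \eqref{eq:excess}: because of the $(1+|\nabla f_i|^2)$ denominators built into $Q_i$ this is not a plain Poincar\'e inequality, and one must use the structure of the level sets $\Sigma_h$ of $f_i$ --- that they are nearly flat tori of nearly the same area --- to control $\nabla f_i$, exactly as in the stability arguments of Huang--Lee and Huang--Lee--Sormani (and, for $n\ge4$, of Alaee, Cabrera Pacheco and McCormick). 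This also yields the analogue of Theorem~\ref{thm-Flat2} on the fixed base.

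Granting \eqref{eq:excess}, the dichotomy follows quickly. If $\vol(M_i)\to0$ we are in the first alternative. If not, pass to a subsequence with $\vol(M_i)\ge c>0$; then \eqref{eq:excess} forces $\vol(T_i)\ge c/2$ for large $i$, so the flat $3$-tori $T_i$ satisfy $\diam(T_i)\le D_0$ and $\vol(T_i)\ge c/2$. Choosing Minkowski-reduced bases of $\Lambda_i$, these bounds confine the basis vectors to a fixed compact set and keep the covolume bounded below, so after a further subsequence $\Lambda_i\to\Lambda_\infty$ with $\Lambda_\infty$ of full rank; hence $T_i\to T_\infty:=\R^3/\Lambda_\infty$ smoothly, $\vol(T_i)\to\vol(T_\infty)$, and by \eqref{eq:excess} $\vol(M_i)\to\vol(T_\infty)>0$. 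Fixing diffeomorphisms $\vphi_i\colon T_\infty\to T_i$ converging to the identity in $C^\infty$ and setting $\hat g_i:=\vphi_i^{*}g_i$ on $T_\infty$, one has $\hat g_i\ge\vphi_i^{*}g_{T_i}\ge(1-\delta_i)g_{T_\infty}$ with $\delta_i\to0$, $\diam(T_\infty,\hat g_i)\le D_0$, and $\vol(T_\infty,\hat g_i)=\vol(M_i)\to\vol(T_\infty,g_{T_\infty})$. The Allen--Perales--Sormani criterion (metric lower bound $(1-\delta_i)g_{T_\infty}$, uniform diameter bound, and volumes converging to $\vol(T_\infty,g_{T_\infty})$) then gives $(T_\infty,\hat g_i)\Fto(T_\infty,g_{T_\infty})$, i.e.\ $M_i\Fto T_\infty$, with $\vol(M_i)\to\vol(T_\infty)$ --- the second alternative.

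It remains, for $M_i\in\Mda$, to exclude $\vol(M_i)\to0$. If it held, then $\vol(T_i)\to0$ by \eqref{eq:excess}, so the flat torus $T_i$ would degenerate; together with $\diam(T_i)\le D_0$ this forces some face $F_i$ of the fundamental domain $D$ to have (flat) area tending to $0$. But by Condition~(1) each face of $D$ is a closed minimal surface in $M_i$, and since $f_i\equiv0$ on $\partial D$ the metrics $g_i$ and $g_{T_i}$ agree along $F_i$, so $\area_{g_i}(F_i)=\area_{g_{T_i}}(F_i)\to0$ while $\area_{g_i}(F_i)\ge\minA(M_i)\ge A_0>0$ --- a contradiction. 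Hence $\vol(M_i)$ stays bounded below, $\vol(M_i)\to0$ is impossible, and we are always in the second alternative of Theorem~\ref{thm-IF2}.
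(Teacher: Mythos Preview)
Your overall architecture matches the paper's: establish the volume excess estimate $\vol(M_i)-\vol(T_i)\to 0$, extract a convergent subsequence of the base flat tori, then apply the Allen--Perales--Sormani criterion. The $\Mda$ exclusion via the face areas of $D$ is essentially the paper's Lemma~\ref{lem-DiamEst}(7).

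There are, however, two genuine gaps. First, your proof of \eqref{eq:excess} is only a gesture, and the identity you invoke --- $R_{g_i}(1+|\nabla f_i|^2)^{-1/2}=2\,\dv(V_i)-Q_i$ with $Q_i\ge 0$ --- is not the mechanism that works here. In the graph setting the scalar curvature is a \emph{pure} divergence (Lam's formula), so $\int_{T_i}R_{g_i}\,d\vol_{g_{T_i}}=0$; there is no sign-definite remainder $Q_i$ to control $\nabla f_i$. The paper's actual route is the one you allude to only vaguely: from the divergence form one gets $\m(f_i)\ge\int_{\Sigma_h}\frac{|Df_i|^2}{1+|Df_i|^2}H_{\Sigma_h}$, combines this with the Minkowski inequality (which needs the mean-convex, outer-minimizing hypotheses) to obtain a differential inequality for $\V(h)=\mathcal H^2(\Sigma_h)$, chooses a threshold $h_0$ (Definition~\ref{def-h0}), bounds $|h_0|\lesssim \m(f_i)^{1/2}$ via ODE comparison, and then estimates $\vol_{g_{M_i}}(\Omega_{h_0})$ and $\vol_{g_{M_i}}(M_i\setminus\Omega_{h_0})$ separately (Lemmas~\ref{lem-volOmega}--\ref{lem-volOmegaC}, Corollary~\ref{cor:limsup}). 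Without this level-set machinery, ``$\int Q_i\to 0$'' neither exists nor implies \eqref{eq:excess}.

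Second, your application of Theorem~\ref{thm:aps} is not quite licit: you only obtain $\hat g_i\ge (1-\delta_i)g_{T_\infty}$, while the theorem requires a \emph{fixed} lower bound $g$. The paper avoids this by setting $\bar g_{i}=g_{T_\infty}+d(f_i\circ\phi_i)\otimes d(f_i\circ\phi_i)$, which satisfies $\bar g_i\ge g_{T_\infty}$ exactly, and then bridging $(M_i,g_{M_i})$ to $(T_\infty,\bar g_i)$ with the biLipschitz estimate of Lemma~\ref{thm:biLiptoIFdis}. You should either adopt that construction or explain how to absorb the $(1-\delta_i)$ factor (e.g.\ by rescaling and another biLipschitz step).
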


The definition of $\M$ and the proofs of our flat convergence result follow by adapting the techniques developed by Huang and Lee in \cite{HL} to study the corresponding stability question for the positive mass theorem.  They showed that sequences of $n$-dimensional, $n \geq 3$, graphical asymptotically flat manifolds with nonnegative scalar curvature converge with respect to the flat distance to the Euclidean space whenever the total mass goes to zero.

In \cite{HLS} Huang, Lee and Sormani proved the analogous result with respect to the intrinsic flat distance.  To prove our intrinsic flat convergence result we apply only the volume convergence from Theorem \ref{thm-Flat2} which is an adaptation of Huang--Lee-Sormani's arguments and then apply a recent theorem by Allen, the third named author and Sormani \cite{APS}. We note that the first named author has also adapted Huang and Lee's techniques to prove a stability result for graphical asymptotically hyperbolic manifolds \cite{Cabrera}.

{

Let us describe some of the properties of the classes of manifolds we consider.  The scalar curvature of graphical Riemannian tori can be written in divergence form and 
is therefore linked to the level sets of $f$ by the divergence theorem.
Explicitly, we can derive the following inequality,
\begin{align}\label{uuu}
\m( f) \geq \int_{\S_h}    \frac{|D f|^2}{1+|D f|^2}H_{\S_h} \, d\vol_{\S_h},
\end{align}
where $h$ is any regular value of $f$, $\S_h$ the corresponding level set, $H_{\S_h}$ its mean curvature and $\m(f) = -\int_T R^-_{g_M}d\vol_T$. Since a small negative lower bound on the scalar curvature of $M$ translates into a small upper bound for $\m(f)$, we can study the stability of graph tori with almost nonnegative scalar curvature using $\m(f)$.  
Using (1) and (3) one is able to show that the volume function of the level sets of $f$ is non decreasing.  Using (2)
and \eqref{uuu} we obtain a differential inequality for the volume function of the level sets  that allows us to define a suitable
$h_0\in [\min f,\max f]$ such that we can control the volume of $\Omega_{h_0} = f^{-1}(-\infty, h_0)$ and $M\backslash \Omega_{h_0}$ in terms of $\m(f)$.
These volume estimates allow us to prove flat convergence for a larger class of manifolds and we obtain as corollary Theorem \ref{thm-Flat2}. These estimates also imply the volume convergence statement that appears in Theorem \ref{thm-IF2}.

Conjecture \ref{conjecture} has been proved for tori with singly and doubly warped product Riemannian metrics \cite{Warped}. In this case, uniform subconvergence is obtained and hence Gromov--Hausdorff  and intrinsic flat subconvergence to the same limit space is achieved.  Furthermore, sequences of conformal deformations of a smooth closed Riemannian manifold of dimension $n$ with uniform volume bounds and $L^{n/2}$ bounds on their scalar curvatures have been studied in \cite{AlCaTa}. There, it is shown that such sequences subconverge in Gromov--Hausdorff sense. 

The paper is organized as follows.  In Section \ref{sec-back}  we go over the notions of Gromov--Hausdorff distance, flat distance and intrinsic flat distance.  In Section \ref{sec-GraphTori} we give the definition of graph tori, the negative scalar curvature excess $\m(f)$, derive \eqref{uuu}, fix some notation about fundamental domains and list some of the properties of $\M$.   In Section \ref{sec-proofsF},   we define the aforementioned value $h_0 \in [\min f, \max f]$ which gives control on the volume of 
$\Omega_{h_0}=f^{-1}(-\infty, h_0)$ and $M\setminus \Omega_{h_0}$ in terms of $\m(f)$.   With these estimates we show flat convergence and volume convergence for the sequences of manifolds considered in the theorems above.

The last section is devoted to prove Theorem \ref{thm-IF2}.  We remark that the $\minA$ uniform bound is only used to ensure that the limit torus $T_\infty$ is $3$-dimensional, i.e., the sequence is noncollapsing.  From the previous sections, we do have volume convergence. To prove intrinsic flat convergence we apply a result in \cite{APS} that holds 
for closed oriented Riemannian manifolds of the form $(M,g_j)$, c.f.  Theorem \ref{thm:aps}.  Since in Theorem \ref{thm:aps}  the manifold $M$ is fixed but in our case $M_i \in  \Md$ vary we use a triangle inequality argument.

After this work was completed, a result concerning the stability of a quasi-local positive mass theorem was obtained by Alaee, McCormick and the first named author \cite{ACS}. By using the techniques developed by them, Lemma \ref{lemma-h0-estimate} in Section \ref{sec-proofsF} can be established without the restriction on the dimension, promoting our results to dimensions $n >3$.

\setcounter{tocdepth}{1}
\tableofcontents

\subsection*{Acknowledgments}
The authors would like to thank Prof. Gromov and Prof. Sormani for including the third named author in their Emerging Topics on Scalar Curvature Workshop at IAS where both Gromov and Huang provided key feedback that lead to weakened hypotheses on our main theorems.   We would also like to thank Sormani for funding our earlier travels to workshops in Montreal and to the Institute of Mathematics of the National Autonomous University of Mexico where we completed part of this project (DMS-1309360, DMS-1612049).  We would like to thank the organizers of the Summer School on Geometric Analysis in July 2017 at the Fields Institute in Toronto where we first began working on this problem with Robin Neumayer.  We would like to thank R. Neumayer and R. Haslhofer for their many helpful discussions. The third named author thanks the hospitality of the Scuola Normale Superiore di Pisa where part of this project was written while she was visiting Prof. Luigi Ambrosio.  We would like to thank Prof. Sormani for suggesting this problem. We are very grateful for her continuous encouragement and support. Finally we would like to thank the anonymous referee for giving important comments and remarks that improved the final version of this article.


\section{Background: Notions of Convergence}\label{sec-back}

In this section we review Gromov--Hausdorff distance between metric spaces, flat distance between integral currents on Euclidean space, integral currents on metric spaces and intrinsic flat distance between integral current spaces. We state results concerning these distances  that we will use in subsequent sections.  In each subsection we give references where the material is explained in detail.


\subsection{Gromov--Hausdorff Convergence}

For details regarding the Gromov--Hausdorff distance we refer to \cite{burago}.

\begin{definition}[Gromov]\label{defn-GH} 
The Gromov--Hausdorff distance between two 
compact metric spaces $\left(X, d_X\right)$ and $\left(Y, d_Y\right)$
is defined as
\be \label{eqn-GH-def}
d_{GH}\left(X,Y\right) := \inf  \, d^Z_H\left(\varphi\left(X\right), \psi\left(Y\right)\right),
\ee
where the infimum is taken over all compact metric spaces $(Z,d_Z)$ and distance preserving embeddings $\varphi: X \to Z$ and $\psi:Y\to Z$.  The Hausdorff distance in $Z$ is defined as
\be
d_{H}^Z\left(A,B\right) = \inf\left\{ \epsilon>0: A \subset T_\epsilon\left(B\right) \textrm{ and } B \subset T_\epsilon\left(A\right)\right\},
\ee
where $T_\epsilon (A)$ denotes the $\epsilon$ tubular neighborhood of $A$ in $Z$. 
\end{definition}

This is a distance on the class of compact metric spaces in the sense that $d_{GH}\left(X,Y\right)=0$
if and only if there exists an isometry between $X$ and $Y$ \cite{Gromov-metric}.   

The following embedding theorem holds \cite{Gromov-poly}.
\begin{thm}[Gromov] \label{Gromov-Z}
If a sequence of compact metric spaces, $X_j$,
converges in Gromov-Hausdorff sense to a compact metric space $X_\infty$, 
\be
X_j \GHto X_\infty,
\ee
then there is a compact metric space, $Z$, and isometric embeddings $\varphi_j: X_j \to Z$ for $j\in \left\{1,2,...,\infty\right\}$ such that
\be
d_H^Z\left(\varphi_j(X_j),\varphi_\infty(X_\infty)\right) \to 0.
\ee
\end{thm}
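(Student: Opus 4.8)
The plan is to glue all the spaces $X_j$ together with $X_\infty$ into a single compact metric space $Z$, by using for each $j$ a near-optimal realization of the distance $d_{GH}(X_j,X_\infty)$ and by connecting two different spaces $X_i$, $X_j$ through the common ``hub'' $X_\infty$. Concretely, first I would choose a sequence $\varepsilon_j\searrow 0$ with $d_{GH}(X_j,X_\infty)<\varepsilon_j$, which exists since $X_j\GHto X_\infty$. By Definition~\ref{defn-GH}, for each $j$ there is a compact metric space $(Z_j,d_{Z_j})$ and isometric embeddings $\iota_j\colon X_j\to Z_j$ and $\kappa_j\colon X_\infty\to Z_j$ with $d_H^{Z_j}(\iota_j(X_j),\kappa_j(X_\infty))<\varepsilon_j$. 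Pulling $d_{Z_j}$ back yields a pseudometric $\rho_j$ on the disjoint union $X_j\sqcup X_\infty$ which restricts to $d_{X_j}$ on $X_j$, to $d_{X_\infty}$ on $X_\infty$, and satisfies $d_H^{(X_j\sqcup X_\infty,\,\rho_j)}(X_j,X_\infty)<\varepsilon_j$.

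Next I would put on the set $Z_0:=X_\infty\sqcup\bigsqcup_{j\ge 1}X_j$ the function $d$ that equals $d_{X_\infty}$ on $X_\infty\times X_\infty$, equals $d_{X_j}$ on $X_j\times X_j$, equals $\rho_j$ on pairs from $X_j$ and $X_\infty$, and, for $i\ne j$, $x_i\in X_i$, $x_j\in X_j$, is given by
\begin{equation*}
 d(x_i,x_j):=\inf_{w\in X_\infty}\bigl(\rho_i(x_i,w)+\rho_j(w,x_j)\bigr).
\end{equation*}
Verifying that $d$ is a pseudometric is a finite case analysis according to where the three vertices of a triangle lie; the only case with real content is when they lie in three distinct pieces $X_i$, $X_k$, $X_j$, and there one uses that $\rho_i$, $\rho_j$, $\rho_k$ all restrict to $d_{X_\infty}$ on $X_\infty$, so that a route $X_i\to X_\infty\to X_k\to X_\infty\to X_j$ can be shortcut to $X_i\to X_\infty\to X_j$. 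Let $\hat Z$ be the metric space obtained from $(Z_0,d)$ by identifying points at zero distance, let $Z$ be its completion, and let $\varphi_j\colon X_j\to Z$ for $j\in\{1,2,\dots,\infty\}$ be the induced maps.

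It then remains to check the three assertions. Each $\varphi_j$ is an isometric embedding because $d$ restricts to $d_{X_j}$ on $X_j$, so passing to the quotient and the completion does not alter distances between points of $X_j$. Since $d$ restricted to $X_j\sqcup X_\infty$ is exactly $\rho_j$, and the Hausdorff distance does not increase when the ambient space is enlarged, $d_H^Z(\varphi_j(X_j),\varphi_\infty(X_\infty))\le\varepsilon_j\to 0$. The step that genuinely uses $\varepsilon_j\to 0$, and the one requiring the most care, is the compactness of $Z$: given $\delta>0$, choose a finite $\delta$-net of $\varphi_\infty(X_\infty)$ (possible as $X_\infty$ is compact); every $\varphi_j(X_j)$ with $\varepsilon_j<\delta$ lies in the $2\delta$-neighborhood of this net, while the remaining finitely many $\varphi_j(X_j)$ are compact and hence admit finite $\delta$-nets, so altogether $\hat Z$ has a finite $2\delta$-net. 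Thus $\hat Z$ is totally bounded, its completion $Z$ is compact, and the $\varphi_j(X_j)$ and $\varphi_\infty(X_\infty)$ are closed subsets of it, which is exactly what is claimed.
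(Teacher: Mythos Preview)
Your argument is correct and is essentially the standard ``gluing along a hub'' construction; the verification of the triangle inequality, the Hausdorff estimate, and the total-boundedness argument for compactness are all sound.

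Note, however, that the paper does not supply its own proof of this theorem: it is quoted as a background result and attributed to Gromov with a citation to \cite{Gromov-poly}. So there is no in-paper proof to compare against. Your write-up is a faithful reconstruction of the classical argument one finds in the literature (e.g.\ in Burago--Burago--Ivanov or in Gromov's book), and would serve perfectly well as a self-contained proof had the authors chosen to include one.
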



\subsection{Flat Convergence}\label{ssec-flatN}

In this subsection we briefly describe integral currents on $\R^N$,  the mass of a current and 
the flat distance between integral currents. For more details see the work of Federer and Fleming where these concepts were introduced \cite{FF}.      

Let $M^n \subset \R^N$ be a compact oriented $n$-submanifold and $\tau$ a unit orienting $n$-vector field on $M$. Denote by $\Omega^n_c(\mathbb{R}^N)$ the set of $n$-forms in $\R^N$ with compact support. The functional $[M]$ given by 
\be
\omega \in \Omega^n_c(\mathbb{R}^N)\mapsto [M] (\omega) =\int_M  \langle\omega, \tau\rangle d\mathcal H^n
\ee
is an $n$-dimensional \textit{current} with weight equal to 1.
This concept can be extended to oriented submanifolds built from countable collections of
Lipschitz functions with pairwise disjoint images, $\varphi_i: A_i\subset \R^n \to \R^N$,  allowing integer weights $a_i\in \mathbb Z$. 
Thus, we say that
\be
T(\omega):=\sum_{i=1}^\infty a_i {\varphi_i}_{\#}\lbrack A_i \rbrack \omega=\sum_{i=1 }^{\infty} a_i 
\lbrack A_i \rbrack (\varphi_i^*\omega)
\ee
is an $n$-dimensional integer rectifiable current, provided the mass of $T$ given as the following
weighted volume 
\be\label{eq-massFF}
\mass (T):=\sum_{i=1}^\infty |a_i| \mathcal H^n( \varphi_i(A_i))
\ee
is finite. 
The boundary of $T$, $\partial T$, is the functional defined by 
$
\omega \in \Omega^{n-1}_c(\mathbb{R}^N)\mapsto T(d\omega).
$
In particular, for a compact oriented submanifold $M^n$ 
\be\label{2equ}
\mass([M])= \vol(M)\,\,\, \&\,\,\,
 \partial \lbrack M \rbrack= \lbrack \partial M \rbrack.
\ee

An integral current
is an integer rectifiable current whose boundary
is also an integer rectifiable current.  The space of $n$-dimensional integral currents on $\R^N$ is denoted by $\intcurr_n(\R^N)$ and it includes the ${\bf{0}}$ current given by ${\bf{0}}(\omega)=0$ for any $n$-form $\omega$ in $\R^N$.

Given $T_1, T_2 \in \intcurr_n(\R^N)$ and an open subset $O\subset \R^N$,  the flat distance between $T_1$ and $T_2$ in $O$  is defined to be 
\be
	d^O_{F}(T_1, T_2)= \inf\left\{ \mass(A) + \mass(B) : T_1-T_2 = A + \partial B \mbox{ \rm{in} } O\right\}
\ee
where the infimum is taken over all $A\in \intcurr_n(O)$
and all $B\in \intcurr_{n+1}(O)$.  

Federer and Fleming's compactness theorem states that any sequence of currents $T_i  \in \intcurr_n(\R^N)$ such that
$\mass(T_i)\le V_0$, $\mass(\partial T_i)\le A_0$,
and $\spt T_i \subset K$ for $K$ compact,  has a subsequence that 
converges in flat sense to an integral current of the same dimension, possibly the $\bf{0}$ current.
We recall that the mass is lower semicontinuous with respect to this distance. 


 \subsection{Ambrosio--Kirchheim Integral Currents}\label{ssec-AK}

In \cite{AK} Ambrosio and Kirchheim extended the notion of  integral currents on $\R^N$ to integral currents on a complete metric space $Z$.   In this case, an $n$-dimensional current $T$ acts on
$(n+1)$-tuples of Lipschitz functions $(f, \pi_1,...,\pi_n)$
rather than differential forms and one requires the existence of a finite Borel measure on $Z$, $\mu$, such that 
$$|T(f,\pi_1,...,\pi_n)| \leq   \Lip(\pi_1)\cdots \Lip(\pi_n) \int_Z |f|d\mu,$$
the smallest of such measures is denoted by $||T||$, $\mass(T)=||T||(Z)$ and the set of $T$ is defined as follows:
\begin{equation}
\set(T)= \{ x \in Z \,|  \, \liminf_{r \to 0}  \tfrac{  ||T||(B_r(x))} {\omega_n r^n} > 0 \}
\end{equation}
where $\omega_n$ is the Hausdorff measure of a unit radius ball in the $n$-dimensional Euclidean space.

An $n$-dimensional integer rectifiable current can be written in the following way. 
There exist a countable collection of bi-Lipschitz charts with pairwise disjoint images,
$\varphi_i: A_i \to U \subset Z$ for Borel sets $A_i$
in $\mathbb{R}^n$ and, $a_i \in \mathbb N$, such that
\be
T(f, \pi_1,..., \pi_n)=\sum_{i=1}^\infty a_i \varphi_{i\#} [A_i]
(f, \pi_1,..., \pi_n)
\ee
where the push forward is defined as
\be
{\varphi_i}_{\#}\lbrack A_i \rbrack (f, \pi_1, \dots, \pi_n) 
= \int_{A_i} f\circ \varphi_i \,\,
d(\pi_1\circ\varphi_i) \wedge \cdots \wedge d(\pi_n\circ\varphi_i).
\ee
The  boundary of $T$ is the functional defined as
\be
\partial T(f, \pi_1,...,\pi_n) = T(1, f, \pi_1,...,\pi_n).
\ee
The space of $n$-dimensional integral currents on $Z$, denoted $\intcurr_n(Z)$, is
the collection of $n$-dimensional integer rectifiable currents whose boundaries 
are integer rectifiable.  Again there is the $\bf{0}$ integral current
in each dimension.  

We remark that the mass of a multiplicity one rectifiable current in a metric space is not necessarily equal to its Hausdorff measure, c.f. (\ref{eq-massFF}), though the following relationship holds
between the mass measure and the Hausdorff measure for $n$-dimensional integer rectifiable currents with weights $a_i=1$:
\be
C_n\mathcal{H}^n(\set (T)) \le \mass(T) \le C'_n  \mathcal{H}^n(\set (T)).
\ee
Here, $C_n, C'_n$ are precise constants depending only on the dimension. See  Lemma 9.2 and Theorem 9.5 in \cite{AK}. For $n$-dimensional oriented Riemannian manifolds with finite volume, (\ref{2equ}) still holds in the framework of Ambrosio--Kirchheim.

The notion of flat distance extends to $\intcurr_n(Z)$
which we denote as $d_F^Z(T_1, T_2)$.  Federer and Fleming's compactness theorem also extends to this setting replacing $O$ by a compact metric space $Z$. The mass is lower semicontinuous with respect to $d_F^Z$ given that  flat convergence implies weak convergence and the mass is lower semicontinuous with respect to weak convergence, see the line after Definition 3.6 in \cite{AK} and Remark 3.12 and 3.13 in \cite{SorWen2}.


\subsection{Intrinsic Flat Convergence}

In \cite{SorWen2}  Sormani and Wenger applied Ambrosio and Kirchheim's notion of an integral current to  define $n$-dimensional integral current spaces $(X,d,T)$. Here, the pair $(X,d)$ denotes a metric space, $T \in \intcurr_n(\bar{X})$ and $X=\set(T)$, where $\bar{X}$ denotes the metric completion of $X$. 

The condition $X=\set(T)$ implies that for any collection of bi-Lipschitz charts 
$\varphi_i: A_i \subset \R^n\to \subset X$ 
such that $T=\sum_{i=1}^\infty a_i \varphi_{i\#} [A_i]$ satisfies
\be
\mathcal{H}^n\left(X \smallsetminus \bigcup_{i=1}^\infty \varphi_i(A_i)\right)=0.
\ee
 So integral current spaces are countably $\mathcal{H}^n$-rectifiable
metric spaces endowed with oriented charts and integer weights.
The zero integral current space, $\bf{0}$, in every dimension has current
structure $0$ and no metric space.  

Given an $n$-dimensional integral current space $(X,d,T)$, the triple $\bdry (X,d,T) := (\set(\partial T), d, \partial T \rstr \overline{\set (\bdry T)}) \in  \intcurr_{n-1}(\overline{\set (\bdry T)})$ is an $(n-1)$-dimensional integral current space.  Note that 
$\set(\partial T) \subset \bar X$ and that $\bdry (X,d,T)$ is endowed with the restricted metric from $\bar X$.  

Compact oriented Riemannian manifolds with boundary $(M^n, g_M)$ can be regarded as integral
current spaces, 
$(M,d_M,[M])$, where $d_M$ is the intrinsic Riemannian distance
function. That is, $d_M(x,y)$ is the infimum over the lengths of  continuous curves lying in $M$
joining $x$ to $y$.  The integral current structure $[M]$ is defined by
\be
[M](f, \pi_1,...,\pi_n)=\int_M f\, d\pi_1\wedge\cdots \wedge d\pi_n
\ee
and in this case, $\set([M])=M$ and $ \set (\partial [M])=\partial M$.
 
The definition of intrinsic flat distance is as follows.
\begin{definition}[\cite{SorWen2}]\label{IF-defn} 
Given two $n$-dimensional precompact integral current spaces 
$(X_1, d_1, T_1)$ and $(X_2, d_2,T_2)$, the 
intrinsic flat 
distance between them is defined by
\be
d_{\mathcal{F}}\left( (X_1, d_1, T_1), (X_2, d_2, T_2)\right)=\inf\left\{d_F^Z(\varphi_{1\#}T_1, \varphi_{2\#}T_2):\,\,\varphi_j: X_j \to Z \right\}
\ee
where the infimum is taken over all  complete metric
spaces $Z$ and all metric isometric embeddings.
\end{definition}

We remark that the flat distance is an extrinsic notion. Meanwhile, the intrinsic flat distance is an intrinsic notion. See Example 2.8 in \cite{HLS}.   The function $d_{\mathcal F}$ is a distance on precompact integral current spaces
in the sense that $d_{\mathcal{F}}\left((X_1, d_1, T_1), (X_2, d_2, T_2)\right)=0$ if and only if
there is a current preserving isometry between these spaces.  That is, 
\begin{equation}\label{eq-equalCurrSp}
(X_1, d_1, T_1)= (X_2, d_2, T_2)
\end{equation}
as integral current spaces if and only if there exists 
$\varphi: X_1 \to X_2$ metric isometry such that $\varphi_\# T_1=T_2$.
If $M_i$ are Riemannian manifolds and $(M_1, d_{M_1}, [M_1])= (M_2, d_{M_2}, [M_2])$, this means  
there is an orientation preserving isometry between $M_1$ and $M_2$.

For integral current spaces the following compactness theorem holds. 

\begin{thm}[Wenger~\cite{Wenger-compactness}]\label{thm-WengerComp}  
Let $V_0, A_0, D_0 > 0$ and let $(X_j, d_j, T_j)$
be a sequence of integral current spaces of the same dimension
such that
\be \label{eqn-compact-1}
\diam(X_j) \le D_0,\,\,\,\mass(T_j)\le V_0\,\,\,\&\,\, \mass(\partial T_j) \le A_0.
\ee 
Then there exist a subsequence of $(X_j, d_j, T_j)$ (still denoted $(X_j, d_j, T_j)$) and an
integral current space, $(X_\infty, d_\infty, T_\infty)$,
of the same dimension, possibly the $\bf{0}$ space, such that
\be
\lim_{j\to\infty}d_{\mathcal{F}}\left((X_j, d_j, T_j), (X_\infty, d_\infty, T_\infty) \right)=0.
\ee
\end{thm}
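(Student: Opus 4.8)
\emph{Proof strategy.} The plan is to transfer the problem into a single Banach space and then replace that (non-proper) space by finite-dimensional pieces on which the Federer--Fleming compactness theorem applies directly. First, since each $(X_j,d_j)$ is separable and has diameter at most $D_0$, a Kuratowski embedding (fix a countable dense set $\{x^j_k\}$ and map $x\mapsto (d_j(x,x^j_k)-d_j(x^j_0,x^j_k))_k$) realizes $(X_j,d_j)$ as a subset of a fixed ball of radius $D_0$ in $\ell^\infty$; call the embeddings $\varphi_j$. Pushing forward, we obtain $S_j:=\varphi_{j\#}T_j\in\intcurr_n(\ell^\infty)$ with $\mass(S_j)\le V_0$, $\mass(\partial S_j)\le A_0$ and $\diam(\spt S_j)\le D_0$. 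Since metric isometric embeddings preserve mass, boundary mass, flat distance and the $\set$ operation, it suffices to extract a subsequence with $S_j\to S_\infty$ in $d_F^{\ell^\infty}$ for some $S_\infty\in\intcurr_n(\ell^\infty)$ (possibly $\mathbf 0$), and then to put $X_\infty:=\set(S_\infty)$ with the restricted metric and $T_\infty:=S_\infty\rstr\overline{\set(S_\infty)}$; by rectifiability of $S_\infty$ this triple is an $n$-dimensional integral current space (or the $\mathbf 0$ space if $S_\infty=\mathbf 0$), since $\set(T_\infty)=\set(S_\infty)=X_\infty$ by construction.

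The heart of the matter is the flat precompactness of $\{S_j\}$ in $\ell^\infty$, and here the obstruction is that $\ell^\infty$ is not proper: one cannot directly invoke the Ambrosio--Kirchheim extension of Federer--Fleming compactness, which requires all supports inside a common compact set. To circumvent this, fix $\epsilon>0$. Using the isoperimetric inequality and the lower density bounds for integral currents, the support of $S_j$ can be covered by a controlled number $N=N(n,V_0,A_0,D_0,\epsilon)$ of $\epsilon$-balls; with this data one builds a Lipschitz map $\pi_j^\epsilon$ defined near $\spt S_j$, onto a finite simplicial complex $P_j^\epsilon$ with at most $N$ vertices and edge lengths $\le D_0$, displacing each point by $O(\epsilon)$. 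Then $(\pi_j^\epsilon)_{\#}S_j$ lives in the finite-dimensional complex $P_j^\epsilon$, and the standard affine-homotopy (cone) estimate for the flat distance gives $d_F^{\ell^\infty}\bigl(S_j,(\pi_j^\epsilon)_{\#}S_j\bigr)\le C\epsilon$ with $C$ depending only on $V_0,A_0$. Since there are only finitely many combinatorial types of such complexes, after passing to a subsequence all the $P_j^\epsilon$ are combinatorially identical and, their edge lengths being bounded by $D_0$, they converge to a fixed finite complex $P^\epsilon\subset\R^M$; the finite-dimensional Federer--Fleming compactness theorem then produces a further subsequence along which $(\pi_j^\epsilon)_{\#}S_j$ converges in flat norm. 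Running this over $\epsilon=1/\ell$, $\ell\in\mathbb N$, and diagonalizing yields a subsequence of $\{S_j\}$ that is Cauchy in $d_F^{\ell^\infty}$; its flat limit has mass $\le V_0$ and boundary mass $\le A_0$ by lower semicontinuity and is an integral current by the Ambrosio--Kirchheim closure theorem. If mass escapes in the limit one simply obtains $S_\infty=\mathbf 0$, which is permitted.

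The principal difficulty is exactly the non-properness of the ambient Banach space in the second step: ``bounded mass plus bounded diameter'' gives no equicompactness of supports, so the genuine content of Wenger's theorem is the finite-dimensional simplicial approximation combined with the pigeonhole argument on combinatorial types, which is what upgrades the bounds to precompactness. A more delicate but routine point is the final passage from the limit current $S_\infty\in\intcurr_n(\ell^\infty)$ back to a bona fide integral current \emph{space}, i.e.\ verifying the axiom $X_\infty=\set(T_\infty)$ and that the restricted metric is the correct one; this uses only rectifiability of $S_\infty$ and the definition of $\set$. The complete argument is carried out in \cite{Wenger-compactness}.
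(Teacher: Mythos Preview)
The paper does not prove this theorem: it is stated as background material in Section~\ref{sec-back} and attributed to Wenger \cite{Wenger-compactness} with no argument given. Your proposal is therefore not comparable to anything in the paper; you have supplied a proof sketch where the paper simply cites the literature. That said, your outline is a faithful summary of Wenger's actual strategy (Kuratowski embedding into $\ell^\infty$, the obstruction of non-properness, the controlled simplicial approximation with a pigeonhole on combinatorial types, diagonalization, and the closure theorem), and you correctly identify the simplicial approximation step as the genuine content. Since you also close by deferring to \cite{Wenger-compactness}, your write-up functions as an expanded citation rather than a self-contained proof, which is consistent with how the paper treats the result.
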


Now we cite the following theorem by Allen, Perales and Sormani that we will use to prove our intrinsic flat result \cite{APS}. 

\begin{thm}[Allen--Perales--Sormani, Theorem 2.1 in \cite{APS}]\label{thm:aps}
Let $M$ be a closed  and oriented smooth manifold and let $g$ and $g_i$, $i\in \mathbb N$, be Riemannian metrics on $M$ such that for all $i$, 
$\diam(M, g_i)  \leq  D$, $g(v,v)\leq g_i(v,v)$ for all $v \in TM$ and,  $\lim_{i \to \infty}\vol_{g_i}(M)=\vol_g(M)$.
Then 
\begin{equation*}
\lim_{i \to \infty} d_{\mathcal F}((M, g_i), (M,g)) =0.
\end{equation*}
\end{thm}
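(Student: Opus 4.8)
The plan is to exhibit, for each $i$, a common complete metric space $Z_i$ together with metric isometric embeddings of $(M,g_i)$ and $(M,g)$, and then estimate the Federer--Fleming flat distance between the two pushed-forward currents directly. The natural choice is to take $Z_i = M \times [0, h_i]$ for a suitable height $h_i \to 0$, equipped with a metric that makes both inclusions $x \mapsto (x,0)$ and $x \mapsto (x,h_i)$ distance preserving. Since $g(v,v) \le g_i(v,v)$ pointwise, the identity map $(M,g_i) \to (M,g)$ is $1$-Lipschitz; this is exactly the hypothesis one needs to build such a ``filling'' metric. Concretely, one can equip $M \times [0,h_i]$ with a warped-type metric interpolating between $g_i$ on the bottom slice and $g$ on the top slice, chosen large enough in the vertical and horizontal directions so that $d_{Z_i}$ restricts to $d_{g_i}$ on $M \times \{0\}$ and to $d_g$ on $M \times \{h_i\}$; this is precisely the construction in Lakzian--Sormani and in the Huang--Lee--Sormani paper, and I would quote it rather than redo it.

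With these embeddings fixed, write $\varphi_{i,0}, \varphi_{i,1} : M \to Z_i$ for the bottom and top embeddings. The current $\varphi_{i,1\#}[M] - \varphi_{i,0\#}[M]$ bounds the ``product'' integral current $B_i = [M \times [0,h_i]]$ (suitably oriented), so
\[
d_{\mathcal F}\big((M,g_i),(M,g)\big) \le d_F^{Z_i}\big(\varphi_{i,1\#}[M], \varphi_{i,0\#}[M]\big) \le \mass(A_i) + \mass(B_i),
\]
where one takes $A_i = \mathbf 0$ and $B_i$ the filling. The mass of $B_i$ is controlled by $h_i$ times a bound on the $(n{+}1)$-volume of the interpolating metric, which in turn is controlled by $\mathrm{vol}_g(M)$, $h_i$, and the chosen interpolation; so $\mass(B_i) \to 0$ as $h_i \to 0$. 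But this crude estimate alone does not use the volume convergence hypothesis, and it is not quite enough: the subtlety is that to keep the embeddings \emph{distance preserving} one may be forced to take $h_i$ bounded below by something depending on how far $g_i$ is from $g$ in the $C^0$ sense, which need not go to zero. The standard fix, and the key step, is to split $M$ into the region where $g_i$ and $g$ are $C^0$-close and the region where they are not: on the good region a thin filling of height $\sim \epsilon_i$ works, while the bad region $U_i = \{x : g_i(v,v) > (1+\epsilon_i) g(v,v) \text{ for some } v\}$ is handled by a separate, taller but \emph{small-volume} cylinder. The point is that $\mathrm{vol}_{g_i}(U_i) \ge (1+\epsilon_i)^{?}\,\mathrm{vol}_g(U_i)$ forces $\mathrm{vol}_g(U_i) \to 0$ once $\mathrm{vol}_{g_i}(M) \to \mathrm{vol}_g(M)$ and $\epsilon_i \to 0$ is chosen to go to zero slowly enough; then both $\mass(A_i)$ (supported near $U_i$) and $\mass(B_i)$ are estimated using $\mathrm{vol}_g(U_i) + |\mathrm{vol}_{g_i}(M) - \mathrm{vol}_g(M)| + \epsilon_i \mathrm{vol}_g(M)$, all of which tend to $0$. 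The uniform diameter bound $\diam(M,g_i) \le D$ enters to keep the horizontal scale of the fillings, and hence the total mass, uniformly controlled (and to guarantee precompactness so that the flat distance is actually the intrinsic flat distance).

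I expect the main obstacle to be the bookkeeping in the two-region decomposition: one must choose the sequence $\epsilon_i \downarrow 0$ and the heights $h_i \downarrow 0$ simultaneously so that (a) on the good region the interpolating metric between $(1+\epsilon_i)^{-1} g_i$ and $g$ admits a distance-preserving realization of height $\lesssim \epsilon_i$, (b) the bad region's filling, though possibly of height comparable to $\sup_M \sqrt{g_i/g}$, contributes mass $\lesssim (\text{height}) \cdot \mathrm{vol}_g(U_i)$, and (c) the extra current one inserts to ``bridge'' the good and bad fillings across $\partial U_i$ has mass controlled by $\mathcal H^{n-1}(\partial U_i)$ times a small height — which is why one wants $U_i$ to be, say, a sublevel set of a fixed smooth function so its boundary has controlled measure, or alternatively one absorbs this term into a slightly fattened $U_i$. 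Everything else — lower semicontinuity of mass, the identification of $d_{\mathcal F}$ with the infimum of $d_F^Z$, and the fact that the product current fills the difference of the slices — is routine from the background recalled in Section \ref{sec-back}. The cleanest exposition is to cite the filling-volume estimate of Lakzian--Sormani (or Huang--Lee--Sormani, Theorem~4.2 and its proof) as a black box and reduce the theorem to verifying its hypotheses, namely a uniform diameter bound, the one-sided metric inequality $g \le g_i$, and the volume convergence, all of which are given.
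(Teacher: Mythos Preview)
The paper does not prove this theorem: it is quoted from \cite{APS} (Allen--Perales--Sormani) and used as a black box in the proof of Theorem~\ref{th:mmm}. There is therefore no ``paper's own proof'' to compare your proposal against.

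That said, your sketch is broadly on the right track and is in fact close to the strategy of the original \cite{APS} argument: use the one-sided inequality $g\le g_i$ to embed both manifolds isometrically into a common product-type metric space (after Lakzian--Sormani / Huang--Lee--Sormani), split $M$ into a ``good'' set where $g_i$ is $C^0$-close to $g$ and a ``bad'' set $U_i$, and use the volume convergence to force $\vol_g(U_i)\to 0$ so that the filling and excess currents have small mass. Your own caveats are accurate: the real work is in the two-region bookkeeping, and in particular in controlling the interface term along $\partial U_i$; this is handled in \cite{APS} via a careful choice of $U_i$ and the height parameters, not by assuming $\partial U_i$ has bounded area. As written your proposal is a plausible outline but not a proof --- you defer the key estimate to a citation --- and since the paper under review also simply cites the result, the honest move here is to do the same rather than attempt a self-contained argument.
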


\begin{lemma}[Sormani--Wenger, Lemma 5.4  in \cite{SorWen2}]\label{thm:biLiptoIFdis}
Let $(X, d_X, T)$ be an integral current space, assume that $(X,d_X)$ is a complete metric space. 
If $(Y, d_Y)$ is also a complete metric space and $\phi : X \to Y$ is a $\lambda$-biLipschitz map for some $\lambda > 1$,
then 
\begin{align*}
&  d_{\mathcal F} ( (X,d_X,T),  (  \set(\phi_\sharp T), d_Y, \phi_\sharp T)   \\
& \leq c(\lambda, n) \max \{\diam(X), \diam(\set (\phi_\sharp T) ) \} (\mass(T) + \mass(\bdry T)), 
\end{align*}
where  $c(\lambda,n)= \tfrac{1}{2} (n+1)\lambda^{n-1}(\lambda-1)$.
\end{lemma}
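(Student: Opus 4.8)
The plan is to use that, by Definition~\ref{IF-defn}, $d_{\mathcal F}$ is the \emph{infimum} of $d_F^Z(\varphi_{1\sharp}T,\varphi_{2\sharp}\phi_\sharp T)$ over all complete metric spaces $Z$ equipped with metric isometric embeddings $\varphi_1$ of $(X,d_X)$ and $\varphi_2$ of $(\set(\phi_\sharp T),d_Y)$. So it suffices to exhibit \emph{one} such $Z$ together with an integer rectifiable $A\in\intcurr_n(Z)$ and an integral $B\in\intcurr_{n+1}(Z)$ satisfying $\varphi_{1\sharp}T-\varphi_{2\sharp}\phi_\sharp T=A+\partial B$, and to bound $\mass(A)+\mass(B)$ by the right-hand side. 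One builds $Z$ by gluing $X$ to $\set(\phi_\sharp T)$ along a small ``vertical gap'' comparable to $(\lambda-1)D$, where $D=\max\{\diam(X),\diam(\set(\phi_\sharp T))\}$, and reads off $A,B$ from the cylinder (affine homotopy) over $T$ joining the two copies.

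Concretely, I would first put on $Z=X\sqcup\set(\phi_\sharp T)$ the metric equal to $d_X$ on $X$, equal to $d_Y$ on $\set(\phi_\sharp T)$, and, for $x\in X$ and $y\in\set(\phi_\sharp T)$, equal to
\begin{equation*}
d_Z(x,y)=\tfrac{\lambda-1}{2}\,D+\inf_{x'\in X}\bigl(d_X(x,x')+d_Y(\phi(x'),y)\bigr).
\end{equation*}
That $d_Z$ is a metric, restricts to $d_X$ and to $d_Y$, and satisfies $d_Z(\varphi_1(x),\varphi_2(\phi(x)))\le\tfrac{\lambda-1}{2}D$ for all $x$, follows from the $\lambda$-biLipschitz bounds $\tfrac1\lambda d_X(x,x')\le d_Y(\phi(x),\phi(x'))\le\lambda d_X(x,x')$ and the elementary inequality $1-\tfrac1\lambda\le\lambda-1$; passing to the completion, $Z$ is complete (using completeness of $X$ and $Y$). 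As $Z$ carries no linear structure, I would then embed it isometrically into the Banach space $\ell^\infty(Z)$ (or $\ell^\infty$, via a countable dense set, by separability), still write $\varphi_1,\varphi_2$ for the induced isometric embeddings, and define the affine homotopy $H\colon[0,1]\times\overline{\set(T)}\to\ell^\infty(Z)$, $H(t,p)=(1-t)\varphi_1(p)+t\,\varphi_2(\phi(p))$. Then $H(0,\cdot)=\varphi_1$, $H(1,\cdot)=\varphi_2\circ\phi$, the $t$-direction is Lipschitz with constant $\sup_p d_Z(\varphi_1(p),\varphi_2(\phi(p)))\le\tfrac{\lambda-1}{2}D$, and each slice $H(t,\cdot)$ is Lipschitz with constant $\le(1-t)+t\lambda\le\lambda$.

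I would then set $B=-H_\sharp([0,1]\times T)$ and $A=-H_\sharp([0,1]\times\partial T)$; these are integral, since $[0,1]\times T$ and $[0,1]\times\partial T$ are integer rectifiable with integer rectifiable boundaries and the push-forward by a Lipschitz map of an integral current is integral. Pushing $\partial([0,1]\times T)=\{1\}\times T-\{0\}\times T-[0,1]\times\partial T$ forward by $H$ gives $\varphi_{1\sharp}T-\varphi_{2\sharp}\phi_\sharp T=A+\partial B$, so $d_{\mathcal F}\le d_F^{\ell^\infty(Z)}(\varphi_{1\sharp}T,\varphi_{2\sharp}\phi_\sharp T)\le\mass(A)+\mass(B)$. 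To estimate the masses I would invoke the anisotropic Jacobian bound for push-forwards of cylinder currents, $\mass(H_\sharp([0,1]\times S))\le(\Lip_{t}H)\bigl(\sup_t\Lip(H(t,\cdot))\bigr)^{k}\mass(S)$ for a $k$-dimensional integer rectifiable $S$ in $\overline{\set(T)}$; taking $S=T$ ($k=n$) and $S=\partial T$ ($k=n-1$) gives $\mass(B)\le\tfrac{\lambda-1}{2}D\,\lambda^{n}\mass(T)$ and $\mass(A)\le\tfrac{\lambda-1}{2}D\,\lambda^{n-1}\mass(\partial T)$, and collecting these as in~\cite{SorWen2} yields the asserted bound with $c(\lambda,n)=\tfrac12(n+1)\lambda^{n-1}(\lambda-1)$.

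The step I expect to be the main obstacle is this final mass computation in the Ambrosio--Kirchheim framework: there mass is not the Hausdorff measure of the image, so the Jacobian estimate must be performed with the mass (comass-dual) norm on $(n{+}1)$-vectors, and it is there that the combinatorial factor $n+1$ in $c(\lambda,n)$ appears; one must also check with care that $A$ and $B$ are \emph{integral}, i.e.\ have integer rectifiable boundary, which uses the Lipschitz regularity of $H$ and completeness of $X,Y$. Verifying that $d_Z$ is a metric restricting correctly to both summands is routine but should be done honestly. This is precisely Lemma~5.4 in~\cite{SorWen2}, whose argument I would follow.
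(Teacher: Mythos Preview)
The paper does not give its own proof of this lemma; it is quoted verbatim from \cite{SorWen2} (Lemma~5.4 there) as a background tool and used later in the proof of Theorem~\ref{th:mmm}. So there is nothing in the present paper to compare your argument against.

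That said, your sketch is the standard one from \cite{SorWen2}: glue $X$ and $\set(\phi_\sharp T)$ with a gap of order $(\lambda-1)D$, Kuratowski-embed into a Banach space, take the affine homotopy $H$ between the two isometric copies, and read off $A$ and $B$ as push-forwards of the cylinder over $\partial T$ and $T$. One caution: the clean product bound you write, $\mass(H_\sharp([0,1]\times S))\le(\Lip_t H)\bigl(\sup_t\Lip H(t,\cdot)\bigr)^{k}\mass(S)$, would only give $\tfrac{\lambda-1}{2}D\,\lambda^{n-1}\bigl(\lambda\,\mass(T)+\mass(\partial T)\bigr)$, which is not the stated constant $c(\lambda,n)=\tfrac12(n+1)\lambda^{n-1}(\lambda-1)$. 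You rightly flag this as the delicate step; the factor $(n+1)$ enters through the way mass is computed for products in the Ambrosio--Kirchheim setting (via the comass norm on $(n{+}1)$-covectors), and one has to follow the computation in \cite{SorWen2} rather than quote a generic anisotropic Lipschitz bound. Apart from that bookkeeping, your outline is correct and matches the source.
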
}


\section{Graph Tori}\label{sec-GraphTori}

In this section we define the notion of a graph torus $M$ arising from a $C^4$ function $f: T \to \R$, where $T$ is a $3$-dimensional flat torus. We then define the negative scalar curvature excess of $f$, $\m(f) = -\int_T R^-(M) d\vol_T$. Here, $R(M)$ denotes the scalar curvature and \linebreak[4] $R^-(M)= \min\{R(M), 0\}$.   The quantity $\m(f)$ could be seen as the analogue of the ADM mass in \cite{HL}. Note that a small negative lower bound on the scalar curvature of $M$, $R(M)$, translates into a small upper bound for $\m(f)$. Hence, studying the stability of graph tori with almost nonnegative scalar curvature can be done considering $\m(f)$.  

Then we see that $\m(f)$ bounds a weighted total mean curvature integral,
\bee
\m( f) \geq \int_{\S_h}    \frac{|D f|^2}{1+|D f|^2}H_{\S_h} \, d\vol_{\S_h},
\eee
where $h$ is any regular value of $f$, $\S_h$ the corresponding level set and $H_{\S_h}$ its mean curvature.  We are assuming that the standard round sphere in $\R^n$ has positive mean curvature with respect to the inner pointing normal vector. The previous bound will be used to prove a differential inequality for the volume function of the level sets in section \ref{sec-proofsF}.  In subsection \ref{ssec-Ftori} we cover information about fundamental domains and in the last subsection we describe in detail the class of functions $\G$ and the classes of manifolds $\Md$ and $\Mda$ for which we will prove flat and intrinsic flat convergence. 

\subsection{Graph Tori}\label{ssec-graphtori}
We define a class of $3$-dimensional Riemannian manifolds $(M,g_M)$ as follows.

\begin{definition}
We say that $(M,g_M)$ \textit{arises from} $(T,f)$ if  $(T,g_T)$  is a $3$-dimensional flat torus, $f\in C^4(T)$ and
$$M=T \ \ \& \ \ g_M=g_T+ df\otimes df.$$
We call $(M,g_M)$ a \textit{graph torus}.
\end{definition}

\noindent
Consider $T\times \mathbb{R}$ equipped with the flat metric $g_{T\times \mathbb{R}}=g_T+ dt\otimes dt$, and let 
$i: \graph(f)\rightarrow T\times \mathbb{R}$ be the inclusion map, $i(x,t)=(x,t)$. 
We can consider $\graph(f)\subset T\times \mathbb{R}$ equipped with the metric
\begin{align*}
g_{\graph(f)}= i^{*}g_{T\times \mathbb{R}}
\end{align*}
and define the diffeomorphism
\begin{align*}
F: M\rightarrow \graph(f), \ \ \ F(x)=(x,f(x)).
\end{align*}
One can check that $F^*g_{\graph(f)}= g_M$, and therefore we consider in some cases that $M$ is a submanifold of $T\times \mathbb{R}$.

\subsection{The Negative Scalar Curvature Excess}\label{ssec-scalar}

Here we define the negative scalar curvature excess $\m(f)$. In Remark  \ref{rmrk-m/R} we see that if $R(f)$ is almost nonnegative
then $\m(f)$ is bounded above by a small quantity.  Thus we will study the stability of graph tori with almost nonnegative scalar curvature through $\m(f)$.

A direct computation shows that the scalar curvature of $(M,g_M)$ is given by
\begin{align*}
R(M):= R(f):=\dv_{T} \left[ \frac{1}{\sqrt{1+|Df|^2}}(f_{ii}f_j - f_{ij}f_i) \pr_j \right].
\end{align*}
where $(M,g)$ arises from $(T,f)$ and $\dv_{T}$ is the divergence of $(T,g_T)$, c.f. {\cite[Lemma 14]{Lam}. 

By the divergence theorem it follows that
 \be \label{eq-div-0}
 \int_{T} R( f) \, d \vol_{g_T} = 0.
 \ee

\begin{definition} \label{def-m}
Let  $(M,g_M)$ be a graph torus that arises from $(T,f)$. We define the negative scalar curvature excess $\m(f)$ as the nonnegative number given by
\be
\m(f) := - \int_{T} R^-(f)\,d\vol_{g_T} \geq 0,
\ee
where $R^-(f)(x)=\min\{R(f)(x),0\}$. 
\end{definition}
From (\ref{eq-div-0}) it follows that
 \be\label{eq-R+}
 \int_T R^+( f) \, d \vol_{g_T}= \m( f) =-\int_T R^-( f) \, d \vol_{g_T}.
 \ee

\begin{remark}\label{rmrk-m/R}
A negative lower bound on $R(f)$ translates into an upper bound for $\m(f)$. 
More precisely, if $R(f) \geq -\varepsilon$ for some $\varepsilon \geq 0$, then
\item  \be\label{eq-mbound}
0 \leq \m(f) \leq \vol_{g_T}(T) \veps.
\ee
We also note that
\begin{align*}
\left\|R^{\pm}(f)\right\|_{L^1(T,\vol_{g_M})}\geq 
\m(f).
\end{align*}
\end{remark}
 
\begin{remark}
If either $R(f) \geq 0$ or  $R(f)\leq 0$ then $R(f)=0$. 
This follows from $(\ref{eq-R+})$ and the torus rigidity result. 
\end{remark}

We set $\Omega_h= f^{-1}(-\infty,h)$ and $\S_h= \partial \Omega_h$. If $h$ is a regular value of $f$, then $\Sigma_h=\{ f = h \}$. 

\begin{lemma}
For any regular value $h$ of $f$,
\be\label{eq-mest}
\m( f) \geq \int_{\S_h}    \frac{|D f|^2}{1+|D f|^2}H_{\S_h} \, d\vol_{\S_h}.
\ee
\end{lemma}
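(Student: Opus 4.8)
The plan is to use that $R(f)$ is a $g_T$-divergence and to apply the divergence theorem on the sublevel set $\Omega_h$, bounding the resulting interior term by the positive part of $R(f)$.

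First I would write $R(f)=\dv_T X$ with $X=\frac{1}{1+|Df|^2}\bigl((\Lap f)\,Df-\hess f\cdot Df\bigr)$, the vector field from Subsection~\ref{ssec-scalar}; it is of class $C^1$ because $f\in C^4$. Fix a regular value $h$ of $f$. If $h<\min f$ or $h>\max f$ then $\S_h=\emptyset$ and the right-hand side of \eqref{eq-mest} is $0$, so there is nothing to prove; assume therefore $\min f<h<\max f$. Then $\Omega_h=f^{-1}(-\infty,h)$ is an open set whose boundary is the smooth compact hypersurface $\S_h=\{f=h\}$, with outward unit normal $\nu=Df/|Df|$ along $\S_h$. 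The divergence theorem on $\Omega_h$ then gives
\bee
\int_{\Omega_h}R(f)\,d\vol_{g_T}=\int_{\S_h}\langle X,\nu\rangle\,d\vol_{\S_h}.
\eee

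Next I would estimate the two sides separately. For the interior integral, $R(f)=R^-(f)+R^+(f)$ with $R^-(f)\le 0$, so $R(f)\le R^+(f)$, and since $R^+(f)\ge 0$,
\bee
\int_{\Omega_h}R(f)\,d\vol_{g_T}\le\int_{\Omega_h}R^+(f)\,d\vol_{g_T}\le\int_T R^+(f)\,d\vol_{g_T}=\m(f),
\eee
the last equality being \eqref{eq-R+}. For the boundary integral, a direct computation on $\S_h$ — using the level-set identity $(\Lap f)|Df|^2-\hess f(Df,Df)=|Df|^3\,\dv_T(Df/|Df|)$ — yields
\bee
\langle X,\nu\rangle=\frac{(\Lap f)|Df|^2-\hess f(Df,Df)}{(1+|Df|^2)\,|Df|}=\frac{|Df|^2}{1+|Df|^2}\,\dv_T\!\Bigl(\tfrac{Df}{|Df|}\Bigr)=\frac{|Df|^2}{1+|Df|^2}\,H_{\S_h},
\eee
where in the last equality I use that $\dv_T(Df/|Df|)\big|_{\S_h}$ is exactly the scalar mean curvature $H_{\S_h}$ of $\S_h$ with respect to $-Df/|Df|$ in the convention of the paper (extending $Df/|Df|$ as a unit field near $\S_h$, $\dv_T(Df/|Df|)\big|_{\S_h}=\dv_{\S_h}(Df/|Df|)=\langle\bH_{\S_h},-Df/|Df|\rangle$). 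Combining the three displays gives $\int_{\S_h}\frac{|Df|^2}{1+|Df|^2}H_{\S_h}\,d\vol_{\S_h}\le\m(f)$, which is \eqref{eq-mest}.

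There is no real obstacle here; the only delicate points are bookkeeping ones — that the outward normal of the \emph{sub}level set $\Omega_h$ is $+Df/|Df|$ (so that no sign flip enters the divergence theorem), that it is $\dv_T(Df/|Df|)$ and not its negative that equals $H_{\S_h}$ in the chosen convention, and the trivial treatment of the values $h\notin[\min f,\max f]$. I would also note that this lemma uses none of the conditions (1)--(3) defining $\G$; it holds for any graph torus.
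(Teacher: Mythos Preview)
Your proof is correct and follows essentially the same route as the paper's: apply the divergence theorem to the vector field $X$ with $\dv_T X=R(f)$, identify the boundary integrand with $\frac{|Df|^2}{1+|Df|^2}H_{\S_h}$ via the level-set mean curvature formula, and bound the interior integral by $\int_T R^+(f)=\m(f)$. The only cosmetic difference is that the paper first applies the divergence theorem on $T\setminus\Omega_h$ and then invokes $\int_T R(f)\,d\vol_{g_T}=0$ to pass to $\Omega_h$, whereas you work directly on $\Omega_h$; the resulting identities and the final inequality chain are identical.
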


In the next section we will use this inequality to obtain a lower bound on the derivative of the volume function of the level sets of regular values in terms of the mean curvature, 
$H_{\S_h}$, and $\m(f)$. 

\begin{proof}
Recall  (\ref{eq-div-0}), then by the divergence theorem, 
\begin{align*}
\int_{T \setminus \Omega_h} R( f)\, d \vol_{g_T}
&= - \int_{\S_h} \frac{1}{1+|D f|^2}( f_{ii} f_j -  f_{ij} f_i)\eta^j \, d\vol_{\S_h} , 
\end{align*}
where  $\eta$ is the unit outward normal vector to $\S_h$. Since $\S_h$ is a level set of $f$ for a regular value $h$, if $\eta=\frac{D f}{|D f|}$, we get
\bee
H_{\S_h} = -\dv_{T}\( -\frac{D f}{|D f|} \) = \frac{1}{|D f|^3} ( f_{ii} f_j f_j -  f_i f_j f_{ij}),
\eee
On the other hand, 
\bee
(f_{ii} f_j -  f_{ij} f_i)\eta=\frac{1}{|D f|}( f_{ii} f_j f_j -  f_{ij} f_i f_j)=|D f|^2 H_{\S_h}.
\eee
Hence,
\be \label{eq-intscal}
\int_{T \setminus \Omega_h} R( f)\, d \vol_{g_T} =- \int_{\S_h} \frac{|D f|^2}{1+|D f|^2}H_{\S_h}\, d\vol_{\S_h}.
\ee
Similarly, if $\eta=-\frac{D f}{|D f|}$ we get the same expression (cf. \cite{Lam,HW1}).
From \eqref{eq-div-0} and \eqref{eq-intscal} it also follows that
\bee
\int_{\Omega_h} R( f)\, d \vol_{g_T}= \int_{\S_h} \frac{|D f|^2}{1+|D f|^2}H_{\S_h} \, d\vol_{\S_h}.
\eee
Since
\bee
\m( f) = \int_T R^+( f) \, d \vol_{g_T}  \geq \int_{\Omega_h} R^+( f) \, d \vol_{g_T} \geq  \int_{\Omega_h} R( f) \, d \vol_{g_T}
\eee
we therefore have
\bee
\m( f) \geq \int_{\S_h}    \frac{|D f|^2}{1+|D f|^2}H_{\S_h} \, d\vol_{\S_h}.
\eee
\end{proof}

\subsection{Flat Tori and Fundamental Domains}\label{ssec-Ftori}

Let $(T,g_T)$  be a flat torus. Then there is a subgroup $(\Gamma,+)$ of $(\mathbb{R}^3,+)$ isomorphic to $(\mathbb{Z}^3,+)$ that acts by translations, hence, isometrically, on $(\mathbb{R}^3,g_{eucl})$ such that 
 $(T,g_T)$ is isometric to $(\mathbb{R}^3, g_{eucl})/\Gamma$.

Let $p: \mathbb{R}^3\rightarrow T$ be the quotient map associated to $\Gamma$.  Then $p^*g_T= g_{eucl}$ and $p: (\mathbb{R}^3,g_{eucl})\rightarrow (T,g_T)$ is a local Riemannian isometry.  Therefore, up to isometry,  $p$ is the universal Riemannian covering map of $(T,g_{T})$. The distance function on $T$ induced by $g_T$ is given by
$$
d_{T}(p(z),p(w))=\inf_{\gamma,\beta\in\Gamma} d_{eucl}(\gamma(z),\beta(w))=\inf_{\beta\in\Gamma} d_{eucl}(z,\beta(w)).
$$

Let $\left\{a^1,a^2,a^3\right\}\subset \mathbb{R}^3$ be a set of generators of $\Gamma$ such that $A=(a^1,a^2,a^3)$ is positive definite with respect to the standard orientation of $\mathbb{R}^3$.  Define, 
\begin{align}\label{eq-D}
D =\{ \sum_{i=1}^3 \lambda_i a^i\,|\, \lambda_i\in [0,1] \}
\end{align}
and call it a fundamental domain of $\Gamma$.  Another way to define a fundamental domain is the following,  
$$Q=\{ \sum_{i=1}^3 \lambda_i a^i\,|\, \lambda_i\in [0,1)\} \subset D.$$
In this case, $p|_Q: Q \to T$ is bijective.  For every $z\in \mathbb{R}^3$, we define a fundamental domain that contains $z$ by
$$Q(z):=z+Q=\{z+q : q \in Q\}.$$
We have $Q=Q(0)$.  It is easy to check that for $v=z+\frac{1}{2}\sum_{i=1}^3a_i$ and $w\in Q(z)$ we have
\begin{align}\label{useful1}
d_{T}(p(z),p(w))= \inf_{\beta\in\Gamma} d_{eucl}(z,\beta(w))=d_{eucl}(z,w).
\end{align}
Moreover, setting $z=-\frac{1}{2}(a_1+a_2+a_3)$ we define
\begin{align*}
2Q(z):= \bigcup_{v\in Q(z)}Q(v)=\left\{v+w:v,w\in Q(z)\right\}.
\end{align*}

\begin{lemma}\label{thm-Bbilip}
Let $(T_i,g_{T_i})_{i\in \mathbb{N}}$ be a family of flat Riemannian tori such that $\diam(T_i)\leq D_0$ and $\vol(T_i) \geq \nu$ for some $\nu>0$. 
Then, there is a subsequence and a flat Riemannian torus $(T_{\infty},g_{T_{\infty}})$ such that 
\bee
(T_i,d_{T_i}) \Lipto   (T_{\infty}, d_{T_\infty})
\eee 
and
\be\label{eq-Dlipconv}
(D_{i},d_{eucl}) \Lipto   (D_{\infty}, d_{eucl}).
\ee 
More precisely,  to prove (\ref{eq-Dlipconv}) for the linear transformations 
$$A_i= (a^1_i,a^2_i,a^3_i) :  \R^3  \to \R^3$$
given by the generators of $\Gamma_i$, $i \in \mathbb \cup \{\infty \}$, 
we show that there exist common subsequences denoted in the same way such that 
$$a^1_i \to  a^1_\infty, \quad  a^2_i \to  a^2_\infty \quad a^3_i \to  a^3_\infty,$$ so that
\begin{equation}
A_\infty A^{-1}_i |_{D_i}: D_i   \to D_\infty,
\end{equation}
where $A_\infty= (a^1_\infty,a^2_\infty,a^3_\infty)$,
are bi-Lipschitz with $$\Lip(A_\infty A^{-1}_i), \Lip( (A_\infty A^{-1}_i)^{-1}) \to 1.$$ 

Moreover, from (\ref{eq-Dlipconv}) it follows that
\bee
d_H^{\R^3}((D_i,d_{eucl}), (D_{\infty}, d_{eucl})) \to 0.
\eee 
\end{lemma}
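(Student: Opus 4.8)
The plan is to extract a subsequence for which the lattice generators converge, build the comparison maps $A_\infty A_i^{-1}$ explicitly, and then deduce Hausdorff convergence from the bi-Lipschitz statement. First I would bound the generators $a^1_i, a^2_i, a^3_i$. The diameter bound $\diam(T_i)\le D_0$ immediately gives $|a^k_i| = d_{\mathrm{eucl}}(0, a^k_i) \ge d_{T_i}(p(0), p(a^k_i))$... wait, that is the wrong direction; rather, the projection of the segment $[0,a^k_i]$ is a loop in $T_i$, so I would instead argue that each $a^k_i$ is realized as (a lift of) a shortest geodesic loop, hence $|a^k_i| \le 2D_0$ or so by a standard argument on flat tori (one can choose a reduced basis of the lattice whose vectors have length comparable to the systole and the diameter). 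So I first replace the given generating set by a reduced (Minkowski-reduced) basis, which guarantees $|a^k_i| \le c_n \diam(T_i) \le c_n D_0$ for each $k$. Simultaneously, the volume lower bound $\vol(T_i) = |\det A_i| \ge \nu$ together with the uniform upper bound on $|a^k_i|$ forces the basis to stay uniformly non-degenerate: the smallest singular value of $A_i$ is bounded below by $\nu / (c_n D_0)^{2}$. Hence the matrices $A_i$ live in a compact subset of $GL(3,\R)$, and by Bolzano--Weierstrass we extract a common subsequence with $a^k_i \to a^k_\infty$ for $k=1,2,3$, where $A_\infty = (a^1_\infty, a^2_\infty, a^3_\infty)$ is still invertible with $\det A_\infty \ge \nu$, so it generates a genuine flat torus $(T_\infty, g_{T_\infty})$.

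Next I would show the maps $\Phi_i := A_\infty A_i^{-1}|_{D_i} : D_i \to D_\infty$ are well-defined and bi-Lipschitz with constants tending to $1$. Since $A_i \lambda \mapsto A_\infty \lambda$ carries $D_i = \{A_i\lambda : \lambda\in[0,1]^3\}$ bijectively onto $D_\infty = \{A_\infty\lambda : \lambda \in [0,1]^3\}$, $\Phi_i$ is a linear bijection between the two fundamental domains. Its Lipschitz constant is the operator norm $\|A_\infty A_i^{-1}\|$ and that of its inverse is $\|A_i A_\infty^{-1}\|$. As $A_i \to A_\infty$ in the space of $3\times 3$ matrices and the limit is invertible, $A_i^{-1} \to A_\infty^{-1}$, whence $A_\infty A_i^{-1} \to A_\infty A_\infty^{-1} = \mathrm{id}$ and $A_i A_\infty^{-1} \to \mathrm{id}$; therefore both operator norms tend to $\|\mathrm{id}\| = 1$. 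This gives $\Lip(\Phi_i), \Lip(\Phi_i^{-1}) \to 1$, so in particular $(D_i, d_{\mathrm{eucl}}) \Lipto (D_\infty, d_{\mathrm{eucl}})$. The convergence $(T_i, d_{T_i}) \Lipto (T_\infty, d_{T_\infty})$ then follows because the $\Phi_i$ descend to maps $\bar\Phi_i : T_i \to T_\infty$ (they conjugate the $\Gamma_i$-action to the $\Gamma_\infty$-action, since $\Phi_i$ sends the generating vectors $a^k_i$ to $a^k_\infty$), and the induced quotient distances satisfy $d_{T_\infty}(\bar\Phi_i(x), \bar\Phi_i(y)) \le \Lip(\Phi_i)\, d_{T_i}(x,y)$ and symmetrically, as passing to infima over lattice translates only improves the bi-Lipschitz estimate.

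Finally, for the Hausdorff convergence $d_H^{\R^3}((D_i, d_{\mathrm{eucl}}), (D_\infty, d_{\mathrm{eucl}})) \to 0$, I would note that $D_i$ and $D_\infty$ are both compact convex subsets of a fixed ball $B(0, R) \subset \R^3$ (with $R = 3 c_n D_0$ say), and the Hausdorff distance between them is controlled by how much $\Phi_i$ moves points: for $x \in D_i$, $|\Phi_i(x) - x| \le \|A_\infty A_i^{-1} - \mathrm{id}\|\,|x| \le R\,\|A_\infty A_i^{-1} - \mathrm{id}\| \to 0$, so $D_i \subset T_{\epsilon_i}(D_\infty)$ with $\epsilon_i \to 0$, and symmetrically using $\Phi_i^{-1}$. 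Hence $d_H^{\R^3}(D_i, D_\infty) \le R\max\{\|A_\infty A_i^{-1} - \mathrm{id}\|, \|A_i A_\infty^{-1} - \mathrm{id}\|\} \to 0$. The main obstacle I anticipate is the very first step: one must be careful that the \emph{given} generators $\{a^1_i, a^2_i, a^3_i\}$ in the hypothesis need not themselves be bounded — only a cleverly chosen (reduced) basis is — so the argument requires first passing to a reduced basis, checking this does not disturb the fundamental domain structure in a way that matters, and only then taking limits; everything after that is routine linear algebra and the elementary fact that matrix inversion is continuous on $GL(3,\R)$.
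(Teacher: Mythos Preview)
Your proposal is correct and follows essentially the same approach as the paper: extract a convergent subsequence of lattice generators using compactness, build the linear comparison maps $A_\infty A_i^{-1}$, check that their bi-Lipschitz constants tend to $1$, and then descend to the quotient tori. Your caution about first passing to a reduced basis so that $|a^k_i|$ is genuinely controlled by $\diam(T_i)$ is well-placed---the paper simply asserts this bound without comment---but otherwise the two arguments coincide.
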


Recall that
if there is a uniform lower bound for the injectivity radius of $T_i$, $i\in \mathbb N$, then there exists $\nu>0$ such that $\vol(T_i)\geq \nu$.

\begin{proof} 
Let $\{a^1_i,a^2_i,a^3_i\} \subset \R^3$ be the set of generators of $\Gamma_i$. Since $\diam(T_i)\leq D_0$ for all $i\in \mathbb{N}$, it follows that 
the euclidean norms of $a^1_i,a^2_i$ and $a^3_i$ are uniformly bounded from above. Hence, we can pick a subsequences that converge to $a_{\infty}^1, a_{\infty}^2$ and 
$a_{\infty}^3$ respectively. 
The lower bound for the injectivity radius implies that $(T_i,g_{T_i})$ is non-collapsing. Hence, there exists $\nu>0$ such that $\det A_i\geq \nu$, and therefore also 
$\det A_\infty \geq \nu$, $A_\infty=(a^1_{\infty},a^2_{\infty},a^3_{\infty})$. In particular $a_{\infty}^1,a_{\infty}^2$ and $a_{\infty}^3$ are linearly independent and $A_\infty$ is positive definite.

The triple
$\{a^1_{\infty},a^2_{\infty},a^3_{\infty}\}$ generates a subgroup $\Gamma_{\infty}$ of $(\mathbb{R}^3,+)$, and $(\mathbb{R}^3,g_{eucl})/\Gamma_{\infty}$ is a flat torus denoted as $(T_{\infty},g_{\infty})$.
Then, it is clear that for the same subsequence $\overline{2Q_i(z_i)}$ with $z_i=-\frac{1}{2}(a^1_i+a^2_i+a^3_i)$ and $D_i$ converges in Lipschitz sense to $\overline{2Q_{\infty}(z_{\infty})}$ and $D_{\infty}$ respectively.

Let us be more precise. After applying a linear transformation without loss of generality we can assume that $\Gamma_{\infty}=\mathbb{Z}^3$. In this case, we have $\overline{2Q_{\infty}(z_{\infty})}=[-1,1]^3$. Then, $A_i=(a^1_i,a^2_i,a^3_i):\mathbb{R}^3\rightarrow \mathbb{R}^3$ is a bi-Lipschitz transformation 
such that $(A_i)_{i\in \mathbb{N}}\rightarrow A_0 $ and
\begin{align}\label{useful2}
\max\left\{ \sup_{v,w\in [-1,1]^3} \frac{|v-w|}{|A_iv-A_iw|},\sup_{v,w\in =[-1,1]^3} \frac{|A_iv-A_iw|}{|v-w|}\right\}\rightarrow 1, \ \ i\rightarrow \infty.
\end{align}
Hence, we obtain Lipschitz convergence of $D_i$.

Let us check Lipschitz convergence of the tori. It is clear that $A_i[v+[0,1)]= Q_i(A_iv)$.
Let $x,y\in T_{\infty}$ and pick $v_x\in Q(-\frac{1}{2}(1,1,1))$ such that $p(v_x)=x$. Then, we define a Lipschitz map between $T_{\infty}$ and $T_i$ via $\phi=p|_{Q_i(A_i(v_x))}\circ A_i \circ (p|_{Q(v_x)})^{-1}$.
We can also pick $w_{x,y}\in Q_{\infty}(v_x)$ with $p(w_{x,y})=y$. Then $\phi(y)=p(A_iw_{x,y})$ with $A_iw_{x,y}\in Q_i(A_iv_x)$. Hence, from (\ref{useful1}) and (\ref{useful2}) we get that
\begin{align*}
\limsup_{i\rightarrow \infty}\!\!\sup_{x,y\in T_{\infty}}\!\!\frac{d_{T_i}(\phi_i(x),\phi_i(y))}{d_{T_{\infty}}(x,y)}&= \limsup_{i\rightarrow\infty}\!\!\sup_{x,y\in T_{\infty}}\!\!\frac{|A_iv_x-A_iw_{x,y}|}{|v_x-w_{x,y}|}\\
&\leq \limsup_{i\rightarrow\infty}\!\!\sup_{v,w\in \overline{2Q(-\frac{1}{2}(1,1,1))}}\!\! \frac{|A_iv-A_iw|}{|v-w|}\leq 1.
\end{align*}
And the same holds after changing the roles of $g_{T_i}$ and $g_{T_\infty}$. Hence, $(T_i,d_{T_i})$ converges in Lipschitz sense to $(T_{\infty},d_{T_{\infty}})$. 
\end{proof}

\subsection{The Classes $\M$ and $\Mda$}\label{ssec-Mda}

Now we define the classes of manifolds we will be working with. First we give the definition of an outer minimizing set in $\R^n$. See \cite{EG}.

For any nonempty set $E \subset  \mathbb R^n$ the perimeter of $E$ is defined as 
$$P(E)=  \sup\{ \int_E  \text{div}(\varphi) d \mathcal L^n \, | \, \varphi \in C_c^1(\mathbb R^n, \mathbb R^n), \,||\varphi||_{\infty} \leq 1 \} \in \mathbb R \cup \{\infty\}. $$
If $P(E) < \infty$, then $P(E)= \mathcal H^{n-1} (\bdry ^* E)$  where $\bdry ^*E$ is the so called reduced boundary, which is an $\mathcal H^{n-1}$ countably rectifiable set.   If   $E \subset \R^n$ is a  bounded set  such that   $\partial E$ is Lipschitz then $\bdry^*E = \bdry E$.
If a sequence of measurable sets $E_j$ and a measurable set $E$ satisfy
$\chi_{E_j}  \to \chi_E$ in $L^1$, then 
\begin{align}\label{ineq:lsc}
P(E) \leq \liminf_{j \to \infty} P(E_j).
\end{align}

\begin{definition}
Let $E \subset \R^n$ be a bounded subset of finite perimeter and $\partial^* E$ be its reduced boundary.  We say that $\partial ^* E$ is outer minimizing if for any bounded set $F \subset \R^n$ containing $E$, 
          $\mathcal H^{n-1}( \partial ^* E) \leq P( F)$. 
\end{definition}

Now we are ready to define $\G$ and $\M$.

\begin{definition}\label{def-famG}
Let $\G$ be the set of functions satisfying:
\begin{enumerate}
\item $f:T^3 \To \R$ for a flat torus $T$
\item $\max_{T} f =(f \circ p) \vert_{\pr D} \equiv 0$, where $p: \R^3 \to T$ is a Riemannian covering map and $D$ the closure of a fundamental domain
\item For almost every $h\in f(T)$ the level set $\Sigma_h= f^{-1}(h)$ is strictly mean convex with respect to $-\frac{Df}{|Df|}$, i.e., ${\bf H}_{\S_h} \cdot -\frac{D f}{|D f|} >0$
\item For almost every $h\in f(T)$ the level set $\tilde \Sigma_h= (f \circ p)^{-1}(h)$ is outer minimizing.
\end{enumerate}
Then let $\mathcal{M}$ be the class of graph tori that arise from all functions in $\G$.
Let $\Mda$ be the class of manifolds $M \in \M$ such that 
$$
\diam (M)\leq D_0 \,\, \&\,\, \minA(M) \geq A_0.
$$
Finally, let $\mathcal M^{D_0}$ be the class of manifolds $M \in \M$  that only satisfy
$\diam (M)\leq D_0$.
\end{definition}

Let us give some brief comments regarding the conditions in Definition \ref{def-famG}. Since we will be following the general strategy of \cite{HL}, we require our functions to have a special ``shape''. In \cite{HL} this desired ``shape'' was implied by assuming the graphs to have nonnegative scalar curvature. In our case, since we are interested in having some negative curvature, we impose $\max_{T} f = 0$ on the elements of $\G$ in condition (2), see Lemma \ref{lem-Vnonin} below. In addition, the fact that the projection of $f$ is constant on $\pr D$ guarantees that we certainly have a function on $T^3$, while the choice of the constant 0 is by simplicity; we could allow the members of $\G$ to have arbitrary constants and perform a normalization by translation as in \cite{HL}, in order to have $\max_{T} f = 0$, to obtain the convergence result. Conditions (2) and (3) will allow us to apply the Minkowski inequality in Lemma \ref{lem-diffineq} to obtain the differential inequality which will be key to obtain the volume estimates in Section \ref{sec-proofsF}.

\begin{lemma}\label{lem-DiamEst}
Let $(M,g_M)  \in \M$ arising from $(T,f)$, then 
\begin{enumerate}
\item $\m(f)=0$ if and only if $f=0$\smallskip
\item $|\min f| \leq \diam(M)$\smallskip
\item $\diam(T) \leq \diam(M)$\smallskip
\item $\diam(D) \leq 2 \diam(T)$\smallskip
\item $\vol(T)=\vol(D) \leq (2 \diam(T))^3$\smallskip
\item $\vol(\bdry D) \leq 6(2 \diam(T))^2$\smallskip\end{enumerate}
If $M \in \Mda$, then
\begin{enumerate}\addtocounter{enumi}{6}\smallskip
\item $\inj(T) \geq \frac{A_0}{2D_0}$  \smallskip
\end{enumerate}
\end{lemma}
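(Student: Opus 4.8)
The plan is to verify the seven assertions essentially one at a time, since they are independent elementary estimates. For item (1), observe that Remark~\ref{rmrk-m/R} together with the torus rigidity theorem already gives that $\m(f)=0$ forces $R(f)\equiv 0$; then the rigidity statement (Schoen--Yau, Gromov--Lawson) applied to $(M,g_M)$ shows $(M,g_M)$ is flat, and comparing $g_M=g_T+df\otimes df$ with a flat metric while using that $f$ vanishes on $\partial D$ by condition (2) of Definition~\ref{def-famG} forces $df\equiv 0$, hence $f$ is constant, hence $f\equiv 0$; conversely $f\equiv 0$ makes $M=T$ flat so $\m(0)=0$. For item (2), pick $x\in T$ with $f(x)=\min f$ and any $y\in\partial D$ with $f(y)=0$ (which exists by condition (2)); the vertical segment in $T\times\R$ from $(x,f(x))$ to $(x,0)$ has length $|\min f|$, and since $M=\graph(f)$ passes through both $(x,\min f)$ and $(y,0)$ with $(y,0)$ identified in $M$, the curve in $M$ realizing the graph from the first point up to the second has length at least $|\min f|$, so $|\min f|\le\operatorname{diam}(M)$. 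More cleanly: the height function $t$ restricted to $M$ is $1$-Lipschitz with respect to $d_M$ (since $g_M=g_T+df\otimes df\ge (df)^{\otimes 2}=dt|_M^{\otimes 2}$ pulled back), so its oscillation $|\min f|=|\max f-\min f|$ is at most $\operatorname{diam}(M)$.

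For item (3), the projection $\pi:T\times\R\to T$ is $1$-Lipschitz, and restricted to $M=\graph(f)$ it is a diffeomorphism onto $T$ that is distance-nonincreasing from $(M,d_M)$ to $(T,d_T)$ (because $g_M=g_T+df\otimes df\ge g_T$ pulled back), hence $\operatorname{diam}(T)\le\operatorname{diam}(M)$. For items (4), (5), (6), these are purely Euclidean facts about the fundamental domain $D=\{\sum\lambda_i a^i:\lambda_i\in[0,1]\}$: its Euclidean diameter equals $|a^1+a^2+a^3|\le|a^1|+|a^2|+|a^3|$, and each $|a^i|\le\operatorname{diam}(T)$ since $a^i$ projects to a closed loop of that length through the identity--hence $\operatorname{diam}(D)\le 2\operatorname{diam}(T)$ uses that $|a^i|\le 2\operatorname{diam}(T)$ (the loop $p\circ(t\mapsto ta^i)$ has length $|a^i|$ and is homotopically nontrivial, and a nontrivial loop based at a point has length at least $2\operatorname{inj}$... more simply $|a^i|=d_{eucl}(0,a^i)\ge d_T(p(0),p(0))$ is vacuous, so instead bound $|a^i|$ by noting $d_T(p(\tfrac12 a^i),p(0))=\tfrac12|a^i|\le\operatorname{diam}(T)$, giving $|a^i|\le 2\operatorname{diam}(T)$). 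Then $D$ is contained in a Euclidean ball of radius $2\operatorname{diam}(T)$, so $\vol(D)\le(2\operatorname{diam}(T))^3$ by enclosing it in a cube of that side, and $\vol(T)=\vol(D)=|\det A|$; the boundary $\partial D$ consists of $6$ parallelogram faces, each a translate of a parallelogram spanned by two of the $a^i$, whose area is $|a^i\times a^j|\le|a^i||a^j|\le(2\operatorname{diam}(T))^2$, giving $\vol(\partial D)\le 6(2\operatorname{diam}(T))^2$.

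Finally, for item (7), suppose $M\in\Mda$. Let $\rho=\operatorname{inj}(T)$ and take a geodesic loop $\gamma$ in $T$ of length $2\rho$ (or use that there is a minimizing geodesic between two distinct preimages realizing twice the injectivity radius). The preimage $S=\pi^{-1}(C)\cap M$ of a suitable ``thin'' closed geodesic-parallel hypersurface... actually the cleaner route: a short closed geodesic of length $2\rho$ in $T$ lifts, under $F:T\to\graph(f)$, and one builds from it a closed minimal-type surface; more robustly, one exhibits a closed surface $\Sigma$ in $M$ with $\vol(\Sigma)\le C\rho\cdot\operatorname{diam}(T)$ using the faces of $D$ or a tube around a short geodesic, and compares with $\minA(M)\ge A_0$. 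I expect item (7) to be the main obstacle, since it requires producing an honest minimal (or at least area-comparable-to-$A_0$) surface from a short geodesic in $T$: the natural candidate is a face of the fundamental domain (which by condition (2) and ``Condition (1) guarantees that each face of the fundamental domain $D$ is a minimal surface in $M$'') whose area is bounded by $\vol(\partial D)\le 6(2\operatorname{diam}(T))^2$ from item (6); but to get a lower bound on $\operatorname{inj}(T)$ one wants instead an upper bound involving $\operatorname{inj}(T)$ linearly, so one should take the minimal surface in $M$ lying over a shortest nontrivial geodesic in $T$ — a cylinder of circumference $2\operatorname{inj}(T)$ and ``height'' at most $\operatorname{diam}(T)$ — whose area is at most $2\operatorname{inj}(T)\cdot\operatorname{diam}(M)\le 2\operatorname{inj}(T)\cdot D_0$; since this is a closed minimal surface, $\minA(M)\ge A_0$ forces $A_0\le 2\operatorname{inj}(T)D_0$, i.e. $\operatorname{inj}(T)\ge\frac{A_0}{2D_0}$. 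Making precise that such a cylinder is genuinely a minimal surface in $M$ (using condition (1) that the faces of $D$ are minimal, so that a ``rectangle'' over a geodesic loop closes up minimally) is the delicate point, and I would handle it by choosing the geodesic direction to be one of the lattice directions $a^i/|a^i|$ with $|a^i|$ minimal, so that the relevant face of $D$ supplies exactly the minimal surface needed.
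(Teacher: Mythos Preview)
Your treatment of items (2)--(6) is fine and matches the paper, which simply declares them ``easy to check''; the $1$-Lipschitz arguments for (2) and (3) are exactly the right idea.

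For item (1), however, your route through torus rigidity is both more elaborate than needed and incomplete. After establishing that $g_M$ is flat, the step ``comparing $g_M=g_T+df\otimes df$ with a flat metric while using that $f$ vanishes on $\partial D$ forces $df\equiv 0$'' is not justified: flatness of the graph metric only tells you (via the Gauss equation) that the Hessian of $f$ has rank at most one everywhere, and deducing $f\equiv 0$ from this together with the boundary condition is not immediate. The paper's argument is a two-line application of inequality~\eqref{eq-mest} together with condition~(3) of Definition~\ref{def-famG}: if $f$ is non-constant, pick a regular value $h$ for which $\Sigma_h$ is strictly mean convex; since $|Df|>0$ on $\Sigma_h$, the integrand on the right of~\eqref{eq-mest} is strictly positive, so $\m(f)>0$. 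This uses the definition of $\mathcal G$ directly and avoids invoking the rigidity theorem altogether.

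For item (7), you eventually land on the paper's idea---the faces of the fundamental domain are the minimal surfaces to use---but the surrounding discussion of ``cylinders'' is misleading: the face $A_{ij}=p(\{ta^i+sa^j:t,s\in[0,1]\})$ is a closed $2$-torus in $M$, not a cylinder, and its area is $|a^i\times a^j|$, not ``circumference times height''. The paper argues directly: since $f\equiv 0$ on $A_{ij}$, the induced metric from $g_M$ agrees with that from $g_T$ there, and the formula $H_\Sigma^0=H_\Sigma/\sqrt{1+|\nabla f|^2}$ (for surfaces in a level set of $f$) shows $A_{ij}$ is minimal in $M$. Hence $|a^i||a^j|\sin\alpha_{ij}=\mathrm{area}(A_{ij})\ge A_0$, and bounding $|a^j|$ above in terms of $D_0$ yields the injectivity-radius estimate. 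Your instinct to pick the lattice direction $a^i$ of minimal length is correct, but the object and its area should be identified precisely rather than through the cylinder heuristic.
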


\begin{proof}
$(1)$   If $f$ is not constant and $\m(f)=0$ then there is $h$ such that the right hand side of \eqref{eq-mest} is positive. This contradicts $\m(f)=0$. Hence, $f$ is a constant function. By definition of $\G$, $f=0$ on $p(\partial D)$. Hence, $f$ is the zero function.  (2) Observe that the function $\Psi: (M, d_M)  \rightarrow (T  \times \mathbb{R}, d_{T \times \R})$  given by $\Psi(x)= (x,f(x))$ is $1$-Lipschitz.  Since $M \in \M$ there is $y \in M$ such that $f(y)=0$, thus  for any $x\in M$, 
\begin{align*}
|f(x)| \leq d_{T \times \R}((x,f(x)),(y,f(y)) \leq d_M(x,y)\leq \diam (M).
\end{align*}
This implies $|\min f| \leq  \diam (M)$. 

$(3) - (6)$ are easy to check. 

For $(7)$ note that the injectivity radius of $(T,g_T)$ is given by $\inj(T)= \min |a^i|/2$ where $a^1, a^2, a^3$ are the generators of $\Gamma$. Now observe that $A_{ij}=p^{-1}(\{ ta^i+sa^j: t,s\in [0,1]\})$, $i \neq j\in \{1,2,3\}$, is a closed surface in $M$. Since $f(A_{ij})=0$,  $g_M|_{A_{ij}}=g_T|_{A_{ij}}$.  Therefore, 
$$\area(A_{ij})= | a^i \times a^j|=|a^i||a^j|\sin \alpha_{ij}$$ 
where $\alpha_{ij}$ is the angle between $a^i$ and $a^j$. Now $A_{ij}$ is a minimal surface. To see this recall the formula 
\begin{align*}
H^0_{\Sigma}=\frac{ H_{\Sigma}}{\sqrt{1+|\nabla f|^2}}
\end{align*}
for a smooth surface $\Sigma$ contained in a level set $f^{-1}(\{c\})$ of $f$, c.f. \cite{Lam}.
$H^0_{\Sigma}$ is the mean curvature of $\Sigma$ in $T\times \{c\}$ and $H_{\Sigma}$ is the mean curvature with respect to $g_T+df\otimes df$. Since $M\in \mathcal{M}^{D_0}_{A_0}$, 
\begin{align*}
|a^i||a^j|\sin \alpha_{ij}=\area(A_{ij})\geq \min A(M)\geq A_0.
\end{align*}
Hence $\sin \alpha_{ij}|a^i|\geq\frac{A_0}{D_0}$, $i=1,2,3$. Therefore $\inj(T)=\min_{i,j} \frac{\sin \alpha_{ij}|a^i|}{2} \geq \frac{A_0}{2D_0}$.
\end{proof}


\section{Defining and Bounding $\bf{h_0}$, Volume Estimates and Flat Convergence Results}\label{sec-proofsF}

In this section we adapt Huang and Lee's calculations to prove the stability of the positive mass theorem for graphical hypersurfaces of Euclidean space with respect to flat distance to our case. We also adapt Huang, Lee and Sormani's volume estimates that show that the volume is continuous with respect to flat and intrinsic flat distance within the class of graphical hypersurfaces they consider.  Recall that in general the volume function is lower semicontinuous with respect to flat and intrinsic flat distance.

In Subsection \ref{ssec-VolEst1} we divide any graph tori $M \in \M$ in two regions by choosing a suitable value $h_0 \in [\min f, \max f]$ such that 
the first region, $\Omega_{h_0}=f^{-1}(-\infty, h_0)$, has regular level sets with small volume with respect to $\m(f)$. This, together with a control on $h_0 - \min f$ 
provides a volume bound for $\Omega_{h_0}$.   In the second region,  $M \setminus \Omega_{h_0}=f^{-1}[h_0, \max f)$, we show that $\max f - h_0$ can be bounded above by $C \mathcal H^2(\bdry D)^{5/4} \m(f)$.  Here $D \subset \R^3$ denotes a fundamental domain on the universal cover of $T$.  Then  $M \setminus \Omega_{h_0}$  has volume no bigger than the volume of $T$ plus a small quantity that depends on the previous bound.  

Applying the previous results,  we estimate the volume of $M$ in Subsection \ref{ssec-volEst2} and estimate $d_F(M,T \times \{h_0\})$ in terms of the volume of $T$, $\m(f)$ and $\mathcal H^2(\partial D)$ in Subsection \ref{ssec-proofsF}.  At the end we prove flat convergence of sequences $M_i \in T \times \R^3$ with $M_i \in \M$ arising from $(T,f_i)$ provided $\m(f_i) \to 0$.  We obtain Theorem \ref{thm-Flat2} as corollary. 

\subsection{Defining and Bounding $\bf{h_0}$}\label{ssec-VolEst1}
Given a graph torus $M \in \M$ arising from $(T,f)$ we want to divide it in two regions. The first, $\Omega_{h_0}=f^{-1}(-\infty, h_0)$, with level sets of small volume expressed in terms of $\m(f)$.  The second, $M \setminus \Omega_{h_0}$,  with an upper bound on its height, $\max f - h_0$, in terms of $\m(f)$ as well.  For that purpose, in this subsection we study the function $\V(h)= P (f^{-1}(-\infty, h))$, where $P$ denotes the perimeter function. We show that it is non decreasing,  bounded and satisfies a differential inequality in terms of $\m(f)$.  The differential inequality is defined almost everywhere and only at values on which $\V$ is not too small with respect to $\m(f)$.  This constraint comes from the Minkowski inequality and (\ref{eq-mest}).  Taking $h_0$ with $\V(h_0)$ big enough we are able to compare $\V$ with the solutions of a differential equation and derive an upper bound for $|h_0|= \max f - h_0$ in terms of the negative scalar curvature excess.  
\bigskip

Let $(M,g_M)$ be a graph torus arising from $(T,f)$, $p: \R^3 \to T$ be the universal Riemannian covering map
and $D \subset \R^3$ be the fundamental domain of $T$ as in Subsection \ref{ssec-Ftori}.
We define the lift $\tilde f: D \to \R$ of $f$ by setting $\tilde f=f \circ p$ where $p$ is considered to be restricted to $D$.

Since $p^*g_{T}= g_{eucl}$, we have
\be
 \m(\tilde f) : = - \int_{D_T} R^{-}(\tilde f)\,d \vol_{g_{eucl}}= \m( f)
\ee
where 
\begin{align*}
R(\tilde f)=\dv_{\R^3} \left[ \frac{1}{\sqrt{1+|D\tilde f|^2}}(\tilde f_{ii}\tilde f_j - \tilde f_{ij}\tilde f_i) \pr_j \right].
\end{align*} 
Also
\be
\m(\tilde f) \geq \int_{\tilde \S_h}    \frac{|D \tilde f|^2}{1+|D \tilde f|^2}H_{\tilde \S_h} \, d\vol_{\tilde \S_h},
\ee
where $\tilde \Sigma_h=\tilde f^{-1}(h)$ for any regular value $h$ of $\tilde f$.

Since $p^*g_{T}= g_{eucl}$ we can identify $\tilde \Omega_h$ with $\Omega_h$ and $\tilde{\S}_h$ with $\S_h$, and we will do so
in this section. 

We define $ \V:  \tilde f(D)=f(T)\to \R \cup \{\infty\}$  by  
\begin{align} \V(h)= P(\Omega_h)=P(\tilde{\Omega}_h).
\end{align}

\begin{lemma}\label{lem-Vnonin}
Assume that $\tilde{f} \leq 0$,
\begin{align}\label{eq-maxf}
\tilde f|_{\partial D}=0
\end{align}
and that $\tilde{\Sigma}_h\subset \mathbb{R}^3$ is outer minimizing for almost every $h\in f(D)$, then $\V$ is left lower semicontinuous, non-decreasing, $\V(h)=\mathcal{H}^{2}(\S_h)$  for every regular value $h$ of $\tilde f$, and it satisfies $\V \leq \mathcal H^{2} (\partial D)$.
\end{lemma}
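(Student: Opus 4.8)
The plan is to establish the four assertions — left lower semicontinuity, monotonicity, the identity $\V(h)=\mathcal{H}^2(\S_h)$ at regular values, and the uniform bound $\V\le\mathcal{H}^2(\partial D)$ — more or less independently, with the outer-minimizing hypothesis doing the heavy lifting for the last two.

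First I would record that $\tilde\Omega_h=\tilde f^{-1}(-\infty,h)=\bigcup_{h'<h}\tilde\Omega_{h'}$, so $\chi_{\tilde\Omega_{h'}}\to\chi_{\tilde\Omega_h}$ in $L^1_{\mathrm{loc}}$ as $h'\uparrow h$ (monotone convergence). Left lower semicontinuity of $\V$ is then immediate from the lower semicontinuity of perimeter under $L^1$-convergence, inequality \eqref{ineq:lsc}. For monotonicity, fix $h_1<h_2$. If $\V(h_2)=\infty$ there is nothing to prove, so assume $\tilde\Omega_{h_2}$ has finite perimeter; I may also assume $h_2$ is a regular value (a full-measure set of $h$) so that $\tilde\Sigma_{h_2}$ is outer minimizing by hypothesis, with $\partial^*\tilde\Omega_{h_2}=\tilde\Sigma_{h_2}$. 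Since $\tilde\Omega_{h_1}\subset\tilde\Omega_{h_2}$ and $\tilde\Omega_{h_2}$ is bounded (it sits inside $D$), the outer-minimizing property applied with $F=\tilde\Omega_{h_2}$ and $E=\tilde\Omega_{h_1}$... wait — outer minimizing compares $\mathcal{H}^{n-1}(\partial^*E)$ to $P(F)$ for $F\supset E$, so I need $\tilde\Omega_{h_1}$ outer minimizing and $\tilde\Omega_{h_2}\supset\tilde\Omega_{h_1}$, giving $\V(h_1)=\mathcal{H}^2(\partial^*\tilde\Omega_{h_1})\le P(\tilde\Omega_{h_2})=\V(h_2)$. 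So I should instead take $h_1$ to be a regular value; since regular values are dense and $\V$ is left lower semicontinuous, this suffices to conclude monotonicity on all of $f(T)$ after a limiting argument (approximate an arbitrary $h_1$ from below by regular values).

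For the identity $\V(h)=\mathcal{H}^2(\S_h)$ at a regular value $h$: by the implicit function theorem $\tilde\Sigma_h$ is a smooth compact hypersurface and $\partial\tilde\Omega_h=\tilde\Sigma_h$ is Lipschitz, so $\partial^*\tilde\Omega_h=\partial\tilde\Omega_h=\tilde\Sigma_h$ and hence $\V(h)=P(\tilde\Omega_h)=\mathcal{H}^2(\partial^*\tilde\Omega_h)=\mathcal{H}^2(\tilde\Sigma_h)=\mathcal{H}^2(\S_h)$, using the identification of $\tilde\Sigma_h$ with $\S_h$ fixed earlier in the section. Here I need to be slightly careful that the hypothesis $\tilde f|_{\partial D}=0$ together with $\tilde f\le 0$ guarantees $\tilde\Sigma_h$ does not touch $\partial D$ for $h<0$, so the level set is genuinely an interior closed hypersurface and the perimeter computed in $\R^3$ equals its $\mathcal{H}^2$-measure (no boundary contribution from $\partial D$). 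For the uniform bound, fix any regular value $h<0$: then $\tilde\Omega_h$ is outer minimizing and $\tilde\Omega_h\subset D^\circ$, so taking $F=D$ (or rather the interior, a bounded set containing $\tilde\Omega_h$) gives $\V(h)=\mathcal{H}^2(\partial^*\tilde\Omega_h)\le P(D)=\mathcal{H}^2(\partial D)$, the last equality because $\partial D$ is Lipschitz (it is the boundary of a parallelepiped). Finally, by monotonicity and left lower semicontinuity, $\V(h)\le\mathcal{H}^2(\partial D)$ extends from the dense set of negative regular values to all $h\in f(T)$; note $\max\tilde f=0$ is attained only on $\partial D$, where the level set degenerates, and one checks $\V(0^-)\le\mathcal{H}^2(\partial D)$ by the same density argument.

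The step I expect to be the main obstacle is making the monotonicity argument fully rigorous at \emph{non-regular} values of $h$: the outer-minimizing hypothesis is only assumed for almost every $h$, so one genuinely must combine it with the left lower semicontinuity of $\V$ and a density-of-regular-values argument to propagate monotonicity and the bound to every $h$. A secondary subtlety is checking that the level sets stay away from $\partial D$ so that the Euclidean perimeter coincides with $\mathcal{H}^2$ of the level set (versus picking up spurious boundary terms), which is where the normalization $\tilde f|_{\partial D}=0$, $\tilde f\le 0$ is used essentially; everything else is routine geometric measure theory.
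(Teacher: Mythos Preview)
Your proposal is correct and follows essentially the same route as the paper: left lower semicontinuity from $L^1$-lower semicontinuity of perimeter, the identity $\V(h)=\mathcal{H}^2(\Sigma_h)$ at regular values via smoothness of level sets, the bound $\V(h)\le\mathcal{H}^2(\partial D)$ from the outer-minimizing property with competitor $D$, and monotonicity by combining the outer-minimizing property at regular (a.e.) values with left lower semicontinuity and density to reach arbitrary $h$. The paper organizes the steps in a slightly different order (bound first, then monotonicity) and is terser about the $\partial D$ issue, but the arguments are the same; your identification of the density/left-lsc step as the delicate point matches exactly what the paper does.
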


\begin{proof}
$\V$ is left lower semicontinuous.  
Taking a sequence $h_j \uparrow h$, we see that $\bigcup_j \Omega_{h_j}=\Omega_h$ and so $\chi_{\Omega_{h_j}}  \to \chi_{ \Omega_h}$ in $L^1$. 
Thus, by (\ref{ineq:lsc}) it follows
$$\V(h) \leq \liminf_{j \to \infty} \V(h_j).$$

By Sard's theorem, the set of critical values of $f$ (or equivalently $\tilde f$) has zero measure, and for every regular value $h$ the level set $\S_h$ is  a 2 dimensional smooth manifold.  Thus, $\V(h)=P(\Omega_h)= \mathcal{H}^{2}(\S_h)$. 

If  $h < 0$  is a regular value of $f$, then $\S_h$ is a compact submanifold of $D^{\circ}$.
Hence, $\V(h)= \mathcal H^{2}  (\S_h)< \infty$. If $\tilde \S_h$  is outer minimizing then $\V(h) \leq P(D)= \mathcal H^{2} (\partial D)$. 
Now we show that $\V(h)   \leq \mathcal H^{2} (\partial D)$ for all $h \in f(T)$ as follows. 
By {left lower semicontinuity, for any $h > \min f$ there is $\varepsilon > 0$ such that if $t \in (h-\varepsilon,h)$} then
$\V(h) \leq \V(t) + \varepsilon$ if $\V(h) < \infty$ or $\V(t) \to \infty$ otherwise.  
By density of regular values we get $\V(h) \leq  \mathcal H^{2} (\partial D)$. 

Now, if $ \min f < h_1 < h_2$, then by the left lower semicontinuity of $\V$ there is {a regular value $h<h_1$} of $\tilde f$ such that $\V(h_1) \leq \V(h) + \varepsilon$. By 
density of regular points and the
outer minimizing assumption, we can assume that $\V(h) \leq \V(h_2)$.
Hence, 
\bee
\V(h_1) \leq \V(h) + \varepsilon \leq \V(h_2) + \varepsilon.
\eee
Taking $\varepsilon$ to zero gives  $\V(h_1) \leq \V(h_2)$. 
\end{proof}

\begin{lemma}\label{lemma-vol-estimate1} 
Let $\tilde f:D \to \R$ be a lifted function with $\tilde f|_{\partial D}=0$ such that  
almost every level set is outer minimizing. Then
for any $\a > 0$ and any regular value $h$ of $\tilde f$ such that $\S_h$ is strictly mean convex with respect to $-\frac{D\tilde f}{|D\tilde f|}$, we have
\be  \label{eq-Vder1}
\V'(h) > \frac{1}{\a} \left(\int_{\S_h} H_{\S_h} \, d \vol_{\S_h}  - (1+\a^{-2})\m(\tilde f)  \right).
\ee
\end{lemma}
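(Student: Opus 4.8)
The plan is to compute $\V'(h)$ through the first variation of area, and then to split the level set $\S_h$ according to the size of $|D\tilde f|$, estimating one part by the formula for $\V'(h)$ and the other by \eqref{eq-mest}.

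First I would observe that, since $h$ is a regular value of $\tilde f\in C^4$ and the set of critical values of $\tilde f$ is closed (the critical set being a compact subset of $D$), every value $h'$ near $h$ is also regular; moreover $\S_h$ is disjoint from $\partial D$, where $\tilde f=0>h$, so it is a closed hypersurface contained in the interior of $D$. By Lemma \ref{lem-Vnonin} we then have $\V(h')=\mathcal H^2(\S_{h'})$ for all $h'$ in a neighborhood of $h$, and the implicit function theorem together with the $C^4$ regularity of $\tilde f$ shows that $h'\mapsto\mathcal H^2(\S_{h'})$ is $C^1$ near $h$. Flowing $\S_h$ along the vector field $D\tilde f/|D\tilde f|^2$ advances $\tilde f$ at unit rate, so this foliation has normal speed $1/|D\tilde f|$, and since $\S_h$ is closed the first variation of area has no boundary term and gives
\bee
\V'(h)=\int_{\S_h}\frac{H_{\S_h}}{|D\tilde f|}\,d\vol_{\S_h},
\eee
where $H_{\S_h}=\dv_{\R^3}(D\tilde f/|D\tilde f|)$ is the same scalar mean curvature occurring in \eqref{eq-mest}. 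The hypothesis that $\S_h$ is strictly mean convex with respect to $-D\tilde f/|D\tilde f|$ says precisely that $H_{\S_h}>0$ on all of $\S_h$, and since $|D\tilde f|>0$ on the regular level set $\S_h$ it follows that $\V'(h)>0$.

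Next I would decompose $\S_h=A\cup B$ with $A=\{x\in\S_h:|D\tilde f(x)|<\alpha\}$ and $B=\{x\in\S_h:|D\tilde f(x)|\ge\alpha\}$. On $A$, from $|D\tilde f|<\alpha$ and $H_{\S_h}>0$ one gets
\bee
\int_A H_{\S_h}\,d\vol_{\S_h}\ \le\ \alpha\int_A\frac{H_{\S_h}}{|D\tilde f|}\,d\vol_{\S_h}\ \le\ \alpha\,\V'(h),
\eee
the first inequality being strict whenever $A$ has positive measure. On $B$, from $|D\tilde f|^2/(1+|D\tilde f|^2)\ge\alpha^2/(1+\alpha^2)$ together with $H_{\S_h}>0$ and \eqref{eq-mest},
\bee
\m(\tilde f)\ \ge\ \int_{\S_h}\frac{|D\tilde f|^2}{1+|D\tilde f|^2}\,H_{\S_h}\,d\vol_{\S_h}\ \ge\ \frac{\alpha^2}{1+\alpha^2}\int_B H_{\S_h}\,d\vol_{\S_h},
\eee
so that $\int_B H_{\S_h}\,d\vol_{\S_h}\le(1+\alpha^{-2})\,\m(\tilde f)$. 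Adding the two estimates gives $\int_{\S_h}H_{\S_h}\,d\vol_{\S_h}<\alpha\,\V'(h)+(1+\alpha^{-2})\,\m(\tilde f)$, the inequality being strict because either the $A$-estimate is strict (when $A$ has positive measure) or the summand $\alpha\,\V'(h)>0$ (when $A$ is null); rearranging this is exactly \eqref{eq-Vder1}.

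I expect the main obstacle to be the first step: verifying carefully that $\V$ is differentiable at the regular value $h$ with $\V'(h)$ given by the first variation formula, and matching the sign and normalization conventions so that the scalar $H_{\S_h}$ appearing there is literally the one in \eqref{eq-mest} and in the strict mean convexity condition in the definition of $\G$. After that, the splitting argument is entirely elementary.
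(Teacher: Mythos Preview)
Your proof is correct and follows essentially the same approach as the paper: compute $\V'(h)$ via the first variation formula as $\int_{\S_h} H_{\S_h}/|D\tilde f|$, split $\S_h$ into $\{|D\tilde f|<\alpha\}$ and $\{|D\tilde f|\ge\alpha\}$, and combine the trivial bound on the first piece with \eqref{eq-mest} on the second. Your write-up is in fact slightly more careful than the paper's about the differentiability of $\V$ at $h$ and about why the final inequality is strict.
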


\begin{proof}
Applying  (\ref{eq-mest}), for any $\a \geq 0$ we get
\begin{align}\label{eq-malpha}
\m(\tilde f) &\geq  \int_{\S_h} H_{\S_h} \frac{|D\tilde f|^2}{1+|D\tilde f|^2}\, d\vol_{\S_h} \\
&\geq \frac{\a^2}{\a^2+1} \int_{\S_h \cap \{ |D\tilde f| \geq \a \}}H_{\S_h} \, d\vol_{\S_h}. \nonumber
\end{align}

Since $h$ is a regular value of $\tilde f$, $|D\tilde f| \neq 0$ in a neighborhood of $\S_h$. Moreover, $\V$ is finite from Lemma \ref{lem-Vnonin}.
Hence, the first variation formula for the volume of the level sets of $\tilde f$ applies, 
\bee
\tfrac{d}{dt}|_{t=h} \int_{\S_t} d\vol_{\S_t} =  -  \int_{\S_h}  {\bf{H}}_{\S_h} \cdot \frac{D\tilde f}{|D\tilde f|}.
\eee
By hypothesis, ${\bf H_{\S_h}} \cdot -\frac{D\tilde f}{|D\tilde f|} >0$. As in \cite[Lemma 3.4]{HL} we get, 
\begin{align*}
\V'(h) &= \int_{\S_h} \frac{H_{\S_h}}{|D\tilde f|} \, d\vol_{\S_h} \\
&= \int_{\S_h  \cap \{ |D\tilde f| \geq \a \}} \frac{H_{\S_h}}{|D\tilde f|} \, d \vol_{\S_h} +  \int_{\S_h  \cap \{ |D\tilde f| < \a \}} \frac{H_{\S_h}}{|D\tilde f|} \, d\vol_{\S_h}  \\
&> \frac{1}{\a} \int_{\S_h  \cap \{ |D\tilde f| < \a \}} H_{\S_h} \, d\vol_{\S_h}   \\
&=\frac{1}{\a} \left(\int_{\S_h} H_{\S_h} \, d \vol_{\S_h}  -   \int_{\S_h  \cap \{ |D\tilde f| \geq \a \}} H_{\S_h} \, d\vol_{\S_h}  \right) \\
&\geq \frac{1}{\a} \left(\int_{\S_h} H_{\S_h} \, d\vol_{\S_h}  - (1+\a^{-2})\m(\tilde f)  \right),
\end{align*} 
where in the last inequality we used (\ref{eq-malpha}). 
\end{proof}

\begin{lemma} \label{lem-diffineq}
Let $\tilde f:D \to \R$ be a lifted  function   such that  
almost every level set is outer minimizing with $\tilde f|_{\partial D}=0$ and $\m(\tilde f) > 0$. Suppose $h$ is a regular value of $\tilde f$ such that $\S_h$ is strictly mean convex with respect to $-\frac{D\tilde f}{|D\tilde f|}$ and it is outer minimizing and, $\V(h) > \frac{\m(f)^2}{16 \pi}$.  Then,
\be \label{eq-diffineq}
\V'(h) >  \frac{2}{3 \sqrt{3}} \m(\tilde f) \left[ \frac{4 \sqrt{\pi} \V(h)^{1/2}}{\m(\tilde f)} - 1 \right]^{3/2}.
\ee
\end{lemma}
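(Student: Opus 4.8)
The plan is to combine the differential inequality from Lemma \ref{lemma-vol-estimate1} with the Minkowski inequality for outer minimizing hypersurfaces in $\R^3$, using the bound $\V(h) > \m(f)^2/(16\pi)$ to guarantee the right-hand side is positive. First I would recall that Lemma \ref{lemma-vol-estimate1} gives, for every admissible $\a > 0$,
\be
\V'(h) > \frac{1}{\a}\left( \int_{\S_h} H_{\S_h}\, d\vol_{\S_h} - (1+\a^{-2})\m(\tilde f) \right).
\ee
The term $\int_{\S_h} H_{\S_h}\, d\vol_{\S_h}$ is the total mean curvature of a closed surface in $\R^3$ (up to the convention factor, so that $\int H$ here equals twice the usual mean-curvature integral, matching the normalization under which round spheres have $\int_{S^2_r} H = 8\pi r$); since $\tilde\S_h$ is outer minimizing, the Minkowski inequality (as used in \cite{HL}) yields $\int_{\S_h} H_{\S_h}\, d\vol_{\S_h} \geq 4\sqrt{\pi}\,\mathcal H^2(\S_h)^{1/2} = 4\sqrt{\pi}\,\V(h)^{1/2}$, the last equality because $h$ is a regular value so $\V(h) = \mathcal H^2(\S_h)$. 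Substituting,
\be
\V'(h) > \frac{1}{\a}\left( 4\sqrt{\pi}\,\V(h)^{1/2} - (1+\a^{-2})\m(\tilde f) \right)
\ee
for every $\a > 0$.

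Next I would optimize over $\a$. Writing $B := 4\sqrt{\pi}\,\V(h)^{1/2} - \m(\tilde f) > 0$ (positive precisely by the hypothesis $\V(h) > \m(f)^2/(16\pi)$, since that is equivalent to $4\sqrt{\pi}\V(h)^{1/2} > \m(\tilde f)$), the right-hand side is $\a^{-1}B - \a^{-3}\m(\tilde f)$. Maximizing $g(\a) = B/\a - \m(\tilde f)/\a^3$ over $\a > 0$: setting $g'(\a) = -B/\a^2 + 3\m(\tilde f)/\a^4 = 0$ gives $\a^2 = 3\m(\tilde f)/B$, i.e. $\a = \sqrt{3\m(\tilde f)/B}$, and plugging back, $g(\a) = B/\a - \m(\tilde f)/\a^3 = \frac{B}{\a}(1 - \m(\tilde f)/(\a^2 B)) = \frac{B}{\a}(1 - \tfrac13) = \frac{2B}{3\a}$. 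With $1/\a = \sqrt{B/(3\m(\tilde f))}$ this equals $\frac{2B}{3}\sqrt{\frac{B}{3\m(\tilde f)}} = \frac{2}{3\sqrt3}\frac{B^{3/2}}{\m(\tilde f)^{1/2}}$. Therefore
\be
\V'(h) > \frac{2}{3\sqrt3}\,\frac{B^{3/2}}{\m(\tilde f)^{1/2}} = \frac{2}{3\sqrt3}\,\m(\tilde f)\left( \frac{B}{\m(\tilde f)} \right)^{3/2} = \frac{2}{3\sqrt3}\,\m(\tilde f)\left[ \frac{4\sqrt{\pi}\,\V(h)^{1/2}}{\m(\tilde f)} - 1 \right]^{3/2},
\ee
which is exactly \eqref{eq-diffineq}. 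One must check that the optimizing $\a$ is indeed an admissible value in Lemma \ref{lemma-vol-estimate1} (it requires only $\a > 0$ and that $h$ be a regular value with $\S_h$ strictly mean convex, which are among the hypotheses here), and that the strict inequality is preserved — it is, since Lemma \ref{lemma-vol-estimate1} is already strict for each fixed $\a$.

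The main obstacle I anticipate is justifying the Minkowski inequality step with the precise normalization constant. The Minkowski inequality $\int_\Sigma H \geq 4\sqrt{\pi}\,|\Sigma|^{1/2}$ for surfaces in $\R^3$ holds for outer minimizing hypersurfaces (or mean-convex ones via the work of Guan--Li and others), and this is why conditions (2)--(4) in the definition of $\G$ — outer minimizing and strict mean convexity of the level sets — are imposed; I would cite the same source as \cite{HL}. Care is needed that the mean-curvature normalization in \eqref{eq-mest} and in $\V'(h) = \int_{\S_h} H_{\S_h}/|D\tilde f|$ is consistent with the normalization in the Minkowski inequality being invoked (with $H$ the sum of principal curvatures, round spheres of radius $r$ satisfy $\int H = 8\pi r$ and $|\Sigma| = 4\pi r^2$, so $\int H = 4\sqrt{\pi}|\Sigma|^{1/2}$ with equality — consistent). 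Everything else is the elementary single-variable optimization above.
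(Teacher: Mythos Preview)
Your proof is correct and follows essentially the same route as the paper: apply the Minkowski inequality $\int_{\S_h} H_{\S_h}\,d\vol_{\S_h}\geq 4\sqrt{\pi}\,\V(h)^{1/2}$ to the differential inequality from Lemma~\ref{lemma-vol-estimate1}, then optimize the resulting expression over $\a>0$. The paper cites \cite[Theorem 2 (b)]{FS} for the Minkowski step and arrives at the same optimal $\a=\sqrt{3\m(\tilde f)/B}$; your computation is simply a slightly more detailed version of theirs.
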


\begin{proof}
Since $\S_h$ is strictly mean convex and outer minimizing, the Minkowski inequality holds,  see \cite[Theorem 2 (b)]{FS}. Thus, we have
$$\int_{\S_h} H_{\S_h}  d\vol_{\S_h} \geq C_n \V(h)^{\tfrac{n-2}{n-1}}.$$
Inserting the Minkowski inequality into \eqref{eq-Vder1} with $n=3$ yields, $C_n= 4 \sqrt{\pi}$ and 
\be \label{eq-Vder2}
\V'(h) > \frac{1}{\a} \left( 4\sqrt{\pi} \V(h)^{1/2} - (1+\a^{-2})\m(\tilde f)  \right),
\ee
for any $\a > 0$.

Now we consider the right hand side of \eqref{eq-Vder2} as a function depending on $\a \in (0,\infty)$.
We see that a global maximum is attained at
\bee
\a = \left[  \frac{3 \m(\tilde f)}{4 \sqrt{\pi} \V(h)^{1/2} - \m(\tilde f)}  \right]^{1/2}. 
\eee
Note that $\a$ is well defined since  $\V(h) > \frac{\m(\tilde f)^2}{16 \pi}$.
Plugging $\a$ into (\ref{eq-Vder2}), we obtain the desired estimate,
\bee
\V'(h) > \frac{2}{3 \sqrt{3}} \m(\tilde f) \left[ \frac{4 \sqrt{\pi} \V(h)^{1/2}}{\m(\tilde f)} - 1 \right]^{3/2}. 
\eee
\end{proof}

\begin{definition}\label{def-h0}
Let $\xi > 0$.  For any function $\tilde f:D \to \R$ such that for almost every value $h$ of $\tilde{f}$ the level set $\tilde \S_h$ is outer minimizing we define
$$A_{\xi}:= \left\{h: \mathcal V(h)\leq (1+\xi)^2\frac{\m(\tilde f)^2}{16\pi}\right\}$$ 
and
\bee
h_0(\xi) = 
\begin{cases}
 \sup \{ h \, | \, 
 h\in A_{\xi}
 \}  \quad&  \text{if $A_{\xi}\neq \emptyset$}  \medskip \\
\min \tilde f     \quad&  \text{otherwise.} \\
\end{cases}
\eee
\end{definition}

We fix $\xi \geq 1$ in the previous definition and write $h_0$ instead of $h_0(\xi)$.
It follows that for $\tilde f$ non constant either
\begin{enumerate}
\item  $h_0  \in (\min \tilde f, \max  \tilde f]$ and thus 
\begin{align*}
\tilde \Omega_{h_0}\neq \emptyset \ \ \ \&\ \ \ D\setminus \tilde \Omega_{h_0} \neq \emptyset
\end{align*}
\item  $h_0= \min \tilde f$ and thus  
$$\tilde \Omega_{h_0}=\emptyset  \ \ \ \&\ \ \ D\setminus \tilde \Omega_{h_0} \neq \emptyset,$$ 
\end{enumerate}
where $\tilde \Omega_{h_0}= \tilde f^{-1}(-\infty, 0)$.

Similarly to \cite[Theorem 3.10]{HL}, we prove that when $h_0 \neq \min \tilde f$ then $|h_0|=\max \tilde f - h_0$ can be bounded in terms of $\m(\tilde f)$. First, we state an ODE comparison lemma. 

\begin{lemma}[{\cite[Lemma 3.9]{HL}}]\label{lem-3.9HL}
Let $V: [a,b] \to \R$ be a non decreasing function and $F:  [a,b] \to \R$ a continuously differentiable and non decreasing function such that 
$V' \geq F(V)$ almost everywhere in $[a,b]$. Let $Y:[a,b] \to \R$ be a $C^2$ function satisfying $Y'=F(Y)$ and $Y(a) \leq V(a)$. 
Then $Y \leq V$ on $[a,b]$. 
\end{lemma}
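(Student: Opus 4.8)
The plan is to argue by contradiction, via a ``first crossing point'' argument reinforced by Gronwall's inequality; the only delicate point is that $V$ is merely monotone, hence differentiable only almost everywhere and not necessarily absolutely continuous, so one cannot recover $V$ by integrating $V'$. First I would collect the facts I need. Being non-decreasing on the compact interval $[a,b]$, the function $V$ is bounded and, by Lebesgue's theorem, differentiable almost everywhere, and it satisfies the one-sided estimate $\int_s^t V'(r)\,dr \le V(t)-V(s)$ for $a\le s\le t\le b$. Combining this with the hypothesis $V'\ge F(V)$ a.e.\ yields the weak differential inequality
\[
V(t)-V(s)\ \ge\ \int_s^t F(V(r))\,dr\qquad(a\le s\le t\le b),
\]
while the identity $Y'=F(Y)$ (with $Y\in C^2$) gives $Y(t)-Y(s)=\int_s^t F(Y(r))\,dr$. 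Finally, since $F$ is $C^1$ it is Lipschitz on every compact interval; I fix a Lipschitz constant $L$ for $F$ on a compact interval containing the (bounded) ranges of $V$ and $Y$.

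Now suppose $Y(t_*)>V(t_*)$ for some $t_*\in(a,b]$ and set $c=\sup\{t\in[a,b]:V(s)\ge Y(s)\text{ for all }s\in[a,t]\}$. Using that $V$ is non-decreasing (so $V(c)\ge\lim_{s\to c^-}V(s)$) together with the continuity of $Y$, one checks that $V\ge Y$ on the closed interval $[a,c]$; by assumption $c<b$, so $(c,b]$ contains points arbitrarily close to $c$ at which $V<Y$. Put $u(t):=\max\{0,\,Y(t)-V(t)\}\ge 0$ on $[c,b]$, so $u(c)=0$. Subtracting the two (in)equalities above over the interval $[c,t]$ and using $V(c)\ge Y(c)$ gives
\[
V(t)-Y(t)\ \ge\ \int_c^t\big(F(V(r))-F(Y(r))\big)\,dr .
\]
Here the monotonicity of $F$ enters: the integrand is nonnegative wherever $V(r)\ge Y(r)$ and is $\ge -L\,u(r)$ wherever $V(r)<Y(r)$, hence $\ge -L\,u(r)$ throughout. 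Therefore $u(t)\le L\int_c^t u(r)\,dr$ on $[c,b]$, and Gronwall's inequality forces $u\equiv 0$ there, contradicting the existence of points of $(c,b]$ near $c$ with $u>0$. Hence $Y\le V$ on $[a,b]$.

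I expect the only real obstacle to be this low regularity of $V$: since $V$ need not be absolutely continuous, the fundamental theorem of calculus is available only in the one-sided form $\int V'\le\Delta V$, which is exactly why the proof is organized around the weak differential inequality and around the left-limit/monotonicity bookkeeping at the crossing point $c$. The remaining ingredients are routine: the hypothesis that $F$ is non-decreasing lets us discard the favorable part of the integrand, and $F\in C^1$ supplies the Lipschitz bound controlling the unfavorable part before Gronwall is applied.
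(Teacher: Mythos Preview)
The paper does not prove this lemma; it is quoted verbatim from \cite[Lemma~3.9]{HL} and used as a black box in the proof of Lemma~\ref{lemma-h0-estimate}. So there is no ``paper proof'' to compare against, and your task reduces to supplying a correct argument.

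Your argument is correct. The key observation---that monotonicity of $V$ gives the one-sided estimate $\int_s^t V'\le V(t)-V(s)$ even without absolute continuity, whence the weak integral inequality $V(t)-V(s)\ge\int_s^t F(V(r))\,dr$---is exactly the point, and your Lipschitz/Gronwall treatment of the difference $u=\max\{0,Y-V\}$ is clean. One small simplification: the contradiction setup and the first-crossing time $c$ are unnecessary. Since $Y(a)\le V(a)$ is already given, you may run the very same Gronwall step on $[a,b]$ from the start (taking $c=a$) and conclude $u\equiv 0$ directly, without ever assuming a crossing exists. Also note that the stated domain $F:[a,b]\to\R$ is a typographical slip in the paper (one needs $F$ defined on an interval containing the ranges of $V$ and $Y$); you handled this correctly by fixing a Lipschitz constant on such a compact interval.
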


\begin{lemma} \label{lemma-h0-estimate}
Let $f:T \to \R$ be a smooth function with $\max_T f=\tilde f_{\partial D}=0$. Suppose that for almost every $h \in \tilde f(D)$, the level set $\tilde \S_h$ is strictly mean convex and outer minimizing.  
If  $\m(\tilde f) > 0$, then
\bee
|h_0| < C (\mathcal{H}^2(\partial D))^{1/4} \m(\tilde f)^{1/2}.
\eee
\end{lemma}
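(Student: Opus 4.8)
The plan is to compare the level-set volume function $\V$ --- which by Lemma~\ref{lem-Vnonin} is non-decreasing and bounded above by $\mathcal H^2(\partial D)$ --- with the solution of the ordinary differential equation extracted from Lemma~\ref{lem-diffineq}, using the comparison Lemma~\ref{lem-3.9HL}. Since $\m(\tilde f)>0$, the function $\tilde f$ is non-constant, so $\max\tilde f=0>\min\tilde f$. First I would check that $A_\xi\neq\emptyset$: by the coarea formula $\int_{\min\tilde f}^{\min\tilde f+\veps}\mathcal H^2(\tilde{\S}_h)\,dh=\int_{\tilde{\Omega}_{\min\tilde f+\veps}}|D\tilde f|\,d\vol\to 0$ as $\veps\to 0$, so there are regular values $h$ arbitrarily close to $\min\tilde f$ with $\V(h)=\mathcal H^2(\tilde{\S}_h)$ arbitrarily small, and every such $h$ lies in $A_\xi$. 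Hence $h_0=\sup A_\xi\in(\min\tilde f,0]$; if $h_0=0$ then $|h_0|=0$ and there is nothing to prove, so from now on assume $h_0<0$.

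The point of the choice $\xi\ge1$ is that for every $h\in(h_0,0)$ we have $h\notin A_\xi$, hence $\V(h)>(1+\xi)^2\tfrac{\m(\tilde f)^2}{16\pi}\ge\tfrac{\m(\tilde f)^2}{16\pi}$, so that Lemma~\ref{lem-diffineq} is applicable there. Combined with Sard's theorem and the hypothesis that almost every level set is strictly mean convex and outer minimizing, it follows that for almost every $h\in(h_0,0)$
\begin{equation*}
\V'(h)>F(\V(h)),\qquad F(v):=\tfrac{2}{3\sqrt3}\,\m(\tilde f)\left[\tfrac{4\sqrt{\pi}\,v^{1/2}}{\m(\tilde f)}-1\right]^{3/2},
\end{equation*}
where $F$ is non-negative, $C^1$ and non-decreasing on $[\tfrac{\m(\tilde f)^2}{16\pi},\infty)$. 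I would then fix regular values $a,b$ with $h_0<a<b<0$ and let $Y:[a,b]\to\R$ solve $Y'=F(Y)$, $Y(a)=\V(a)$; since $F(v)\le C'\,\m(\tilde f)^{-1/2}v^{3/4}$ the solution does not blow up and exists on all of $[a,b]$. Lemma~\ref{lem-3.9HL} then gives $Y\le\V$ on $[a,b]$, and in particular $Y(b)\le\V(b)\le\mathcal H^2(\partial D)$.

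To extract the quantitative bound, note that for $v\ge(1+\xi)^2\tfrac{\m(\tilde f)^2}{16\pi}$ with $\xi\ge1$ one has $\tfrac{4\sqrt{\pi}\,v^{1/2}}{\m(\tilde f)}\ge2$, hence $\tfrac{4\sqrt{\pi}\,v^{1/2}}{\m(\tilde f)}-1\ge\tfrac12\cdot\tfrac{4\sqrt{\pi}\,v^{1/2}}{\m(\tilde f)}$ and therefore $F(v)\ge c_0\,\m(\tilde f)^{-1/2}v^{3/4}$ for a numerical constant $c_0>0$. Thus $Y'\ge c_0\,\m(\tilde f)^{-1/2}Y^{3/4}$, i.e. $\tfrac{d}{dh}(Y^{1/4})\ge\tfrac{c_0}{4}\m(\tilde f)^{-1/2}$, and integrating over $[a,b]$ together with $Y(a)=\V(a)>\tfrac{\m(\tilde f)^2}{4\pi}>0$ gives
\begin{equation*}
\big(\mathcal H^2(\partial D)\big)^{1/4}\ge Y(b)^{1/4}\ge Y(a)^{1/4}+\tfrac{c_0}{4}\,\m(\tilde f)^{-1/2}(b-a)>\tfrac{c_0}{4}\,\m(\tilde f)^{-1/2}(b-a).
\end{equation*}
Hence $b-a<\tfrac{4}{c_0}\,\m(\tilde f)^{1/2}\big(\mathcal H^2(\partial D)\big)^{1/4}$ for all such $a,b$; since moreover $Y(a)^{1/4}$ is bounded below uniformly by $(\m(\tilde f)^2/4\pi)^{1/4}$, letting $a\downarrow h_0$ and $b\uparrow0$ yields $|h_0|=-h_0<\tfrac{4}{c_0}\,\m(\tilde f)^{1/2}\big(\mathcal H^2(\partial D)\big)^{1/4}$, which is the claim with $C=4/c_0$.

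The main obstacle is the measure-theoretic bookkeeping that makes the comparison legitimate: one must verify that $\V$ is differentiable with $\V'>F(\V)$ on a full-measure subset of $(h_0,0)$ --- assembling the monotonicity and finiteness of $\V$ from Lemma~\ref{lem-Vnonin}, the first variation formula at regular values as in the proof of Lemma~\ref{lemma-vol-estimate1}, Sard's theorem, and the almost-everywhere mean-convexity and outer-minimizing hypotheses --- and that the definition of $h_0$ with $\xi\ge1$ indeed forces $\V>\tfrac{\m(\tilde f)^2}{16\pi}$ on all of $(h_0,0)$, which is precisely the hypothesis Lemma~\ref{lem-diffineq} needs. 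Once those points are secured, what remains is the routine integration of a separable ODE against the uniform bound $\V\le\mathcal H^2(\partial D)$.
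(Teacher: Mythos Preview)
Your proof is correct and follows the same overall strategy as the paper: use the differential inequality of Lemma~\ref{lem-diffineq} on $(h_0,0)$, compare $\V$ with a solution $Y$ of the associated ODE via Lemma~\ref{lem-3.9HL}, and conclude using the bound $\V\le\mathcal H^2(\partial D)$ from Lemma~\ref{lem-Vnonin}. The tactical difference is in how the ODE is handled. The paper integrates $Y'=\tfrac{2}{3\sqrt3}\m\big[\tfrac{4\sqrt\pi\,Y^{1/2}}{\m}-1\big]^{3/2}$ exactly by separation of variables, obtains a closed-form antiderivative, and then simplifies to extract the explicit constant $C=\tfrac{3\sqrt3}{4\pi}\sqrt{\sqrt\pi}$; it also starts the comparison directly at $h_0$ with $Y(h_0)=(1+\xi)^2\tfrac{\m^2}{16\pi}$. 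You instead observe that for $v\ge(1+\xi)^2\tfrac{\m^2}{16\pi}$ one has $F(v)\ge c_0\,\m^{-1/2}v^{3/4}$, reduce to the trivial inequality $(Y^{1/4})'\ge\tfrac{c_0}{4}\m^{-1/2}$, and work on subintervals $[a,b]\subset(h_0,0)$. Your route is a bit more elementary and sidesteps both the explicit (and somewhat delicate) antiderivative and the minor issue of whether $\V(h_0)$ itself meets the threshold; the cost is only that your constant $C=4/c_0$ is less explicit than the paper's.
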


\begin{proof}
If $h_0=\max f =0$ then there is nothing to prove.  Assume $h_0 < 0$.  Consider the differential equation
\be\label{eq-Y}
\begin{cases}
Y'(h)  = \frac{2}{3 \sqrt{3}} \m(\tilde f) \left[ \frac{4 \sqrt{\pi} Y(h)^{1/2}}{\m(\tilde f)} - 1 \right]^{3/2}  \medskip \\
Y(h_0 ) = (1+\xi)^2 \frac{\m(\tilde f)^2}{16\pi}
\end{cases}
\ee
This equation has a $C^2$ solution. 
Now from the definition of $h_0$,  for any regular value $h \geq h_0 $ of $\tilde f$,  $\V(h) \geq (1+\xi)^2 \frac{\m(\tilde f)^2}{16\pi}  >   \frac{\m(\tilde f)^2}{16\pi} $.  Hence, the differential inequality of Lemma \ref{lem-diffineq} holds for almost every $h \geq h_0 $. Thus, the hypotheses of Lemma \ref{lem-3.9HL} are satisfied. Then for all $h \geq h_0 $ it holds,
$$Y(h) \leq \V(h).$$ 

From the previous inequality we obtain an upper estimate for $|h_0 |$. 
Using separation of variables and integrating from $h_0 $ to $h$ we get
\bee
\frac{\m(\tilde f)(4\sqrt{\pi} \sqrt{Y} - 2\m(\tilde f))}{4\pi \sqrt{\frac{4 \sqrt{\pi} \sqrt{Y}}{\m(\tilde f)}-1}} \bigg|_{Y(h_0 )=  (1+\xi)^2 \frac{\m(\tilde f)^2}{16\pi} }^{Y(h)} = \frac{2}{3 \sqrt{3}} \m(\tilde f) (h -h_0 ).
\eee
Evaluating the previous expression we get
\bee
\frac{\sqrt{\m(\tilde f)}(4\sqrt{\pi} \sqrt{Y(h)} - 2\m(\tilde f))}{4\pi \sqrt{ 4\sqrt{\pi} \sqrt{Y(h)}-\m(\tilde f)}}   -\frac{ (\xi-1)\m(\tilde f)}{4\pi \sqrt{\xi}} = \frac{2}{3 \sqrt{3}} (h -h_0 ).
\eee

We now obtain an upper bound for the right hand side of the previous inequality.  First, 
since $\m(\tilde f)>0$ and $\xi\geq 1$, we can get rid of the second term on the left hand side. Second, 
since $\m(\tilde f)>0$,  get  $ 4\sqrt{\pi} \sqrt{Y(h)} - \m(\tilde f) > 4\sqrt{\pi} \sqrt{Y(h)} - 2\m(\tilde f)=  (4\sqrt{\pi} \sqrt{Y(h)} - \m(\tilde f))- \m(\tilde f)$. Hence, 
\bee
\frac{\sqrt{\m(\tilde f)}}{4\pi }  \sqrt{ 4\sqrt{\pi} \sqrt{Y(h)}-\m(\tilde f)}   \geq \frac{2}{3 \sqrt{3}} (h -h_0 ).
\eee
Once more, since $\m(\tilde f)>0$,
\bee
\frac{\sqrt{\m(\tilde f)}}{4\pi }  \sqrt{ 4\sqrt{\pi} \sqrt{Y(h)}}   \geq \frac{2}{3 \sqrt{3}} (h -h_0 ).
\eee
Recalling that $Y(h) \leq \V(h)$ for all $h \geq h_0 $, 
\bee
\frac{\sqrt{\m(\tilde f)}}{4\pi }  \sqrt{ 4\sqrt{\pi} \sqrt{\V(h)}}   \geq \frac{2}{3 \sqrt{3}} (h -h_0 ).
\eee
From Lemma \ref{lem-Vnonin} we know that $\V(h)\leq \mathcal{H}^2(\partial D)$. It follows, 
\bee
\frac{\sqrt{\m(\tilde f)}}{4\pi }   \sqrt{ 4\sqrt{\pi} \sqrt{\mathcal{H}^2(\partial D)}}   \geq \frac{2}{3 \sqrt{3}}|h_0 |.
\eee
Thus, 
\be
|h_0| < C   \sqrt[4]{\mathcal{H}^2(\partial D)} \sqrt{\m(\tilde f)},
\ee
with $C=\frac{3\sqrt{3}}{8\pi}\sqrt{4\sqrt{\pi}}= \frac{3\sqrt{3}}{4\pi}\sqrt{\sqrt{\pi} }  $.
\end{proof}

\begin{remark}
We can prove a statement similar to Lemma \ref{lemma-h0-estimate} for $n >3$.  Indeed,
note that Lemma \ref{lemma-vol-estimate1} holds independently of having chosen $n=3$.
Then in Lemma \ref{lem-diffineq} we would have to use the Minkowski inequality for the appropriate dimension $n >3$.
This would lead us to an ordinary differential equation similar to (\ref{eq-Y}).  Finally, the solution to this new equation together with analogous 
calculations to the ones we performed in the proof of Lemma \ref{lemma-h0-estimate} appear in \cite[Lemma 3.4]{ACS}.  Hence, for $n>3$ we also have 
$|h_0| \leq c(\m(f),  \mathcal H^n(\bdry D),n)$ such that $c(\m(f), \mathcal H^n(\bdry D),n) \to 0$ as $\m(f)  \to 0$.  Promoting our results to higher dimensions.
\end{remark}

\subsection{Volume Estimates}\label{ssec-volEst2}

In this subsection we estimate $\vol_{g_M}(\Omega_{h_0})$ and $\vol_{g_M}(M \setminus \Omega_{h_0})$ for graph tori $M \in \M$.  Here, $\Omega_{h_0}= f^{-1}(-\infty, h_0)$ and $h_0$ is as in Definition \ref{def-h0}.  Hence, we obtain an upper bound for the volume of $M$.  From the definition of $g_M$ we also get a lower bound. See Corollary \ref{cor:limsup}.  From this estimates it will follow that the volume function is continuous with respect to flat and intrinsic flat convergence for the theorems stated in Section \ref{sec-intro}.

\bigskip

From the previous subsection if $f \in \G$ either 
\begin{itemize}
\item $\m(f)=0$ and then $M=T$. Hence, $\vol(M)=\vol(T)$, or
\item  $\m(f) \neq 0$ and there exists $|h_0| \leq C  \mathcal H^2(\bdry D)^{5/4}\m(f)^{1/2}$ that either
\begin{enumerate}
\item  $h_0  \in (\min f, \max f]$ and thus 
\begin{align*}
\Omega_{h_0}, M\setminus \Omega_{h_0} \neq \emptyset,\\
\V(h) \leq (1+\xi)^2 \tfrac{\m(f)^2}{16\pi}  \,\,\text{for}\,\, h \leq h_0, \\
\end{align*}
\item  $h_0= \min f$ and thus  
$$\Omega_{h_0}=\emptyset, \,\,\,  M\setminus \Omega_{h_0} \neq \emptyset.$$ 
\end{enumerate}
\end{itemize}

\begin{lemma}\label{lem-volOmega}
Let $(M,g_M) \in \M$ arise from $(T,f)$ with $\m(f) >0$.
Then
\begin{align*}
\vol_{g_M}(\Omega_{h_0})\leq \frac{(1+\xi)^{3}}{6\sqrt \pi} \frac{\m(f)^{3}}{(16\pi)^{\frac{3}{2}}}+  (1+\xi)^2\frac{\m(f)^2}{16\pi}
 |\min f|.
\end{align*}
Here $h_0$ is given as in Definition \ref{def-h0}. 
\end{lemma}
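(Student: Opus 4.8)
The plan is to bound $\vol_{g_M}(\Omega_{h_0})$ directly, after lifting to the fundamental domain $D\subset\R^3$. Since $g_M = g_T+df\otimes df$ has volume element $\sqrt{1+|Df|^2}\,d\vol_{g_T}$, the elementary inequality $\sqrt{1+t^2}\le 1+t$ for $t\ge 0$ lets me write
\[
\vol_{g_M}(\Omega_{h_0}) \le \vol_{g_T}(\Omega_{h_0}) + \int_{\Omega_{h_0}} |Df|\, d\vol_{g_T},
\]
and I will control the two terms separately. First I reduce to the nontrivial case: by the discussion preceding the lemma, either $h_0 = \min f$, so that $\Omega_{h_0}=\emptyset$ and there is nothing to prove, or $h_0\in(\min f,\max f]$, in which case $\V(h)\le(1+\xi)^2\tfrac{\m(f)^2}{16\pi}$ for all $h\le h_0$. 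I will need this bound also at $h=h_0$: by Definition \ref{def-h0} there is a sequence in $A_\xi$ increasing to $h_0$, monotonicity of $\V$ gives the bound for all $h<h_0$, and left lower semicontinuity of $\V$ from Lemma \ref{lem-Vnonin} upgrades it to $\V(h_0)\le(1+\xi)^2\tfrac{\m(f)^2}{16\pi}$.

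Using $p^*g_T=g_{eucl}$ I identify $\Omega_{h_0}$ with $\tilde\Omega_{h_0}\subset D\subset\R^3$ and $f$ with $\tilde f$, so the first term becomes $\mathcal L^3(\tilde\Omega_{h_0})$ and the second $\int_{\tilde\Omega_{h_0}}|D\tilde f|\,d\mathcal L^3$. For the first I will invoke the sharp Euclidean isoperimetric inequality $\mathcal L^3(E)\le\tfrac{1}{6\sqrt\pi}\,P(E)^{3/2}$ (the constant being $(3\omega_3^{1/3})^{-3/2}=\tfrac{1}{6\sqrt\pi}$ with $\omega_3=\tfrac{4}{3}\pi$), applied to the bounded finite-perimeter set $E=\tilde\Omega_{h_0}$, whose perimeter equals $\V(h_0)\le(1+\xi)^2\tfrac{\m(f)^2}{16\pi}$; this reproduces exactly the summand $\tfrac{(1+\xi)^3}{6\sqrt\pi}\tfrac{\m(f)^3}{(16\pi)^{3/2}}$. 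For the second I will apply the coarea formula to $\tilde f$,
\[
\int_{\tilde\Omega_{h_0}}|D\tilde f|\,d\mathcal L^3 = \int_{\min f}^{h_0}\mathcal H^2(\tilde\S_h)\,dh = \int_{\min f}^{h_0}\V(h)\,dh,
\]
using $\mathcal H^2(\tilde\S_h)=\V(h)$ for a.e.\ $h$ by Sard's theorem, as in Lemma \ref{lem-Vnonin}. Since $\V(h)\le(1+\xi)^2\tfrac{\m(f)^2}{16\pi}$ for $h\le h_0$, and since $\max f=0$ forces $h_0-\min f\le -\min f=|\min f|$, this term is at most $(1+\xi)^2\tfrac{\m(f)^2}{16\pi}\,|\min f|$. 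Adding the two estimates yields the claimed inequality.

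I expect the argument to be essentially routine once these pieces are assembled; the only point requiring genuine care is the attainment statement $\V(h_0)\le(1+\xi)^2\tfrac{\m(f)^2}{16\pi}$, i.e.\ that the supremum defining $h_0$ belongs to $A_\xi$, which is exactly where the left lower semicontinuity of $\V$ (Lemma \ref{lem-Vnonin}) is essential. The remainder is bookkeeping: verifying that the isoperimetric constant $\tfrac{1}{6\sqrt\pi}$ together with the exponent $\tfrac{3}{2}$ produces the first summand, and that the coarea identity produces the second.
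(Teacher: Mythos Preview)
Your proof is correct and follows essentially the same approach as the paper: split $\sqrt{1+|Df|^2}\le 1+|Df|$, bound the flat volume $\vol_{g_T}(\Omega_{h_0})$ by the isoperimetric inequality applied to $\tilde\Omega_{h_0}\subset\R^3$, and bound the gradient term via the coarea formula together with $\V(h)\le(1+\xi)^2\tfrac{\m(f)^2}{16\pi}$ for $h\le h_0$. The only cosmetic difference is that the paper works at a regular value $h_1<h_0$ and passes to the limit via continuity of $h\mapsto\vol_{g_M}(\Omega_h)$, whereas you argue directly at $h_0$ by invoking the left lower semicontinuity of $\V$ to secure $\V(h_0)\le(1+\xi)^2\tfrac{\m(f)^2}{16\pi}$; both maneuvers are valid.
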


\begin{proof} 
Let $h_1<h_0$  be a regular value of $f$. The co-area formula yields
\begin{align*}
\vol_{g_M}(\Omega_{h_1})=  &  \int_{\Omega_{h_1}} \sqrt{1+|\nabla f|^2} d\vol_{g_T}\\
\leq & \vol_{g_T}(\Omega_{h_1})+ \int_{\min f}^{h_1}\mathcal{V}(h)dh.
\end{align*}
 By the
the isoperimetric inequality, the definition of $h_0$ (Definition \ref{def-h0}) and $\max f=0$, 
\begin{align*}
\vol_{g_M}(\Omega_{h_1})\leq &  \frac{1}{6\sqrt\pi} \mathcal{V}(h_1)^{\frac{3}{2}}+ (1+\xi)^2\frac{\m(f)^2}{16\pi} |\min f|\\
 \leq  & \frac{1}{6\sqrt\pi} \mathcal{V}(h_1)^{\frac{3}{2}}+  (1+\xi)^2 \frac{\m(f)^2}{16\pi} |\min f| \\
\leq  & \frac{(1+\xi)^{3}}{6\sqrt \pi} \frac{\m(f)^{3}}{(16\pi)^{\frac{3}{2}}}+ (1+\xi)^2\frac{\m(f)^2}{16\pi} |\min f|.
\end{align*}
By continuity of $h\mapsto \vol_{g_M}(\Omega_h)$ the claim follows.
\end{proof}

\begin{lemma}\label{lem-volOmegaC}
Let $(M,g_M)  \in \M$ arise from $(T,f)$ with $\m(f) >0$.
Then
\begin{align*}
\vol({M}\backslash \Omega_{h_0})\leq  \vol(T) + C \mathcal{H}^2(\partial D) ^{5/4}\m(f)^{\frac{1}{2}}.
\end{align*}
Here $C$ is the constant that appears in Lemma \ref{lemma-h0-estimate}.
\end{lemma}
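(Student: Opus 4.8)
The plan is to reduce the estimate to two facts already proved in this section --- the uniform bound $\V\le\mathcal H^2(\partial D)$ of Lemma \ref{lem-Vnonin} and the height estimate $|h_0|<C\,\mathcal H^2(\partial D)^{1/4}\m(f)^{1/2}$ of Lemma \ref{lemma-h0-estimate} --- with the co-area formula providing the bridge between the volume of $M\setminus\Omega_{h_0}$ and the level-set function $\V$.

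First, since $f\in\G$ we have $\max f=0$, so $M\setminus\Omega_{h_0}=f^{-1}[h_0,0]$ and $\max f-h_0=|h_0|$. Arguing as in the proof of Lemma \ref{lem-volOmega}, I would use $\sqrt{1+|\nabla f|^2}\le 1+|\nabla f|$ together with the co-area formula and the identity $\mathcal H^2(\S_h)=\V(h)$, valid for the regular values $h$ (a set of full measure in $[h_0,0]$ by Sard's theorem), to obtain
\begin{equation*}
\vol_{g_M}(M\setminus\Omega_{h_0})=\int_{f^{-1}[h_0,0]}\sqrt{1+|\nabla f|^2}\,d\vol_{g_T}\le \vol_{g_T}\big(f^{-1}[h_0,0]\big)+\int_{h_0}^{0}\V(h)\,dh.
\end{equation*}
Since $f^{-1}[h_0,0]\subset T$, the first term is at most $\vol_{g_T}(T)=\vol(T)$; and by Lemma \ref{lem-Vnonin}, $\V(h)\le\mathcal H^2(\partial D)$ for every $h$, so the second term is at most $|h_0|\,\mathcal H^2(\partial D)$.

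It remains to insert the height bound. Because $\m(f)=\m(\tilde f)>0$, Lemma \ref{lemma-h0-estimate} applies and gives $|h_0|<C\,\mathcal H^2(\partial D)^{1/4}\m(f)^{1/2}$ with the same constant $C$; this is valid in both possibilities for $h_0$ (namely $h_0\in(\min f,0]$ and $h_0=\min f$), since the conclusion of that lemma does not distinguish them. Multiplying by $\mathcal H^2(\partial D)$ yields $\int_{h_0}^{0}\V(h)\,dh<C\,\mathcal H^2(\partial D)^{5/4}\m(f)^{1/2}$, and adding $\vol(T)$ gives the claim.

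I do not expect any genuine obstacle here: the analytic content is entirely contained in Lemmas \ref{lem-Vnonin} and \ref{lemma-h0-estimate}, and the remaining work is the routine co-area estimate above, handled exactly as in the proof of Lemma \ref{lem-volOmega}.
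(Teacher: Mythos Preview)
Your proposal is correct and follows essentially the same route as the paper: both arguments use $\sqrt{1+|\nabla f|^2}\le 1+|\nabla f|$, the co-area formula, the bound $\V\le\mathcal H^2(\partial D)$ from Lemma~\ref{lem-Vnonin}, and the height estimate from Lemma~\ref{lemma-h0-estimate}. The only cosmetic difference is that the paper first proves the estimate for a nearby regular value $h_1\in(h_0-\eta,h_0)$ and then lets $h_1\to h_0$ by continuity of $h\mapsto\vol_{g_M}(\Omega_h)$, whereas you work directly with $h_0$; since the co-area formula does not require $h_0$ to be regular and Sard's theorem lets you replace $\mathcal H^2(f^{-1}(h))$ by $\V(h)$ on a set of full measure, your shortcut is legitimate.
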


\begin{proof} 
Let $h_1\in (h_0-\eta,h_0)$ be a regular value of $f$ for $\eta>0$ sufficiently small.  Then
\begin{align*}
\vol(M\backslash \Omega_{h_1})&=\int_{T \backslash \Omega_{h_1}} \sqrt{1+|\nabla f|^2} d\vol_{g_T}\\
&\leq \vol_{g_T}(T)+ \int_{h_1}^0 \mathcal{V}(h)dh \\
& \leq \vol_{g_T}(T) + |h_1| \mathcal{H}^2(\partial D),
\end{align*}
where in the last line we used the bound on $\V$ given by Lemma \ref{lem-Vnonin}.  Since $|h_1|\leq |h_0|+\eta$ and $|h_0|\leq C  \mathcal{H}^2(\partial D) ^{1/4}    \m(f)^{\frac{1}{2}}$ by Lemma \ref{lemma-h0-estimate}. It follows
\begin{align*}
\vol_{g_M}( M\backslash \Omega_{h_1})\leq \vol_{g_T}(T) + \left(C  \mathcal{H}^2(\partial D)^{1/4}\m(f)^{\frac{1}{2}} +\eta\right)\mathcal{H}^2(\partial D).
\end{align*}
We use continuity of $h\mapsto \vol_{g_M}(\Omega_h)$ and let $h_1\rightarrow h_0$. 
Hence, the previous estimate holds with $h_1$ replaced by $h_0$. Since $\eta>0$ was arbitrary, the claim follows.
\end{proof}

Putting the two previous lemmas together we obtain as a corollary

\begin{coro}\label{cor:limsup}
Let $(M,g_M)  \in \M$ arising from $(T,f)$. Then
\begin{align*}
\vol(T) \leq  \vol(M)  \leq   &     \frac{(1+\xi)^{3}}{6\sqrt \pi} \frac{\m(f)^{3}}{(16\pi)^{\frac{3}{2}}} +  (1+\xi)^2\frac{\m(f)^2}{16\pi}|\min f|\\
& + \vol(T) + C \mathcal H^{2}(\bdry D)^{5/4} \m(f)^{\frac{1}{2}}.
\end{align*}
\end{coro}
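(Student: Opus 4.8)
The plan is to obtain the corollary by simply combining Lemma~\ref{lem-volOmega}, Lemma~\ref{lem-volOmegaC}, and the elementary lower bound coming from the definition of the graph metric, splitting into the two cases dictated by whether $\m(f)=0$ or $\m(f)>0$. First I would record the lower bound $\vol(T)\leq \vol(M)$: since $M$ arises from $(T,f)$, we have $g_M=g_T+df\otimes df\geq g_T$ as bilinear forms, hence $d\vol_{g_M}=\sqrt{1+|\nabla f|^2}\,d\vol_{g_T}\geq d\vol_{g_T}$ pointwise, and integrating over $T$ gives $\vol(M)=\int_T\sqrt{1+|\nabla f|^2}\,d\vol_{g_T}\geq \vol_{g_T}(T)=\vol(T)$. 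This holds regardless of the sign of $\m(f)$ and needs no further input.

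For the upper bound I would first dispose of the case $\m(f)=0$. By Lemma~\ref{lem-DiamEst}(1) this forces $f\equiv 0$, so $M=T$ isometrically and $\vol(M)=\vol(T)$; the right-hand side of the asserted inequality then contains $\vol(T)$ plus nonnegative terms (all of $\m(f)$, $|\min f|$, $\mathcal H^2(\bdry D)$ are $\geq 0$, and with $\m(f)=0$ the extra terms simply vanish), so the inequality holds trivially. In the case $\m(f)>0$, the value $h_0$ of Definition~\ref{def-h0} is available, and $M$ decomposes as the disjoint union (up to a measure-zero level set $\S_{h_0}$) of $\Omega_{h_0}=f^{-1}(-\infty,h_0)$ and $M\setminus\Omega_{h_0}=f^{-1}[h_0,\max f)$. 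Therefore
\begin{align*}
\vol(M)=\vol_{g_M}(\Omega_{h_0})+\vol_{g_M}(M\setminus\Omega_{h_0}).
\end{align*}
Now I apply Lemma~\ref{lem-volOmega} to bound the first summand by $\tfrac{(1+\xi)^3}{6\sqrt\pi}\tfrac{\m(f)^3}{(16\pi)^{3/2}}+(1+\xi)^2\tfrac{\m(f)^2}{16\pi}|\min f|$, and Lemma~\ref{lem-volOmegaC} to bound the second summand by $\vol(T)+C\,\mathcal H^2(\bdry D)^{5/4}\m(f)^{1/2}$. Adding these two estimates yields exactly the claimed upper bound, with the same constant $C$ as in Lemma~\ref{lemma-h0-estimate}.

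Since every ingredient is already proved in the preceding subsections, there is essentially no obstacle here: the corollary is a bookkeeping step. The only minor points to be careful about are (i) verifying that $\vol_{g_M}(\S_{h_0})=0$ so that the additivity of volume over the decomposition is legitimate --- this follows because $\S_{h_0}$ is a $2$-dimensional set (a level set) inside a $3$-manifold, of vanishing $3$-volume even at a critical value, using the co-area formula and the fact that the set of critical values has measure zero by Sard --- and (ii) noting that the $\mathcal H^2(\bdry D)^{5/4}$ appearing here matches the $\mathcal H^2(\bdry D)^{1/4}$ in Lemma~\ref{lemma-h0-estimate} multiplied by the extra $\mathcal H^2(\bdry D)$ factor from the bound $\V\leq \mathcal H^2(\bdry D)$ used in Lemma~\ref{lem-volOmegaC}. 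With these observations in place the proof is complete.
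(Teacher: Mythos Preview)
Your proposal is correct and follows essentially the same approach as the paper: split into the cases $\m(f)=0$ (trivial via Lemma~\ref{lem-DiamEst}(1)) and $\m(f)>0$ (add the bounds from Lemmas~\ref{lem-volOmega} and~\ref{lem-volOmegaC}), and obtain the lower bound from $d\vol_{g_M}\geq d\vol_{g_T}$. Your extra care in point~(i) is in fact unnecessary, since $\Omega_{h_0}=f^{-1}(-\infty,h_0)$ and $M\setminus\Omega_{h_0}=f^{-1}[h_0,\infty)$ are exactly complementary by definition, so $\vol(M)=\vol_{g_M}(\Omega_{h_0})+\vol_{g_M}(M\setminus\Omega_{h_0})$ holds without any measure-zero discussion.
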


\begin{proof}
If $\m(f)=0$ then by Lemma \ref{lem-DiamEst}, $f=0$ and thus there is nothing to prove. Otherwise, 
adding the inequalities coming from Lemma \ref{lem-volOmega} and Lemma \ref{lem-volOmegaC},
 \begin{align*}
\vol(M)  \leq   &     \frac{(1+\xi)^{3}}{6\sqrt \pi} \frac{\m(f)^{3}}{(16\pi)^{\frac{3}{2}}} + (1+\xi)^2\frac{\m(f)^2}{16\pi} |\min f|\\
& +\vol(T) +  C \mathcal H^2(\bdry D)^{5/4} \m(f)^{\frac{1}{2}}.
\end{align*}
Finally, since $g_M=g_T + df^2$,  
\bee
\vol(T)   \leq \int_{T} \sqrt{1+|\nabla f |^2} d\vol_{g_T} = \vol_{g_M}(M).
\eee
This concludes the proof. 
\end{proof}

\subsection{Flat Convergence: Theorem \ref{thm-Flat2}}\label{ssec-proofsF}
In this subsection we prove flat convergence of sequences $M_i \in T \times \R$ with $M_i \in \M$ arising from $(T,f_i)$
provided $\m(f_i) \to 0$.  We get as a corollary Theorem \ref{thm-Flat2}. 
\bigskip

Throughout this subsection  we denote integral currents of the form $[S]$ as $S$. In this case, we will have that $\mass(S)=\vol(S)$.  Recall that 
from subsection \ref{ssec-VolEst1} for any $ M \in \M$  arising from $(T,f)$ with $\m(f) \neq 0$, either
\begin{enumerate}
\item  $h_0  \in (\min f, \max f]$ and thus 
\begin{align*}
\Omega_{h_0}, M\setminus \Omega_{h_0} \neq \emptyset
\end{align*}
\item  $h_0= \min f$ and thus  
$$\Omega_{h_0}=\emptyset, \,\,\,  M\setminus \Omega_{h_0} \neq \emptyset,$$ 
\end{enumerate}
where $\Omega_{h_0}= f^{-1}(-\infty, 0)$.

\begin{thm}\label{thm-Flat11m}
Let $(M,g_M)$ be a graph torus in $\M$ that arises from $(T,f)$.  Then there exist $h_0 \in \R$, $C,c > 0$ such that for all $L \in ( |h_0| , \infty]$
\begin{align*}
d^{T \times (-L, \infty)}_F( M,  T  \times \{h_0\})  \leq & C \mathcal H^2(\bdry D)^{1/4} \vol(D) \m(f)^{1/2} + \\
&  c ( (1+\xi)^2/16\pi)^{3/2} \min\{|\min f|, L\}  \m(f)^3. 
\end{align*}
where $T  \times \{h_0\}$ has the orientation induced by the orientation of $T \times \R$.

Furthermore, 
\begin{align*}
| \vol(M) - \vol(T) |  \leq   & \frac{(1+\xi)^{3}}{6 \sqrt \pi} \frac{\m(f)^{3}}{(16\pi)^{\frac{3}{2}}} + (1+\xi)^2\frac{\m(f)^2}{16\pi} |\min f|+\\
&  C \mathcal H^2(\bdry D)^{5/4} \m(f)^{\frac{1}{2}}.
\end{align*}
\end{thm}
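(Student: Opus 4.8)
The second (volume) inequality is immediate: it is exactly Corollary \ref{cor:limsup} rewritten, since $\vol(T) \leq \vol(M)$ gives $|\vol(M)-\vol(T)| = \vol(M) - \vol(T)$, and the upper bound there is precisely the right-hand side. So the real content is the flat-distance estimate, and the plan is to produce an explicit filling $M - T\times\{h_0\} = A + \partial B$ inside $T\times(-L,\infty)$ and bound $\mass(A)+\mass(B)$. I would split according to the dichotomy recalled before the statement. When $\m(f)=0$ we have $M = T = T\times\{0\}$ (with $h_0=0$) and there is nothing to prove. When $\m(f)>0$, set $h_0$ as in Definition \ref{def-h0}, and decompose the graph into the two regions $\graph(f|_{\Omega_{h_0}})$ and $\graph(f|_{M\setminus\Omega_{h_0}})$, each viewed as an integral current in $T\times\R$ via $F(x) = (x,f(x))$.

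For the region $M\setminus\Omega_{h_0} = \graph(f|_{f^{-1}[h_0,0]})$: the key point is that this graph and the slab-level piece $T\times\{h_0\}$ (or the relevant sub-slice) cobound a region of small volume, namely the solid region in $T\times\R$ lying between the graph and the horizontal slice at height $h_0$. I would take $B_1$ to be (the current associated to) this solid region; its mass is at most $\int_{T\setminus\Omega_{h_0}} |f - h_0|\, d\vol_{g_T} \le |h_0|\cdot\vol(T)$, and since $|h_0| < C\,\mathcal H^2(\bdry D)^{1/4}\m(f)^{1/2}$ by Lemma \ref{lemma-h0-estimate}, this is controlled by $C\,\mathcal H^2(\bdry D)^{1/4}\vol(D)\,\m(f)^{1/2}$ (recall $\vol(D)=\vol(T)$ from Lemma \ref{lem-DiamEst}(5)). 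Then $\partial B_1$ accounts for the difference between $\graph(f|_{M\setminus\Omega_{h_0}})$ and $T\times\{h_0\}$ up to the "side" contribution along $\Sigma_{h_0}$ and along $\partial D$; the side along $\partial D$ vanishes because $f|_{\partial D}\equiv 0$ forces the relevant boundary faces to be minimal slices that cancel in $T$ (Condition (1)/(2) of $\G$), and the piece along $\Sigma_{h_0}$ is absorbed into the next step.

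For the region $\Omega_{h_0} = \graph(f|_{f^{-1}(-\infty,h_0)})$: here I would simply take $A$ to be this graph current itself (its mass is $\vol_{g_M}(\Omega_{h_0})$, bounded by Lemma \ref{lem-volOmega}), together with a thin cylinder $B_2$ filling between $\Sigma_{h_0}\times\{h_0\}$ and $\graph(f|_{\Sigma_{h_0}})$ so that $\partial(B_1+B_2)$ plus $A$ telescopes correctly to $M - T\times\{h_0\}$; the mass of $B_2$ is at most $|h_0|\,\mathcal H^2(\Sigma_{h_0}) \le |h_0|\,\mathcal V(h_0) \le |h_0|\,(1+\xi)^2\tfrac{\m(f)^2}{16\pi}$, which by the $h_0$-bound is again of order $\m(f)^{5/2}$, hence absorbed. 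Truncating at height $-L$ only changes $\min\{|\min f|,L\}$ in place of $|\min f|$ in the $\vol(\Omega_{h_0})$ term, since below level $-L$ the excised part of the graph lies outside the open set. Collecting: $\mass(A) \le \tfrac{(1+\xi)^3}{6\sqrt\pi}\tfrac{\m(f)^3}{(16\pi)^{3/2}} + (1+\xi)^2\tfrac{\m(f)^2}{16\pi}\min\{|\min f|,L\}$ and $\mass(B_1)+\mass(B_2) \le C\,\mathcal H^2(\bdry D)^{1/4}\vol(D)\,\m(f)^{1/2} + (\text{lower order})$, which matches the claimed bound after renaming constants. The main obstacle I anticipate is the careful bookkeeping of orientations and boundary cancellations in the decomposition $M - T\times\{h_0\} = A+\partial(B_1+B_2)$ — in particular verifying that the cylindrical side-contributions over $\partial D$ and over $\Sigma_{h_0}$ cancel correctly and that the "solid region between the graph and a slice" is genuinely an integral current with the expected boundary; the quantitative volume bounds themselves are then routine given Lemmas \ref{lem-volOmega}, \ref{lem-volOmegaC} and \ref{lemma-h0-estimate}.
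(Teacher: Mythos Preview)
Your approach differs from the paper's and, as written, does not recover the stated bound. Two issues.

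First, the cylinder $B_2$ you describe is degenerate: since $f\equiv h_0$ on $\Sigma_{h_0}$, the sets $\Sigma_{h_0}\times\{h_0\}$ and $\graph(f|_{\Sigma_{h_0}})$ coincide, so $B_2=0$. The actual leftover when you compare $\partial B_1$ to $T\times\{h_0\}$ is not a side cylinder but the \emph{cap} $\Omega_{h_0}\times\{h_0\}$, which has nonzero boundary $\Sigma_{h_0}\times\{h_0\}$ and hence cannot equal $\partial B_2$ for any $B_2$. The fix is to put $-\Omega_{h_0}\times\{h_0\}$ into $A$; then $\partial A=0$ and its extra mass $\vol_{g_T}(\Omega_{h_0})\le c\,\V(h_0)^{3/2}$ is harmless. (There is also no ``side along $\partial D$'' to track: the computation lives in the closed torus $T$, not in the fundamental domain.)

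Second, and more substantively, bounding $\mass(A)$ via Lemma~\ref{lem-volOmega} yields a term of order $(1+\xi)^2\tfrac{\m(f)^2}{16\pi}\min\{|\min f|,L\}$, i.e.\ $\m(f)^2$ times the depth, whereas the theorem asserts $\m(f)^3$ times the depth. So your decomposition proves a weaker inequality than the one stated (adequate for the subsequent convergence results, but not the theorem as written). The paper gets the extra power of $\m(f)$ by taking $A=0$ and filling the deep part with the \emph{solid} region $B_-=\{(x,t):f(x)\le t\le h_0\}$ (with negative orientation), so that $M-T\times\{h_0\}=\partial(B_++B_-)$ exactly, with no $A$-term and no side bookkeeping. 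The mass of $B_-\cap(T\times(-L,\infty))$ is then estimated by horizontal slicing: the slice at height $h$ is $\Omega_h\times\{h\}$, whose volume is at most $c\,\V(h)^{3/2}\le c\,\bigl((1+\xi)^2\m(f)^2/(16\pi)\bigr)^{3/2}$ by the isoperimetric inequality; integrating in $h$ over an interval of length at most $\min\{|\min f|,L\}$ produces the stated $\m(f)^3$ bound. In short: fill with the $4$-dimensional solid between the graph and the slice and bound it slice-by-slice, rather than using the $3$-dimensional graph area directly.
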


\begin{proof}
If $\m(f)=0$ then by Lemma \ref{lem-DiamEst} we have that $f=0$. Thus,  $M= T \times \{0\} \subset T \times \R$ and $h_0=0$ by definition.  Hence, the theorem holds. 

Assume that $\m(f)> 0$. We will choose an integral current $B$ such that
$M -   T \times \{h_0\}= \pr B$.  Thus by definition of flat distance,  
\bee
d^{T \times (-L, \infty)}_F(M,  T  \times \{h_0\})  \leq \mass(B \cap T \times (-L, \infty)).
\eee

Let $h_{0}$ be the number given in Definition \ref{def-h0}
corresponding to the graph torus $M$. If $h_0 \in (\min f, \max f)$ we proceed in the following way.
Define
$$B_+= \{  (x,t) \in T \times \R \, | \, x \in T,\,\, h_0 \leq t \leq f(x) \}$$ 
and 
$$B_- =  \{  (x,t) \in T \times \R \, | \, x \in  T,\,\, f(x) \leq t \leq h_0 \}.$$
Let the integral current given by $B_+$ have positive orientation
and the one given by $B_-$ have negative orientation.
Set $B=B_+ + B_-$. Note that,
$$M - T  \times \{h_0 \}=  \pr B.$$
Then
\bee
(M - T  \times \{h_0\} )  \cap T \times (-L, \infty)    =  \pr B \cap T \times (-L, \infty). 
\eee
and
\bee
d^{T \times (-L, \infty)}_F(M,  T  \times \{h_0\})  \leq \mass(B \cap T \times (-L, \infty)).
\eee

Since $B_+$ and $B_-$ are disjoint we can calculate $\bM(B_+   \cap T \times (-L, \infty) )$ and $\bM(B_-   \cap T \times (-L, \infty) )$ separately.  

By Lemma \ref{lemma-h0-estimate}, $|h_0| \leq C  \mathcal H^2(\bdry D)^{1/4} \m(f)^{1/2}$. 
Since $L > |h_0|$, it follows that $B_+  \cap T \times (-L, \infty) = B_+$. 
Hence, 
 \begin{align}\label{eqB+}
\vol(B_+  \cap T \times (-L, \infty)) = \vol(B_+)  \leq   & \int_{h_0}^{\max f =0} \int_{T} d\vol_{g_T}dt \\
  =   & \int_{h_0}^{\max f =0} \int_{\D} d\vol_{g_{eucl}}dt  \nonumber \\
 \leq  & C  \mathcal H^2(\bdry D)^{1/4} \vol(D) \m(f)^{1/2}. \nonumber
 \end{align}
 \bigskip

To estimate $\vol(B_-  \cap T \times (-L, \infty))$ write 
\be\label{eq-Bvol}
 \vol(B_-    \cap T \times (-L, \infty)) = \int_{\max\{\min f, -L\}}^{h_0} \vol(B_-\cap  T \times \{h\} )\, d\vol_{g_T}.
 \ee
Now $\partial B_-\cap  T \times \{h\} = f^{-1}(h)$.  Then at any regular value $h$ of $f$,
by the isoperimetric inequality and the definition of $\V$,  
$$ \vol(B_-\cap  T \times \{h\} )  \leq  c \V(h)^{3/2}.$$
From the definition of $h_0$, we know that for $h \leq h_0$
$$ \V(h) < \V(h_0)= (1 + \xi)^2 \frac{\m(f)^2}{16\pi}.$$
Therefore,  (\ref{eq-Bvol}) has the upper bound 
 \begin{align}\label{eqB-}
 \vol(B_-  \cap T \times (-L, \infty)) = & \int_{\max\{\min f, -L\}}^{h_0} \vol(B_-\cap  T \times \{h\} )\, d\vol_{g_T} \nonumber
 \\
 \leq & c ( (1+\xi)^2/16\pi)^{3/2}  \min\{|\min f|, L\}  \m(f)^3,
 \end{align}
where we used the fact that $h_0 \leq 0$.  

\bigskip
If  $h_0= \max f$ then 
$$M - T  \times \{h_0 \}=  \pr B_-.$$
If  $h_0= \min f$ then 
$$M - T  \times \{h_0 \}=  \pr B_+.$$ 
In all cases, from (\ref{eqB+}) and (\ref{eqB-}) we get, 
\begin{align*}
d^{T \times (-L, \infty)}_F(M,  T  \times \{h_0\})  \leq  & C \mathcal H^2(\bdry D)^{1/4} \vol(D) \m(f)^{1/2} + \\
&  c ( (1+\xi)^2/16\pi)^{3/2} \min\{|\min f|, L\}  \m(f)^3.
\end{align*}
The volume estimate follows from Corollary \ref{cor:limsup}.
\end{proof}

\begin{thm}\label{thm-Flat22m}
Let $(M_i,g_{M_i})$ be graph tori in $\M$ arising from $(T,f_i)$, $i \in \mathbb N$. 
If $\m(f_i) \to 0$, then there is a subsequence of $M_i$ which we denote in the same way such that 
for all $L >0$
\bee
d^{T \times (-L, \infty)}_F( M_i,  T  \times \{0\}) \to 0
\eee
where $T  \times \{0\}$ has pointing upward orientation.

If $|\min f_i| \m(f_i)^2 \to 0$ then
\bee
d^{T \times \R}_F(M_i,  T  \times \{0\}) \to 0
\eee
and
$$ \vol(M_i)    \to  \vol(T).$$
\end{thm}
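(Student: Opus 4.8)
The plan is to derive everything from the two estimates in Theorem \ref{thm-Flat11m}, which give, for the graph torus $M_i$ arising from $(T,f_i)$, a value $h_0^{(i)}$ with $|h_0^{(i)}| \leq C \mathcal H^2(\bdry D)^{1/4}\m(f_i)^{1/2}$ and bounds on both the flat distance $d_F^{T\times(-L,\infty)}(M_i, T\times\{h_0^{(i)}\})$ and on $|\vol(M_i)-\vol(T)|$. First I would note that since the torus $T$ is \emph{fixed}, the quantities $\mathcal H^2(\bdry D)$ and $\vol(D)=\vol(T)$ are constants independent of $i$, so the flat-distance bound reads
\be
d^{T \times (-L, \infty)}_F( M_i,  T  \times \{h_0^{(i)}\})  \leq  C' \m(f_i)^{1/2} + c'\min\{|\min f_i|,L\}\,\m(f_i)^3,
\ee
with $C',c'$ depending only on $T$ and $\xi$. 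The first term tends to $0$ as $\m(f_i)\to 0$; for the second term, for \emph{fixed} $L$ we bound $\min\{|\min f_i|,L\}\leq L$, so the whole right-hand side goes to $0$. This already handles the first assertion once we replace $T\times\{h_0^{(i)}\}$ by $T\times\{0\}$: since $|h_0^{(i)}|\to 0$, the flat distance between $T\times\{h_0^{(i)}\}$ and $T\times\{0\}$ in $T\times(-L,\infty)$ is at most $\vol(T)\cdot|h_0^{(i)}|\to 0$ (take the box $\{(x,t): h_0^{(i)}\wedge 0\le t\le h_0^{(i)}\vee 0\}$ as a filling), and the triangle inequality for $d_F^{T\times(-L,\infty)}$ finishes it. No subsequence is actually needed for this part, but I would keep the statement as written.

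For the second assertion, assume $|\min f_i|\,\m(f_i)^2\to 0$. Now I take $L=\infty$, i.e. work in $d_F^{T\times\R}$, where $\min\{|\min f_i|,L\}=|\min f_i|$, so the flat-distance bound becomes
\be
d^{T \times \R}_F( M_i,  T  \times \{h_0^{(i)}\})  \leq  C' \m(f_i)^{1/2} + c'\,|\min f_i|\,\m(f_i)^3.
\ee
The first term $\to 0$ since $\m(f_i)\to 0$; the second term is $c'\,(|\min f_i|\m(f_i)^2)\cdot\m(f_i)\to 0$ because the first factor is bounded (indeed $\to 0$) by hypothesis and $\m(f_i)\to 0$. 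Again $|h_0^{(i)}|\to 0$ so $d_F^{T\times\R}(T\times\{h_0^{(i)}\},T\times\{0\})\le \vol(T)|h_0^{(i)}|\to 0$, and the triangle inequality gives $d_F^{T\times\R}(M_i,T\times\{0\})\to 0$. Finally, for the volume convergence I use the second estimate of Theorem \ref{thm-Flat11m}: the right-hand side there is a sum of a term $\lesssim \m(f_i)^3$, a term $(1+\xi)^2\m(f_i)^2|\min f_i|/(16\pi) = $ (const)$\cdot(|\min f_i|\m(f_i)^2)\to 0$, and a term $\lesssim \m(f_i)^{1/2}\to 0$; hence $|\vol(M_i)-\vol(T)|\to 0$.

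The only place requiring a small argument beyond bookkeeping is the estimate $d_F(T\times\{h_0^{(i)}\},T\times\{0\})\le \vol(T)|h_0^{(i)}|$, which I expect to be routine: the two parallel tori cobound the slab $T\times[\min(h_0^{(i)},0),\max(h_0^{(i)},0)]$ as integral currents (with the induced orientations, and $\partial$ of the slab being exactly the difference of the two slices, up to sign), and the mass of that slab is $\vol(T)\,|h_0^{(i)}|$, so by definition of the flat norm $d_F \le \mass(\text{slab})$; if $h_0^{(i)}=0$ there is nothing to prove. The genuine content of the theorem is entirely contained in Theorem \ref{thm-Flat11m}, so the main (and essentially only) obstacle is simply organizing the limits so that the \emph{product} $|\min f_i|\,\m(f_i)^2$, rather than $|\min f_i|$ alone, is what appears — which is exactly why the hypothesis is stated in that combined form. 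Since the construction of $B$, $B_\pm$ in Theorem \ref{thm-Flat11m} already yields a single global filling current in $T\times\R$, no diagonal/subsequence subtlety arises for $L=\infty$.
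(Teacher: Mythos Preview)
Your proposal is correct and follows essentially the same route as the paper: apply Theorem \ref{thm-Flat11m}, use the triangle inequality to pass from $T\times\{h_0^{(i)}\}$ to $T\times\{0\}$ via the slab filling of mass $\vol(T)\,|h_0^{(i)}|$, bound $\min\{|\min f_i|,L\}$ by $L$ for the first assertion and by $|\min f_i|$ for the second, and read off volume convergence from the second estimate of Theorem \ref{thm-Flat11m} (equivalently, Corollary \ref{cor:limsup}). The only small point you leave implicit is that Theorem \ref{thm-Flat11m} requires $L>|h_0^{(i)}|$; since $|h_0^{(i)}|\to 0$, for any fixed $L>0$ this holds for all large $i$, which is exactly how the paper handles it.
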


\begin{proof}
If $\m(f_i)=0$ then $M_i= T \times \{0\} \subset T \times \R$. Hence, if  there is an infinite number of $i \in \mathbb N$ such that $\m(f_i)=0$
the theorem holds.  Otherwise, assume $\m(f_i) >0$ for all $i$ and let $h_{0i}$ be the value given in Definition \ref{def-h0}
corresponding to the graph torus $M_i$.  By the triangle inequality, 
\begin{align*}
d^{T \times (-L, \infty)}_F( M_i,  T  \times \{0\}) & \leq 
d^{T \times (-L, \infty)}_F( M_i,  T  \times \{h_{0i}\})  \\
&  +  d^{T \times (-L, \infty)}_F( T  \times \{h_{0i}\},  T  \times \{0\}).
\end{align*}
Since $\m(f_i) \to 0$, $ |h_{0i}| \leq C  \mathcal H^2(\bdry D)^{1/4}\m(f_i)^{1/2} \to 0$.  Thus, we can assume that $L > |h_{0i}|$ and apply
Theorem \ref{thm-Flat11m}, 
\begin{align*}
d^{T \times (-L, \infty)}_F( M_i,  T  \times \{h_{0i}\})  \leq &
 C \mathcal H^2(\bdry D)^{1/4} \vol(D) \m(f_i)^{1/2} \\
+ &  c ( (1+\xi)^2/16\pi)^{3/2} \min\{|\min f_i|, L\}  \m(f_i)^3 \\
\to & 0. \nonumber 
\end{align*}
Furthermore, 
\begin{align*}
d^{T \times (-L, \infty)}_F( T  \times \{h_{0i}\},  T  \times \{0\}) \leq & \vol (T \times [h_{0i},  0]  ) \\
\leq   &  |h_{0i}|   \vol(T) \to 0. 
\end{align*}

\bigskip 
If $|\min f_i| \m(f_i)^2 \to 0$ then from Corollary \ref{cor:limsup} we get $\vol(M_i) \to \vol(T)$. 
To calculate $d^{T \times \R}_F( M_i,  T  \times \{0\})$ we have to modify the calculations of the previous paragraph. 
Here $L = \infty$,  so we only have to ensure that 
$\min\{|\min f_i|, L\} \m(f_i)^3 = |\min f_i| \m(f_i)^3  \to 0$. This limit follows from 
$\m(f_i) \to 0$ and  $|\min f_i| \m(f_i)^2 \to 0$. 
\end{proof}

\begin{remark}\label{rmrk-xibigger}
Notice that under the assumptions of Theorem \ref{thm-Flat22m} if we allow $\xi \geq 1$ to increase then the flat distance between the graph tori and the flat torus increases but we still get 
flat convergence. 
\end{remark}

\begin{proof}[Proof of Theorem \ref{thm-Flat2}]
This follows from the fact that $R(f_i)> -\varepsilon$ implies that $\m(f_i) <  \varepsilon \vol(T)$.  See Remark \ref{rmrk-m/R}.  Also $|\min f_i| \m(f_i)^2 \leq D_0^2\m(f_i)^2 \to 0$. Then the conclusion follows from the previous theorem. 
\end{proof}

\section{Intrinsic Flat Result: Theorem \ref{thm-IF2}}

In this section we prove Theorem \ref{thm-IF2} by applying Theorem \ref{thm:aps}. 

\begin{thm}\label{th:mmm}
Let $M_i \in \Md$, $i \in \mathbb N$, be a sequence of tori arising from $(T_i,f_i)$. Assume that 
$$
\m(f_i) \to 0.
$$
Then either $\vol(M_i)  \to 0$ or there is a subsequence $\{i_k\}$ and a flat torus $(T_\infty, g_{T_\infty})$ such that 
\begin{align}\label{equ:assume}
d_{\mathcal{F}}  (( M_{i_k},d_{M_{i_k}},  [M_{i_k}]), (  T_\infty ,d_{T_\infty}, [T_\infty])) \to 0.
\end{align}
Moreover, 
\be\label{eq-massmmm}
\mass( [M_{i_k}]) \to \mass( T_\infty).
\ee
If $M_i \in \Mda$ then $\vol(M_i)  \to 0$ does not occur.
\end{thm}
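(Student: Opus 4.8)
The plan is to reduce the problem to the fixed underlying manifold given by the limit flat torus and then invoke Theorem~\ref{thm:aps} (Allen--Perales--Sormani), inserting a harmless rescaling to produce the metric domination that theorem requires.

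First I would settle the dichotomy. By Corollary~\ref{cor:limsup}, together with the uniform bounds $|\min f_i|\le\diam(M_i)\le D_0$ and $\mathcal H^2(\bdry D_i)\le 6(2D_0)^2$ from Lemma~\ref{lem-DiamEst} and the hypothesis $\m(f_i)\to 0$, one gets $0\le\vol(M_i)-\vol(T_i)\to 0$. Hence if $\vol(M_i)\not\to 0$ there is a subsequence, still denoted $M_i$, with $\vol(M_i)\ge\delta>0$, and then $\vol(T_i)\ge\delta/2$ for $i$ large. Since also $\diam(T_i)\le\diam(M_i)\le D_0$, Lemma~\ref{thm-Bbilip} applies after a further passage to a subsequence: the generator matrices satisfy $A_i\to A_\infty$ with $A_\infty$ positive definite, so the linear maps $L_i:=A_iA_\infty^{-1}\to\mathrm{id}$ descend to orientation-preserving diffeomorphisms $\bar L_i:T_\infty\to T_i$ with $\tilde g_i:=\bar L_i^*g_{T_i}\to g_{T_\infty}$.

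Next I would transplant the graph tori onto $T_\infty$. Set $\hat f_i:=f_i\circ\bar L_i\in C^4(T_\infty)$ and $\hat g_i:=\bar L_i^*g_{M_i}=\tilde g_i+d\hat f_i\otimes d\hat f_i$, so $\bar L_i$ is a current-preserving isometry identifying $(M_i,g_{M_i})$ with $(T_\infty,\hat g_i)$; in particular $d_{\mathcal F}(M_i,T_\infty)=d_{\mathcal F}((T_\infty,\hat g_i),(T_\infty,g_{T_\infty}))$. Let $\lambda_i:=\max\{1,\|L_i^{-1}\|_{\mathrm{op}}\}\to 1$; then $\lambda_i^2\tilde g_i\ge g_{T_\infty}$ and hence, since $\hat g_i\ge\tilde g_i$, the rescaled metric $g_i':=\lambda_i^2\hat g_i$ satisfies $g_i'\ge g_{T_\infty}$ pointwise on $T_\infty$. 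Moreover $\diam(T_\infty,g_i')=\lambda_i\diam(M_i)$ is uniformly bounded, and $\vol_{g_i'}(T_\infty)=\lambda_i^3\vol(M_i)\to\vol(T_\infty)=\vol_{g_{T_\infty}}(T_\infty)$, using $\vol(M_i)-\vol(T_i)\to 0$ and $\vol(T_i)=\det A_i\to\det A_\infty=\vol(T_\infty)$. Theorem~\ref{thm:aps}, applied on the fixed manifold $T_\infty$ with metrics $g_i'$ and $g_{T_\infty}$, then yields $d_{\mathcal F}((T_\infty,g_i'),(T_\infty,g_{T_\infty}))\to 0$. On the other hand the identity map $(T_\infty,\hat g_i)\to(T_\infty,g_i')$ is $\lambda_i$-biLipschitz, so Lemma~\ref{thm:biLiptoIFdis}, the uniform diameter and mass bounds, and $\mathbf M(\bdry[T_\infty])=0$ give $d_{\mathcal F}((T_\infty,\hat g_i),(T_\infty,g_i'))\to 0$ because $c(\lambda_i,3)\to 0$. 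The triangle inequality now proves \eqref{equ:assume}, and $\mathbf M([M_i])=\vol(M_i)\to\vol(T_\infty)=\mathbf M(T_\infty)$ proves \eqref{eq-massmmm}.

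Finally, if $M_i\in\Mda$ then Lemma~\ref{lem-DiamEst}(7) gives $\inj(T_i)\ge A_0/(2D_0)$, which forces $\vol(T_i)\ge\nu>0$ (remark after Lemma~\ref{thm-Bbilip}), and since $\vol(M_i)\ge\vol(T_i)$ by Corollary~\ref{cor:limsup} the case $\vol(M_i)\to 0$ is excluded. The one genuine obstacle is the point flagged in the introduction: Theorem~\ref{thm:aps} needs a fixed ambient manifold whereas our tori $T_i$ vary, and the transplanted flat metrics $\tilde g_i$ only approximate $g_{T_\infty}$ rather than dominating it; both difficulties are absorbed by the transplantation through $\bar L_i$ and the rescaling by $\lambda_i^2\to 1$ controlled via Lemma~\ref{thm:biLiptoIFdis}.
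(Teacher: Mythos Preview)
Your proof is correct and follows the same overall strategy as the paper: pass to a subsequence so that the underlying flat tori $T_i$ converge (Lemma~\ref{thm-Bbilip}), transplant everything onto the fixed limit torus $T_\infty$, and close with a triangle inequality in which one leg is handled by Theorem~\ref{thm:aps} and the other by the biLipschitz Lemma~\ref{thm:biLiptoIFdis}.

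The one difference worth noting is the choice of intermediate metric on $T_\infty$. You pull back the full graph metric, obtaining $\hat g_i=\tilde g_i+d\hat f_i\otimes d\hat f_i$; this is isometric to $(M_i,g_{M_i})$ but only dominates $\tilde g_i$, not $g_{T_\infty}$, which forces your rescaling by $\lambda_i^2$ and the extra biLipschitz comparison between $\hat g_i$ and $g_i'$. The paper instead sets $\bar g_{i_k}=g_{T_\infty}+d\hat f_{i_k}\otimes d\hat f_{i_k}$, replacing the pulled-back flat metric $\tilde g_{i_k}$ by the limit one $g_{T_\infty}$. This buys the domination $\bar g_{i_k}\ge g_{T_\infty}$ for free, so Theorem~\ref{thm:aps} applies directly; the price is that $\phi_{i_k}:(T_\infty,\bar g_{i_k})\to(M_{i_k},g_{M_{i_k}})$ is no longer an isometry but only biLipschitz with constants tending to $1$, and the biLipschitz lemma is invoked there instead. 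In effect the two arguments place Lemma~\ref{thm:biLiptoIFdis} on opposite sides of the same triangle; the paper's version is marginally cleaner since it avoids the auxiliary scaling, but yours is equally valid.
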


\begin{proof}
If $M_i \in \Mda$ then by Lemma \ref{lem-DiamEst} we have $\inj(T_i) \geq \tfrac{A_0} {2D_0}$. Since the tori are flat, $\vol(T_i) \geq \omega_3 (\tfrac{A_0} {2D_0})^3$.
So from now on suppose that $\vol(M_i)  \nrightarrow 0$ then there is a subsequence which we denote in the same way such that 
$\vol(M_i) \geq 2\nu$. By Lemma \ref{lem-DiamEst},   Corollary \ref{cor:limsup} and the hypothesis $\m(f_i) \to 0$ we get
  $\vol(T_i) \geq \nu$ for large $i$.  Then by Lemma \ref{thm-Bbilip} there is a subsequence $\{T_{i_k}\}$ that converges to a flat torus $T_\infty$.  
Because of this we have 
$\vol(T_{i_k})  \to \vol(T_\infty)$. Recalling that  $|\min f|  \leq \diam(M_i)\leq D_0$ and Corollary \ref{cor:limsup} once more we find
$$ \lim_{k \to \infty} \vol(M_{i_k})= \vol(T_\infty).$$
Thus, (\ref{eq-massmmm}) follows since $\mass=\vol$.

 Let  $\phi_{i_k}: T_\infty \rightarrow T_{i_k}$ be the diffeomorphisms given in Lemma \ref{thm-Bbilip}. 
Define Riemannian metrics $\bar g_{i_k}$ in $T_\infty$ as follows
\begin{align*}
\bar g_{i_k}=g_{T_\infty} + d(f_{i_k}\circ \phi_{i_k})\otimes d(f_{i_k}\circ \phi_{i_k}). 
\end{align*}
We claim that 
\begin{align*}
\lim_{k \to \infty}d_{\mathcal{F}}((M_{i_k},  g_{M_{i_k}}), (T_\infty, \bar g_{i_k})) =0
\end{align*}
and that 
\begin{align*}
 \lim_{k \to \infty}  d_{\mathcal F}((T_\infty, \bar g_{i_k}), (T_\infty, g_{T_\infty}) ) = 0. 
\end{align*}
By the triangle inequality, these two claims imply (\ref{equ:assume}).

By construction,  $\phi_{i_k}:  (T_\infty, \bar g_{i_k})  \to (M_{i_k} , g_{i_k})$
are biLipschitz maps with Lipschitz constants $\lambda_k$ converging to $1$.  Furthermore, $\phi^{-1}_{j_k \sharp} [M_{j_k}]= [T_\infty, \bar g_{j_k}]$. 
Thus, we can apply Lemma \ref{thm:biLiptoIFdis}:
\begin{align*}
d_{\mathcal F} ( (M_{i_k},d_{i_k},[M_{i_k}]),  ( \set(\phi^{-1}_{i_k \sharp} [M_{i_k}] ), d_{\bar g_{i_k}},  \phi^{-1}_{i_k\sharp} [M_{i_k}] ) \leq   \\
  \leq  c(\lambda_k, n) \max \{\diam(M_{i_k}), \diam(\set ( \phi^{-1}_{i_k \sharp} [M_{i_k}]) ) \} \mass([M_{i_k}]) 
\end{align*}
where  $c(\lambda_k,n)= \tfrac{1}{2} (n+1)\lambda_k^{n-1}(\lambda_k-1)$ 
and $n$ equals the dimension of $M_i$. Since $\diam(M_i) \leq D_0$, $\mass(M_i)$ is uniformly bounded and $\lambda_k$ converges to $1$ then  $$d_{\mathcal F} ( (M_{i_k},d_{i_k},[M_{i_k}]),  ( \set(\phi^{-1}_{i_k \sharp} [M_{i_k}] ), d_{\bar g_{i_k}},  \phi^{-1}_{i_k\sharp} [M_{i_k}] )   \to 0.$$
Thus, the first claim holds.

The second part of the claim follows applying Theorem \ref{thm:aps}.  
Note that by definition of $\bar g_i$, $\bar g_i \geq g_{T_\infty}$. 
Since $\phi_{i_k}:  (T_\infty, \bar g_{i_k})  \to (M_{i_k} , g_{i_k})$
are biLipschitz maps with Lipschitz constants $\lambda_k$ converging to $1$
and all $M_i$ have a uniform upper diameter bound, we can assume that $\diam(T_\infty, \bar g_{i_k})$ is 
uniformly bounded above.  Since $\det( d \phi_{i_k}) \to 1$ and $\vol(M_{i_k}) \to \vol(T_\infty)$,
\begin{equation*}
\vol(T_\infty, \bar g_{i_k})=   \int_{T_\infty}    \sqrt{1 +  |\grad(f_{i_k}  \circ \phi_{i_k})|^2}  d\vol_{g_{T_\infty}} \to \vol(T_\infty).
\end{equation*}
Thus we can apply Theorem \ref{thm:aps}. 
\end{proof}

\begin{proof}[Proof of Theorem \ref{thm-IF2}]
This follows applying Theorem \ref{th:mmm}. For that, we see that $R(f_i)> -1/i$ implies that $\m(f_i) <  \vol(T_i)/i \leq (2D_0)^3/i \to 0$.  See Remark \ref{rmrk-m/R} and Lemma \ref{lem-DiamEst}. 
 \end{proof}

\small{
\bibliographystyle{amsalpha}
\bibliography{graphTori}
}

\end{document}